\title
{Magnetic ground states and the conformal class of a surface}
\author{Bruno Colbois\footnote{Universit\'e de Neuch\^atel\,, Institut de Math\'emathiques\,, Rue Emile-Argand 11\,, CH2000 Neuch\^atel\,, Switzerland. Email:\, bruno.colbois@unine.ch}, Luigi Provenzano\footnote{Sapienza Universit\`a di Roma\,, Dipartimento di Scienze di Base e Applicate per l'Ingegneria\,, Via Scarpa 16\,, 00161 Roma\,, Italy. Email:\, luigi.provenzano@uniroma1.it}, and Alessandro Savo\footnote{Sapienza Universit\`a di Roma\,, Dipartimento di Scienze di Base e Applicate per l'Ingegneria\,, Via Scarpa 16\,, 00161 Roma\,, Italy. Email:\, alessandro.savo@uniroma1.it}}
\date{\today}
\newtheorem{defi}{Definition} 
\newtheorem{thm}[defi]{Theorem}
\newtheorem{rem}[defi]{Remark}
 \newtheorem{prop}[defi]{Proposition}
\newtheorem{lemme}[defi]{Lemma}
\newtheorem{cor}[defi]{Corollary}
\newcommand{\twosystem}[2]{\left\{\begin{aligned} &#1\\ &#2\end{aligned}\right.}
\newcommand{\nero}{\smallskip$\bullet\quad$\rm}
\newcommand{\due}[2]{#1&#2\\}
\newcommand{\matrice}{\begin{pmatrix}}
\newcommand{\ok}{\end{pmatrix}}
\newcommand{\twomatrix}[4]{\matrice\due {#1}{#2}\due{#3}{#4}\ok}
\newcommand{\derive}[2]{\dfrac{\bd #1}{\bd#2}}
\newcommand{\Tre}[3]{\begin{pmatrix} #1\\#2\\#3\end{pmatrix}}
\newcommand{\threearray}[3]{\begin{aligned}&{#1}\\&{#2}\\&{#3}\end{aligned}}
\newcommand{\scal}[2]{\langle{#1},{#2}\rangle}
\newcommand{\abs}[1]{\lvert{#1}\rvert}
\newcommand{\norm}[1]{\lVert{#1}\rVert}
\newcommand{\reals}{{\mathbb R}}
\newcommand{\real}[1]{{\mathbb R}^{#1}}
\newcommand{\bd}{\partial}
\newcommand{\C}{\mathbb C}
\begin{document}

\maketitle

\noindent
{\bf Abstract.} On a closed, orientable Riemannian surface $\Sigma_g$ of arbitrary genus $g\geq 1$ and Riemannian metric $h$ we study the magnetic Laplacian with magnetic potential given by a harmonic $1$-form $A$. Its lowest eigenvalue (magnetic ground state energy) is positive, unless $A$ represents an integral cohomology class. We isolate a countable set of ground state energies which we call {\it ground state spectrum} of the metric $h$. The main result of the paper is to show that the ground state spectrum  determines the volume and the conformal class of the metric $h$. In particular, hyperbolic metrics are distinguished by their ground state spectrum.

We also compute the magnetic spectrum of flat tori and introduce some magnetic spectral invariants of $(\Sigma_g,h)$ which are conformal by definition and involve the geometry of what we call the Jacobian torus of $(\Sigma_g,h)$ (in Algebraic Geometry, the Jacobian variety of a Riemann surface).

\vspace{11pt}

\noindent
{\bf Keywords:} Riemann surface, magnetic Laplacian, ground state energy 

\vspace{6pt}
\noindent
{\bf 2020 Mathematics Subject Classification:} 58J50, 58J53, 35P15, 53C18

\section{Introduction}

In this paper we will study the ground state energy of the magnetic Laplacian with zero magnetic field on $2$-dimensional compact surfaces with empty boundary. In this short introduction we define the magnetic Laplacian and introduce some of its fundamental properties. The main results will be discussed in Section \ref{sec:results}. 

 Let $(M,h)$ be a compact Riemannian manifold without boundary; note that $h$ denotes the metric (the notation $g$ is reserved for the genus of a surface, considered below). 
 
\smallskip
 
 A {\it magnetic potential} is by definition a smooth real $1$-form $A$ on $M$. It defines a {\it magnetic gradient}  $\nabla^A$ acting on $C^{\infty}(M,\C)$  by the formula
 $$
 \nabla^Au=\nabla u-iuA^{\sharp}
 $$
 where $\nabla$ denotes the gradient in the metric $h$ and $A^{\sharp}$ is the dual vector field of $A$ ({\it vector potential}). Dually, one can define the {\it magnetic differential}
 of $u$ as the complex $1$-form
$
d^Au=du-iuA.
$

The \emph{magnetic Laplacian} $\Delta_A$ is given by $(\nabla^A)^*\nabla^A$ and may be written
\begin{equation}\label{ml}
\Delta_Au=\Delta u+2i\langle A,du\rangle_h+(\vert A\vert_h^2-i\delta A)u
\end{equation}
where $\delta=d^{\star}$ denotes the codifferential and $\Delta$ the usual Laplacian; our sign convention is that, on $\real n$: $\Delta u=-\sum_k\frac{\bd^2u}{\bd x_k^2}$. Equivalently, $\Delta_A=\delta^Ad^A$, where $\delta^A$ is the magnetic codifferential, acting on the $1$-form $\omega$ by $\delta^A\omega=\delta\omega+i\langle{A},{\omega}\rangle_h$.

\smallskip

 Since $\Delta_A$ defines a compact self-adjoint operator with dense domain in $L^2(M,dv_h)$, we have a non-negative, discrete spectrum 
 $$
0\leq \lambda_1(M,h,A)\leq  \lambda_2(M,h,A)\leq\dots\leq \lambda_k(M,h,A)\leq\dots
 $$ 
 diverging to $+\infty$.
 The \emph{magnetic ground state energy} is the first eigenvalue $\lambda_1(M,h,A)$ of $\Delta_A$; it has the usual variational characterization
\begin{equation}\label{minmax}
 \lambda_1(M,h,A)=\min_{0\ne u\in C^{\infty}(M,\C)}R_A(u)
 \end{equation}
 where $R_A$ is the Rayleigh quotient associated with $\Delta_A$:
 $$
 R_A(u)=\frac{\int_M\vert \nabla^Au\vert_h^2 dv_h}{\int_M\vert u\vert^2dv_h}.
 $$

\medskip

One sees immediately that $\Delta_A$ reduces to the usual Laplace-Beltrami operator when $A=0$, giving rise to the same spectrum:
$$
\lambda_k(M,h,0)= \lambda_k(M,h)
$$
where on the right we have the $k$-th eigenvalue of the Laplace-Beltrami operator. Note that we number the eigenvalues so that $\lambda_1(M,h)=0$, hence, in this paper,  the lowest positive eigenvalue of the Laplace-Beltrami operator is $\lambda_2(M,h)$. 

\smallskip

A fundamental feature of the magnetic Laplacian is {\it gauge invariance}, expressed in the following lemma. Recall that the flux of a $1$-form $A$ on the closed curve $c$ is defined as $\frac{1}{2\pi}\oint_cA$. 

\nero We say that the closed $1$-form $A$ is {\it integral} if the flux of $A$ around every loop in $M$ is an integer. 

\smallskip

Note that exact $1$-forms have zero flux around any loop, hence they are trivially integral. 

\begin{lemme} Assume that $A$ and $A'$ are closed potential $1$-forms such that $A-A'$ is integral. Then 
$\lambda_k(M,h,A)=\lambda_k(M,h,A')$ for all $k\in\mathbb N$.
\end{lemme}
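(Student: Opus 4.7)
The plan is to implement the standard gauge-transformation trick: find a unitary multiplication operator $u \mapsto Uu$ on $L^2(M,\mathbb C)$ that intertwines $\nabla^{A}$ and $\nabla^{A'}$, and then invoke the min-max characterization \eqref{minmax}.

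First I would construct the gauge function $U$. Set $B := A-A'$, which is closed and integral by hypothesis. Since $B$ is closed, on the universal cover $\pi:\tilde M\to M$ (which is simply connected) its pullback $\pi^*B$ is exact, so there is a smooth real function $\tilde\phi$ on $\tilde M$ with $d\tilde\phi=\pi^*B$. Under the deck transformation associated to a loop $c$ in $M$, the function $\tilde\phi$ shifts by $\oint_c B$; by the integrality of $B$ this shift lies in $2\pi\mathbb Z$. Consequently $e^{i\tilde\phi}$ is deck-invariant and descends to a smooth map $U:M\to S^1\subset\mathbb C$ satisfying $|U|\equiv 1$ and $dU = iU\,B$.

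Next I would verify the intertwining relation $\nabla^{A}(Uu)=U\,\nabla^{A'}u$ for every $u\in C^\infty(M,\mathbb C)$. A direct computation using $\nabla U = iU\,B^{\sharp}$ gives
$$
\nabla^{A}(Uu)=\nabla(Uu)-iUu\,A^{\sharp}=U\nabla u+iUu\,B^{\sharp}-iUu\,A^{\sharp}=U\bigl(\nabla u-iu\,A'^{\sharp}\bigr)=U\,\nabla^{A'}u.
$$
Since $|U|=1$, taking pointwise norms yields $|\nabla^{A}(Uu)|_h=|\nabla^{A'}u|_h$ and $|Uu|=|u|$, so the Rayleigh quotients match: $R_{A}(Uu)=R_{A'}(u)$.

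Finally, multiplication by $U$ is a bijection of $C^\infty(M,\mathbb C)$ (with inverse multiplication by $\bar U$), and it extends to a unitary isomorphism of $L^2(M,dv_h)$ which preserves the form domain of $\Delta_A$ and $\Delta_{A'}$. Applying the min-max formula for the $k$-th eigenvalue, the equality of Rayleigh quotients on corresponding $k$-dimensional subspaces yields $\lambda_k(M,h,A)=\lambda_k(M,h,A')$ for every $k$. The only step with any content is the construction of $U$: once the integrality of $B$ is used to make $e^{i\tilde\phi}$ descend to $M$, the rest is a formal manipulation expressing the gauge covariance of $\Delta_A$.
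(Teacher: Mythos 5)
Your proposal is correct and follows essentially the same route as the paper: the gauge function $U=e^{i\phi}$ with $d\phi=A-A'$ (well-defined modulo $2\pi\mathbb Z$ precisely because $A-A'$ is integral), giving a unitary multiplication operator that intertwines the two magnetic Laplacians, hence equality of all eigenvalues. Your universal-cover phrasing is just a careful rendering of the paper's "multi-valued $\phi(x)=\int_{x_0}^x(A-A')$" construction, and the min-max step matches the paper's unitary equivalence $\Delta_A e^{i\phi}=e^{i\phi}\Delta_{A'}$.
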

In fact, if $A-A'$ is exact: $A-A'=d\phi$, the operators $\Delta_A$ and $\Delta_A'$ are unitarily equivalent because of the formula
$$
\Delta_Ae^{i\phi}=e^{i\phi}\Delta_{A'}
$$
and then have the same spectrum. If more generally $A-A'$ is integral the above holds with 
$\phi(x)=\int_{x_0}^x(A-A')\doteq\int_{\gamma}(A-A')$, where $x_0$ is any fixed point and the integral is taken along any curve $\gamma$ joining $x_0$ and $x$. Note that $\phi(x)$ is multi-valued: if $\gamma$ and $\tilde\gamma$ are curves joining $x_0$ and $x$  then $\int_{\gamma}(A-A')-\int_{\tilde\gamma}(A-A')\in 2\pi\mathbb Z$ by the integrality assumption, hence the function $e^{i\phi}$ is well-defined and smooth. 

\smallskip

Our main attention is given to $\lambda_1(M,h,A)$, the {\it ground state energy} of $(M,h)$ for the potential $A$: it is important to know when it is strictly positive. The following fact is due to Shigekawa (for a proof, see \cite[Appendix 5]{CEIS2022}).

\begin{thm}\label{shi} One has $\lambda_1(M,h,A)=0$ if and only if $A$ is closed and integral (that is, A has integral flux around every loop in $M$). 
\end{thm}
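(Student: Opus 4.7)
The proof splits into an easy and a substantive direction.

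\textbf{The easy direction.} If $A$ is closed and integral, then taking $A'=0$ in the gauge invariance lemma is legitimate (the trivial potential is closed and $A-0=A$ is integral by assumption), so $\lambda_k(M,h,A)=\lambda_k(M,h,0)=\lambda_k(M,h)$ for every $k$. In particular $\lambda_1(M,h,A)=\lambda_1(M,h)=0$.

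\textbf{The hard direction.} Assume $\lambda_1(M,h,A)=0$ and pick a ground state $u\in C^\infty(M,\C)$ with $\Delta_A u=0$. Multiplying by $\bar u$ and integrating, or using the variational characterisation \eqref{minmax}, one gets $\int_M|\nabla^Au|_h^2\,dv_h=0$, hence $\nabla^A u\equiv 0$ pointwise; equivalently $du=iu A$. The first step is to show that $u$ has no zeros. For this I would compute $d|u|^2=\bar u\,du+u\,d\bar u=iu\bar u A-iu\bar u A=0$, so $|u|$ is a (nonzero) constant and $u$ never vanishes.

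Once $u$ is nowhere zero, $u/|u|$ is a smooth map $M\to S^1\subset\C$. Locally on any simply connected open set $U$ one can lift it to a smooth real phase $\phi_U$ with $u=|u|e^{i\phi_U}$; substituting into $du=iuA$ and using that $|u|$ is constant yields $A=d\phi_U$ on $U$. This already shows $A$ is closed. To obtain integrality, for any loop $\gamma\colon[0,1]\to M$ the composition $e^{i\phi}\circ\gamma$ is a well-defined smooth loop in $S^1$ and
\begin{equation*}
\oint_\gamma A=\int_0^1\frac{d}{dt}\phi(\gamma(t))\,dt=2\pi\, n(\gamma),
\end{equation*}
where $n(\gamma)\in\Z$ is the winding number of $e^{i\phi}\circ\gamma$ around the origin. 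Hence $\frac{1}{2\pi}\oint_\gamma A\in\Z$ for every loop $\gamma$, which is exactly the integrality condition.

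The only genuine obstacle is the nonvanishing of $u$; without it, one cannot pass from the differential identity $du=iuA$ to a well-defined global phase. I expect that to be the delicate point, and it is handled cleanly by the computation that $|u|$ is constant.
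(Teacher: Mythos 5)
Your proof is correct and complete: zero ground state energy forces $d^Au=0$, the computation $d|u|^2=0$ (using that $A$ is real) gives a nowhere-vanishing ground state, and then $A$ is locally the differential of the phase of $u$, so each flux $\frac{1}{2\pi}\oint_\gamma A$ is the winding number of $(u/|u|)\circ\gamma$ in $S^1$, hence an integer. The paper itself gives no proof of Theorem \ref{shi} — it attributes the result to Shigekawa and refers to \cite[Appendix 5]{CEIS2022} — and your argument is precisely the standard one used there; the only cosmetic point is that $\phi$ is only locally defined, so the flux identity should be read as integrating a continuous lift of the phase along $\gamma$ (equivalently, pulling back the angle form of $S^1$ by $u/|u|$), which is what you implicitly do.
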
 
In this paper we will consider {\it closed} potentials $A$, so that the magnetic field $B$ vanishes: $B\doteq dA=0$. According to Theorem \ref{shi}, the ground state energy could be strictly positive, precisely when the potential $A$ has non-integral flux around some loop in $M$.
In the literature, this corresponds to a phenomenon in quantum mechanics called {\it Aharonov-Bohm effect}, observed in experimental physics: hence the condition $dA=0$ is far from being trivial. This condition is very interesting also from the spectral geometric point of view, as we shall see very soon.

 \smallskip
 
 Let ${\rm Har}(M,h)$ be the vector space of $h$-harmonic $1$-forms on $(M,h)$; by the Hodge-de Rham theorem, it is isomorphic to the first de Rham cohomology space $H^1(M,\mathbb R)$. Observe that, if $A$ is closed, we have by gauge invariance that $\lambda_1(M,h,A)=\lambda_1(M,h,\tilde A)$, where $\tilde A$ is the unique harmonic representative of $A$. Hence, we can restrict our attention to {\it harmonic} potential forms, and so we can view the ground state energy as a function
$$
\lambda_1(M,h,\cdot):{\rm Har}(M,h)\to [0,\infty)
$$
taking $A$ to $\lambda_1(M,h,A)$. A simple but important result of this paper is the computation of the Hessian of this function at $A=0$ (see Theorem \ref{thm:asymp}).

\smallskip

Next,  let $\mathcal L^*$ be the lattice in ${\rm Har}(M,h)$ formed by the integral $1$-forms; as an abelian group, it is isomorphic to $H^1(M,\mathbb Z)$. Note that Theorem \ref{shi} says precisely  that 

\nero $\lambda_1(M,h,A)=0$ if and only if $A\in\mathcal L^{\star}$.

\smallskip

By gauge invariance, we conclude that the ground state energy is really a function on the quotient:
$$
\lambda_1(M,h,\cdot):{\rm Har}(M,h)/{\mathcal L^{\star}}\to [0,\infty).
$$

\nero Suitably renormalized, the quotient torus ${\rm Har}(M,h)/{\mathcal L^{\star}}$ will be called the {\it Jacobian torus} of $(M,h)$ (see Subsection \ref{sub:spec_inv}).
When $M$ is a Riemannian surface, it is a conformal invariant, known in Algebraic Geometry  as the Jacobian variety of $(M,h)$.

\smallskip

We now recall a result from \cite{CEIS2022} which partly inspired the present paper. We endow ${\rm Har}(M,h)$ with the $L^2$-inner product of forms, and define the distance of $A\in {\rm Har}(M,h)$ to the lattice $\mathcal L^*$ as follows:
 \begin{equation}\label{dist_har}
 d_{h}(A,\mathcal L^*)^2=\min\{\Vert \omega-A \Vert^2_{L^2(h)};\ \omega \in \mathcal L^*\}.
 \end{equation}
Then we have the upper bound \cite[Theorem 3]{CEIS2022}:
 \begin{equation} \label{ineg 1}
 \lambda_1(M,h,A)\le \frac{1}{{\rm vol}(M,h)}  d_h(A,\mathcal L^*)^2.
 \end{equation}
It also follows from \cite[Theorem 5]{CEIS2022} that when $M$ is a two-dimensional torus, we have equality in (\ref{ineg 1}) if and only if $M$ is flat. 

\subsection{Preliminary comments}

The spectrum of the magnetic Laplacian has received great attention lately, also from the 
purely mathematical point of view, on which we focus here. In the paper \cite{CEIS2022} a series of upper bounds have been proved, some of them generalizing the results and the methods employed in the classical non-magnetic case, i.e. the Laplace-Beltrami operator. For a general study of the magnetic Laplacian see for example \cite{FoHe2010}. A case which is widely studied is that of plane domains endowed with a constant magnetic field, see for instance  \cite{raymond_noel,CLPS_1,CLPS_rev,erdos_0,FoHe2010,frank_1,loto_kach_iso_robin,KaLo2024} and the references therein. Aharonov-Bohm magnetic potentials play a role in the geometry of spectral minimal partitions of plane domains \cite{terracini_helffer}; in two dimensions, isoperimetric inequalities for the lowest eigenvalue have been obtained in \cite{CPS_AB}, and Steklov eigenvalues estimates in the spirit of Fraser-Schoen have been studied for free boundary surfaces in the unit ball of $\real 3$ in \cite{PS_annuli}. Recently, magnetic spectral asymptotics for general magnetic potentials have been studied in \cite{Siffert} and, finally, we mention also \cite{gittins_hodge}  where the authors study the spectrum of the magnetic Hodge-Laplace operator acting on forms.

\smallskip

In  the next section we state our main results. Briefly, we first show that the main inequality proven in \cite{CEIS2022} is sharp, asymptotically, as the potential form $A$ tends to zero. We then focus on two dimensional closed, orientable Riemannian surfaces of arbitrary genus $g$ endowed with a closed magnetic potential. This {\it ``weak magnetic potential asymptotics''} will be used to isolate a countable family of ground state energies, called {\it ground state spectrum}, the knowledge of which, as shown in our main result, determines the volume and the conformal class of the surface; in particular, ground state-isospectral (in our sense) hyperbolic surfaces are isometric, a result which does not hold for the classical Laplace-Beltrami spectrum.

\smallskip

A concrete case of study will be flat tori, for which we compute in detail the magnetic spectrum and discuss optimization of the magnetic ground state energy under volume constraint.  An important tool in our two-dimensional analysis is the conformal invariance of the magnetic energy, which holds in our Aharonov-Bohm magnetic situation. Thanks to this invariance, the above optimization extends to any surface of genus one. 

\smallskip

We finally introduce some spectral invariants in the arbitrary genus $g$, which are conformal by definition and involve the geometry of the so-called Jacobian torus (or Jacobian variety) of the surface, which is (magnetic) spectrally determined and naturally arises from the lattice of integral $1$-forms on the surface. The Jacobian variety plays a crucial role in algebraic geometry \cite{FK}; 
see \cite{buser21,BuSa} for a more differential geometric approach. See also the introduction of the Ph.D. thesis of B. Mützel \cite{Mutzel}.

\section{The results of the paper}\label{sec:results}

\subsection{Asymptotics of the ground state energy for weak magnetic potentials}

Our first result is rather general and applies to any compact Riemannian manifold, complementing \cite[Theorem 2]{CEIS2022}. We determine the  asymptotics of $\lambda_1(M,h,A)$ as $\norm{A}_{L^2(h)}\to 0$, in the following precise sense.

 \begin{thm}\label{thm:asymp} Let $(M,h)$ be a compact Riemannian manifold and let $A$ be a co-closed  potential. Then, there exist constants $\rho>0$ and $C>0$ such that for all $r\in (0,\rho)$:
 
 \begin{equation}\label{asymptotics}
 \frac{\Vert A\Vert_{L^2(h)}^2}{{{\rm vol}(M,h)}}r^2-Cr^4\le     \lambda_1(M,h,rA) \le \frac{\Vert A\Vert_{L^2(h)}^2}{{\rm vol}(M,h)}r^2.
 \end{equation}
 In particular, since for $r$ small $d_h(rA,\mathcal L^{\star})^2=r^2\norm{A}^2_{L^2(h)}$, it follows that
 \begin{equation}\label{limit}
     \lim_{r \to 0} \frac{{\rm vol}(M,h) \lambda_1(M,h,rA)}{d_h(rA,\mathcal L^{\star})^2} =1.
 \end{equation}
 \end{thm}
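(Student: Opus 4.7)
The upper bound in (\ref{asymptotics}) is immediate: testing the Rayleigh quotient (\ref{minmax}) with the constant function $u\equiv 1$ gives $R_{rA}(1)=r^2\|A\|_{L^2(h)}^2/{\rm vol}(M,h)$, so $\lambda_1(M,h,rA)\le r^2\|A\|_{L^2(h)}^2/{\rm vol}(M,h)$. Equivalently, this is (\ref{ineg 1}) with $0\in\mathcal L^\star$ the nearest lattice point when $r$ is small. The substance of the theorem is therefore the lower bound, which I would obtain by showing that any $L^2$-normalized minimizer of $R_{rA}$ is close to a constant, with a sharp quantitative rate.

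Let $u_r\in C^\infty(M,\C)$ satisfy $\Delta_{rA}u_r=\lambda_1(M,h,rA)u_r$ and $\|u_r\|_{L^2}=1$. Decompose
$$u_r=c_r+v_r,\qquad c_r:=\frac{1}{{\rm vol}(M,h)}\int_M u_r\,dv_h\in\C,\qquad \int_M v_r\,dv_h=0,$$
so that $|c_r|^2\,{\rm vol}(M,h)+\|v_r\|_{L^2}^2=1$. A direct computation gives $|\nabla^{rA}u|_h^2=|du|_h^2-2r\,{\rm Im}(\bar u\langle du,A\rangle_h)+r^2|u|^2|A|_h^2$. Applying this to $u_r$ (with $du_r=dv_r$) and using the co-closedness $\delta A=0$ to kill the cross term involving $\bar c_r$ through $\bar c_r\int_M\langle dv_r,A\rangle_h\,dv_h=\bar c_r\int_M v_r\,\delta A\,dv_h=0$, one arrives at
$$\lambda_1(M,h,rA)=\|dv_r\|_{L^2}^2-2r\,{\rm Im}\!\int_M\bar v_r\langle dv_r,A\rangle_h\,dv_h+r^2\!\int_M|u_r|^2|A|_h^2\,dv_h.$$

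The crux --- and the main obstacle --- is a bootstrap producing the sharp bound $\|v_r\|_{L^2},\|dv_r\|_{L^2}=O(r^2)$. A first crude estimate $\|v_r\|_{L^2},\|dv_r\|_{L^2}=O(r)$ follows from the upper bound $\lambda_1=O(r^2)$, Cauchy--Schwarz on the cross term, and the Poincar\'e inequality $\|v_r\|_{L^2}^2\le\lambda_2(M,h)^{-1}\|dv_r\|_{L^2}^2$. To sharpen it I project the eigenvalue equation onto the space of zero-mean functions, obtaining
$$\Delta v_r=-r^2c_r\psi+\lambda_1 v_r-2ir\langle A,dv_r\rangle_h-r^2 P(|A|_h^2 v_r),$$
where $\psi:=|A|_h^2-\|A\|_{L^2(h)}^2/{\rm vol}(M,h)$ has zero mean and $P$ denotes orthogonal projection onto zero-mean functions. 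Pairing with $v_r$, the left-hand side equals $\|dv_r\|_{L^2}^2$; the crude estimate lets one absorb the quadratic terms in $v_r$ (which now carry $O(r)$ prefactors) into the left-hand side, leaving $\|dv_r\|_{L^2}^2\le Cr^2\|dv_r\|_{L^2}$, hence the asserted $O(r^2)$ bound.

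Plugging these sharp bounds back into the Rayleigh expression makes every $v_r$-dependent term of order $r^4$ or smaller, while $|c_r|^2=(1-\|v_r\|_{L^2}^2)/{\rm vol}(M,h)=1/{\rm vol}(M,h)+O(r^4)$, so $r^2|c_r|^2\|A\|_{L^2(h)}^2=r^2\|A\|_{L^2(h)}^2/{\rm vol}(M,h)+O(r^6)$. Combining the terms with the correct signs yields $R_{rA}(u_r)\ge r^2\|A\|_{L^2(h)}^2/{\rm vol}(M,h)-Cr^4$, which is the lower bound in (\ref{asymptotics}). The limit (\ref{limit}) then follows by sandwiching, using that $d_h(rA,\mathcal L^\star)^2=r^2\|A\|_{L^2(h)}^2$ for $r$ small (so the origin is the closest lattice point).
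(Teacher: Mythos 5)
Your argument is correct, and its skeleton is sound: the upper bound is exactly the paper's (constant test function), the co-closedness of $A$ is used at the same crucial spot (to kill the term coupling the constant mode to $A$), and your bootstrap does deliver $\|v_r\|_{L^2},\|dv_r\|_{L^2}=O(r^2)$, which is precisely what is needed to make all eigenfunction-dependent terms in the Rayleigh identity of size $O(r^4)$. But the route for the lower bound is genuinely different from the paper's. You decompose the ground state $u_r=c_r+v_r$ into its mean and zero-mean parts, use the fixed non-magnetic spectral gap $\lambda_2(M,h)$ (Poincar\'e inequality) and the projected eigenvalue equation, and run a two-step bootstrap ($O(r)$ then $O(r^2)$). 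The paper instead decomposes the \emph{constant} function, $1=u_1+v$ with $v\perp u_1$ in $L^2$, and exploits two facts: the min-max characterization of the second \emph{magnetic} eigenvalue gives $\|v\|_{L^2}^2\le q(v)/\lambda_2(M,h,rA)$, and Green's formula together with $\Delta_{rA}1=r^2|A|_h^2$ (this is where $\delta A=0$ enters) gives the identity $q(v)=q(1,v)=r^2\int_M|A|_h^2\bar v\,dv_h$, whence $q(v)=O(r^4)$ in one line without ever touching the eigenfunction equation; plugging into $q(1)/\|1\|^2$ then yields the lower bound. The trade-off: the paper's argument is shorter and avoids any bootstrap, but needs the (asserted) continuity $\lambda_2(M,h,rA)\to\lambda_2(M,h)>0$ as $r\to0$ to fix the constant; your argument uses only the fixed gap $\lambda_2(M,h)$ and elementary absorption, at the cost of more computation, and makes it transparent that the $O(r^4)$ error is governed by the $O(r^2)$ deviation of the minimizer from a constant. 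One cosmetic point: when you ``pair'' the projected equation with $v_r$, the resulting identity is complex, so you should take real parts (or bound moduli) before absorbing terms; this does not affect the conclusion.
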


The limit \eqref{limit} in Theorem \ref{thm:asymp} shows that the estimate (\ref{ineg 1}) obtained in \cite{CEIS2022} is asymptotically exact as $\norm{A}_{L^2(h)}\to 0$. 

Note also that \eqref{asymptotics} can be re-written:
\begin{equation}\label{eltwonorm}
\lim_{r\to 0}\dfrac {{\rm vol}(M,h)\lambda_1(M,h,rA)}{r^2}=\norm{A}_{L^2(h)}^2
\end{equation}
showing that the knowledge of the lowest normalized eigenvalue ${\rm vol}(M,h)\lambda_1(M,h,rA)$ for $r$ in a small neighborhood of $0$ (or on a sequence $r_n\to 0$) determines the $L^2$-norm of $A$. In dimension $2$ it is known that the $L^2$-norm of a $1$-form is conformally invariant; thus, the limit in the left-hand side of \eqref{eltwonorm} is a conformal invariant as well. This phenomenon was first observed in \cite{PS_annuli} in the context of the magnetic Laplacian on Riemannian cylinders with magnetic Steklov boundary conditions; in fact Theorem \ref{thm:asymp} can be extended (in arbitrary dimension) to manifolds with boundary endowed with magnetic Neumann (or Steklov) boundary conditions.

\subsection{Main result: the magnetic ground state 
energies determine the conformal class of a surface}

We now focus on dimension $2$, and consider the (unique) compact, orientable, differentiable surface of genus $g\geq 1$, which we denote $\Sigma_g$. We fix the metric $h$ on $\Sigma_g$ and study the ground state energy
$$
\lambda_1(\Sigma_g,h,A)
$$
where $A$ is a closed $1$-form on $\Sigma_g$. We recall that this ground state energy is strictly positive, unless $A\in\mathcal L^{\star}$ (i.e. $A$ is integral), in which case it is zero.  Here is the main result of this paper.

\begin{thm}\label{mainthm} Assume that the metrics $h$ and $h_0$ on $\Sigma_g$ are such that:
\begin{equation}\label{iso}
\lambda_1(\Sigma_g,h,A)=\lambda_1(\Sigma_g,h_0,A)\quad\text{for all closed $1$-forms $A$ on $\Sigma_g$}.
\end{equation}
 Then $(\Sigma_g,h)$ and $(\Sigma_g,h_0)$ are conformal and have the same volume. 
\end{thm}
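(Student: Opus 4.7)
The plan is to combine the asymptotic formula \eqref{eltwonorm} of Theorem \ref{thm:asymp} with the conformal invariance of the Hodge star on $1$-forms in dimension two. This reduces the isospectrality hypothesis to the equality of two Hodge structures on $H^1(\Sigma_g,\R)$, from which both the coincidence of volumes and the coincidence of conformal classes follow — the latter via a Torelli-type identification.

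Fix any cohomology class $[\alpha]\in H^1(\Sigma_g,\R)$ and let $\alpha_h$, $\alpha_{h_0}$ denote its $h$- and $h_0$-harmonic representatives. These differ from $\alpha$ (and from each other) by an exact, hence integral, $1$-form, so by gauge invariance
\[
\lambda_1(\Sigma_g,h,r\alpha_h)=\lambda_1(\Sigma_g,h,r\alpha)=\lambda_1(\Sigma_g,h_0,r\alpha)=\lambda_1(\Sigma_g,h_0,r\alpha_{h_0})
\]
for every $r$. Since $\alpha_h$, resp.\ $\alpha_{h_0}$, is co-closed for $h$, resp.\ $h_0$, applying \eqref{eltwonorm} to both ends as $r\to0$ yields
\[
\frac{\|\alpha_h\|_{L^2(h)}^2}{\mathrm{vol}(h)}=\frac{\|\alpha_{h_0}\|_{L^2(h_0)}^2}{\mathrm{vol}(h_0)}\quad\text{for every }[\alpha]\in H^1(\Sigma_g,\R).
\]

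Now in dimension two the Hodge star on $1$-forms is conformally invariant; hence the codifferential on $1$-forms, the space of harmonic representatives, and the $L^2$-pairing $\int\alpha\wedge*\beta$ all depend only on the conformal class. Consequently $Q_{[h]}([\alpha]):=\|\alpha_h\|_{L^2(h)}^2$ is a positive-definite quadratic form on $H^1(\Sigma_g,\R)$ depending only on $[h]$, and the previous identity reads $Q_{[h]}=c\,Q_{[h_0]}$ with $c=\mathrm{vol}(h_0)/\mathrm{vol}(h)>0$. Using the topological intersection form $I([\alpha],[\beta])=\int\alpha\wedge\beta$ and the relation $Q_{[h]}([\alpha],[\beta])=I([\alpha],*_h[\beta])$, nondegeneracy of $I$ forces $*_h=c\,*_{h_0}$ on $H^1(\Sigma_g,\R)$. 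Squaring and using $*^2=-\mathrm{id}$ on $1$-forms in dimension two gives $c^2=1$, and positivity yields $c=1$. Hence $\mathrm{vol}(h)=\mathrm{vol}(h_0)$ and $*_h=*_{h_0}$ on $H^1(\Sigma_g,\R)$.

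Finally, $*_h=*_{h_0}$ on $H^1$ is equivalent to equality of the Hodge decompositions $H^1(\Sigma_g,\C)=H^{1,0}\oplus H^{0,1}$ on the common polarized lattice $H^1(\Sigma_g,\mathbb{Z})$. For $g=1$ this is elementary, since the conformal class of a torus is determined by the $2\times 2$ period matrix; for $g\geq 2$, the classical Torelli theorem for compact Riemann surfaces gives $(\Sigma_g,[h])\cong(\Sigma_g,[h_0])$, i.e.\ conformal equivalence. The main obstacle is this last step: the earlier parts are asymptotics plus linear algebra, whereas the final identification of the conformal class invokes a nontrivial Hodge-theoretic input (or an appropriate direct substitute via a symplectic basis and explicit analysis of the period matrix).
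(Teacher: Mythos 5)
Your proposal is correct and follows essentially the same route as the paper's proof: the weak-potential asymptotics of Theorem \ref{thm:asymp} (with gauge invariance to pass to harmonic representatives) recover the volume-normalized $L^2$ pairing on $H^1(\Sigma_g,\mathbb R)$, a Riemann-relations-type identity forces the proportionality constant to be $1$ (hence equal volumes), and Torelli's theorem then yields conformal equivalence. Your basis-free way of getting $c=1$, via $Q_{[h]}(\alpha,\beta)=I(\alpha,\star_h\beta)$, nondegeneracy of $I$, and $\star_h^2=-\mathrm{id}$, is just a compact reformulation of the paper's computation that the Gram matrix in a canonical homology basis is symplectic of determinant one, so the two arguments coincide in substance.
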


The proof uses Theorem \ref{thm:asymp} and a classical result in the theory of Riemann surfaces, the theorem of Torelli, to the effect that if two compact Riemann surfaces have the same Jacobian variety, they are conformal. 
Now let $g\geq 2$. Since there is only one metric of constant curvature $-1$ in any fixed conformal class, we have:

\begin{cor}\label{hyperbolic} Assume that $h$ and $h_0$ are hyperbolic metrics on $\Sigma_g$ which satisfy \eqref{iso}. Then $h$ and $h_0$ are isometric. 
\end{cor}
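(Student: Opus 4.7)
The plan is to reduce the corollary directly to Theorem \ref{mainthm} combined with the classical uniqueness of a hyperbolic representative inside a given conformal class on a closed orientable surface $\Sigma_g$ of genus $g\geq 2$.

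First, I would apply Theorem \ref{mainthm} to the pair $(h,h_0)$: the hypothesis \eqref{iso} immediately produces a smooth function $f\in C^{\infty}(\Sigma_g)$ such that $h_0=e^{2f}h$. The equality of volumes ${\rm vol}(\Sigma_g,h)={\rm vol}(\Sigma_g,h_0)$, which is part of the conclusion of Theorem \ref{mainthm}, is in fact automatic here by Gauss--Bonnet, since every hyperbolic metric on $\Sigma_g$ has total area $2\pi(2g-2)$; so the only piece I really need is the conformal equivalence.

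Next, I would translate the constraint $K_h=K_{h_0}=-1$ into a scalar PDE for $f$. With the paper's analyst sign convention $\Delta=-\sum_k\partial^2/\partial x_k^2$, the conformal transformation law for Gaussian curvature reads $K_{h_0}=e^{-2f}(K_h+\Delta_h f)$, and substituting $K_h=K_{h_0}=-1$ yields
$$
\Delta_h f \;=\; 1 - e^{2f}.
$$
A one-line maximum principle argument now concludes: at a point $p_{\max}$ where $f$ attains its maximum one has $\Delta_h f(p_{\max})\geq 0$, forcing $e^{2f(p_{\max})}\leq 1$ and hence $\max f\leq 0$; symmetrically, at a minimum $\min f\geq 0$. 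Therefore $f\equiv 0$ and $h_0=h$, so in particular $(\Sigma_g,h)$ and $(\Sigma_g,h_0)$ are isometric.

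The whole argument is routine once Theorem \ref{mainthm} is available: all the genuine work sits in that theorem and in the use of Torelli's theorem that underlies it. The only point that requires care here is bookkeeping with the sign convention for $\Delta$, since the opposite sign would flip the inequality in the maximum principle and produce the wrong conclusion. I do not foresee any other obstacle.
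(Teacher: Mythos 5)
Your proof is correct and follows essentially the paper's own route: the corollary is deduced from Theorem \ref{mainthm} together with the uniqueness of the constant-curvature $-1$ metric in a conformal class, which the paper simply cites as classical while you reprove it via the curvature equation $\Delta_h f=1-e^{2f}$ and the maximum principle (with the sign convention handled correctly). One small wording point: the conclusion of Theorem \ref{mainthm}, coming from Torelli, is that $(\Sigma_g,h)$ and $(\Sigma_g,h_0)$ are conformally equivalent via some diffeomorphism $\phi$, not that $h_0=e^{2f}h$ with the identity map; applying your argument to $\phi^{*}h_0$, which is again hyperbolic, gives $\phi^{*}h_0=h$, so $\phi$ is an isometry, which is precisely the assertion of the corollary.
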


Note that the family of eigenvalues in \eqref{iso} needed to determine volume and conformal class consists only of lowest eigenvalues (ground state energies), but 
corresponding to an infinite family of potential $1$-forms, i.e., closed potentials. By gauge invariance, we can restrict this family to that of harmonic $1$-forms, which form a vector space of dimension $2g$. 
But indeed what we need is only the behavior of $\lambda_1$ as $A\to 0$, roughly speaking, the asymptotics for {\it low energy} states; this allows to  obtain the same results of Theorem \ref{mainthm}, by restricting \eqref{iso} to a countable family of magnetic potentials, hence, to a countable family of eigenvalues,  which we term {\it (magnetic) ground state spectrum} and which we define in the next subsection.

\subsection{The (magnetic) ground state spectrum of a surface}

We start by fixing  a canonical homology basis $(\chi_1,\dots,\chi_{2g})$ (see Subsection \ref{sub:can_bas} for the definition) and we let
$(\alpha_1,\dots,\alpha_{2g})$ be its dual basis of closed forms in $H^1(\Sigma_g,\mathbb Z)$. Note that $(\alpha_1,\dots,\alpha_{2g})$ depends only on the differentiable structure of $\Sigma_g$, and not on the metric.  Then we define the following family  of magnetic eigenvalues:
\begin{equation}\label{mgs}
\mu_{jk,n}(\Sigma_g,h)\doteq\lambda_1\left(\Sigma_g,h, \frac 1n(\alpha_j+\alpha_k)\right) \quad 1\leq j\leq k\leq 2g, \quad n\in\mathbb N.
\end{equation}
We call $\{\mu_{jk,n}(\Sigma_g,h)\}$ the {\it (magnetic) ground state spectrum of the metric $h$}.

\begin{thm}\label{thm_sequences} The volume and the conformal class of the metric $h$ on $\Sigma_g$ are determined by the ground state spectrum of $h$. 
\end{thm}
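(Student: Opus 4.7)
The plan is to use the low-energy asymptotics of Theorem \ref{thm:asymp} to extract, from the countable family $\{\mu_{jk,n}\}$, the normalized Gram matrix of the harmonic representatives of the topological basis $(\alpha_1,\dots,\alpha_{2g})$, and then to run the same Torelli argument underlying Theorem \ref{mainthm}, with one extra step to pin down the volume.

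Let $\tilde\alpha_i$ denote the harmonic representative of $\alpha_i$ in the metric $h$. By gauge invariance, $\mu_{jk,n}(\Sigma_g,h)=\lambda_1(\Sigma_g,h,\tfrac1n(\tilde\alpha_j+\tilde\alpha_k))$, and applying Theorem \ref{thm:asymp} along $r_n=1/n\to 0$ gives
\begin{equation*}
\lim_{n\to\infty} n^2\,{\rm vol}(\Sigma_g,h)\,\mu_{jk,n}(\Sigma_g,h)\;=\;\|\tilde\alpha_j+\tilde\alpha_k\|_{L^2(h)}^{2},\qquad 1\le j\le k\le 2g.
\end{equation*}
The case $j=k$ yields $\|\tilde\alpha_j\|_{L^2(h)}^2/{\rm vol}(\Sigma_g,h)$, and polarization then recovers every entry of the normalized Gram matrix
\begin{equation*}
\widetilde G_{jk}(h)\;\doteq\;\frac{\langle\tilde\alpha_j,\tilde\alpha_k\rangle_{L^2(h)}}{{\rm vol}(\Sigma_g,h)},\qquad 1\le j,k\le 2g.
\end{equation*}
So the ground state spectrum of $h$ determines $\widetilde G(h)$.

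Next I would use a scale-fixing identity for the true Gram matrix $G(h)$ coming from the two-dimensional Hodge relation $\star_h^2=-I$ on $1$-forms. With $\Omega_{jk}\doteq\int_{\Sigma_g}\alpha_j\wedge\alpha_k$ purely topological (equal to the standard symplectic form in our canonical basis, so $\det\Omega=1$), the identity $\int\alpha\wedge\star_h\beta=\langle\alpha,\beta\rangle_{L^2(h)}$ shows that $\star_h$ has matrix $\Omega^{-1}G(h)$ in the basis $(\tilde\alpha_j)$, and imposing $(\Omega^{-1}G(h))^{2}=-I$ rewrites as $G(h)\,\Omega^{-1}\,G(h)=-\Omega$; taking determinants (and using $G(h)>0$) gives $\det G(h)=1$. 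Combined with $G(h)={\rm vol}(\Sigma_g,h)\,\widetilde G(h)$ this forces
\begin{equation*}
{\rm vol}(\Sigma_g,h)\;=\;\big(\det\widetilde G(h)\big)^{-1/(2g)},
\end{equation*}
so $\widetilde G(h)$ alone determines ${\rm vol}(\Sigma_g,h)$ and hence also $G(h)$.

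Finally, if $h$ and $h_0$ share the same ground state spectrum, the previous steps give ${\rm vol}(\Sigma_g,h)={\rm vol}(\Sigma_g,h_0)$ and $G(h)=G(h_0)$. Equal Gram matrices on $(\tilde\alpha_j)$ with the common intersection form $\Omega$ amount to equal Jacobian varieties as principally polarized abelian varieties, so Torelli's theorem -- applied exactly as in the proof of Theorem \ref{mainthm} -- yields that $(\Sigma_g,h)$ and $(\Sigma_g,h_0)$ are conformally equivalent. I expect the main obstacle to be this volume-recovery step: the low-energy asymptotics of $\lambda_1$ see $G$ only up to the volume factor, and it is precisely the two-dimensional identity $\star^2=-I$, together with the purely topological datum $\Omega$, that removes this scaling ambiguity and makes the volume a magnetic spectral invariant.
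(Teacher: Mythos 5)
Your proposal is correct and follows essentially the same route as the paper: the weak-potential asymptotics of Theorem \ref{thm:asymp} recover the volume-normalized Gram matrix from the $\mu_{jk,n}$ by polarization, the Riemann relations (in your formulation, $\star_h^2=-I$ together with the topological intersection form, which is exactly the paper's identity $\Gamma J\Gamma=J$ giving $\det\Gamma=1$) fix the scale and hence the volume, and Torelli's theorem converts equality of the resulting principally polarized Jacobians into conformal equivalence. The only cosmetic difference is that you extract the volume directly via $\mathrm{vol}=(\det\widetilde G)^{-1/(2g)}$ rather than by comparing two isospectral metrics, and you quote the Gram-matrix-to-period-matrix translation rather than computing $Z=-D^{-1}B^t+iD^{-1}$ explicitly as the paper does.
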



 In fact, by Theorem \ref{thm:asymp}, the data above are enough to reconstruct the entries $\Gamma_{jk}$ (rescaled by the volume) of the so-called {\it Gram matrix} (see Subsection \ref{sub:can_bas}), a $2g\times 2g$ matrix naturally associated to the conformal class of $h$, once  a canonical homology basis of $H_1(\Sigma_g,\mathbb Z)$ is fixed. That this matrix actually determines the conformal class of $h$ follows from a deep theorem in the theory of Riemann surfaces, due to Torelli \cite{torelli}, which we will use to conclude the proof. 

 \smallskip

 \begin{rem} Since what matters is only the behavior of the ground state when the potential form is small,  we can replace the potential forms $\frac1n(\alpha_j+\alpha_k)$ in \eqref{mgs} by $c_n(\alpha_j+\alpha_k)$
 where $\{c_n\}$ is any sequence of positive numbers converging to zero as $n\to\infty$; the conclusion of Theorem \ref{thm_sequences} still holds. Also, volume and conformal class are determined by the knowledge of
 $\mu_{jk,n}(\Sigma_g,h)$ when $n\geq N_0$, where $N_0$ is a fixed positive integer.
 \end{rem}

\smallskip

Corollary \ref{hyperbolic} takes the following form.

\begin{cor}\label{hyperbolictwo} Assume that $h$ and $h_0$ are hyperbolic metrics on $\Sigma_g$ with the same ground state spectrum. Then $h$ and $h_0$ are isometric. 
\end{cor}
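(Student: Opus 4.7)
The plan is to obtain Corollary \ref{hyperbolictwo} as an immediate corollary of Theorem \ref{thm_sequences}, by the exact same reduction that the paper already used to pass from Theorem \ref{mainthm} to Corollary \ref{hyperbolic}. Assume that $h$ and $h_0$ are hyperbolic metrics on $\Sigma_g$ (so in particular $g\geq 2$) and that they share the same ground state spectrum, i.e.\ $\mu_{jk,n}(\Sigma_g,h)=\mu_{jk,n}(\Sigma_g,h_0)$ for all admissible $j,k,n$. Then Theorem \ref{thm_sequences} directly yields that $h$ and $h_0$ have the same volume and lie in the same conformal class on $\Sigma_g$.

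Next I would invoke a standard consequence of the uniformization theorem for closed surfaces of genus $g\geq 2$: on any such $\Sigma_g$, every conformal class contains a \emph{unique} Riemannian metric of constant Gaussian curvature $-1$. Since both $h$ and $h_0$ are hyperbolic metrics in the conformal class identified above, this uniqueness forces $h=h_0$, which is in particular an isometry. No further argument beyond this classical fact is needed once the conformal equivalence has been secured.

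I expect no real obstacle in this step: all the analytic and algebraic-geometric work has been done in proving Theorem \ref{thm_sequences} (via the asymptotics of Theorem \ref{thm:asymp} and the theorem of Torelli). One small sanity check worth stating is that the volume equality produced by Theorem \ref{thm_sequences} is consistent with, and in fact automatic from, Gauss--Bonnet: any hyperbolic metric on $\Sigma_g$ has volume $4\pi(g-1)$, so the two hyperbolic metrics are forced to have the same volume from the outset. Thus the genuinely informative output of Theorem \ref{thm_sequences} used here is the conformal class, and the rigidity in the hyperbolic setting is supplied entirely by uniformization.
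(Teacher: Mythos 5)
Your proposal is correct and follows essentially the same route as the paper: Theorem \ref{thm_sequences} gives that $h$ and $h_0$ are conformal, and the uniqueness of the constant curvature $-1$ metric in a conformal class (for $g\geq 2$) then yields the conclusion. The only slight imprecision is writing $h=h_0$: since the conformal equivalence from Torelli is via some diffeomorphism, the uniqueness argument gives that $h$ and the pullback of $h_0$ coincide, i.e.\ the metrics are isometric rather than literally equal, which is exactly the stated conclusion.
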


\smallskip

In what follows, {\it isospectral} refers to the classical notion in spectral geometry, namely, same Laplace-Beltrami spectrum. According to Wolpert  \cite{Wo79} and Buser (see the introduction in \cite{Bu86}) Gelfand conjectured that two hyperbolic, isospectral surfaces are necessarily isometric.
This conjecture was first disproved by Vigneras in \cite{Vi78} for large genus and then by Buser \cite{Bu86} in genus 5. In particular, the Laplace-Beltrami spectrum does not determine the hyperbolic metric in the conformal class. Corollary \ref{hyperbolictwo} states that this happens in the magnetic sense, as defined above.

\smallskip

In genus one we have a similar phenomenon, in the sense that the flat metric is determined by the ground state spectrum.   In fact, this is true in any dimension, see Theorem \ref{isosp_ntori}. 

\begin{thm}\label{flat} Two $d$-dimensional flat tori with the same ground state spectrum are necessarily isometric.  
\end{thm}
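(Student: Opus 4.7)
The plan is to apply Theorem \ref{thm:asymp} to reduce the claim to an equality of lattice Gram matrices, which in turn forces the flat tori to be isometric; no Torelli-type input is needed in this case.

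First, I would set up the lattice picture. Writing $T = \real d/\mathcal L$ and fixing a basis $(e_1,\ldots,e_d)$ of $\mathcal L$, let $E$ be the matrix with columns $e_j$ and set $M := E^{T}E$, the associated Gram matrix, so that $\mathrm{vol}(T,h) = \sqrt{\det M}$. The integral harmonic $1$-forms $(\alpha_1,\ldots,\alpha_d)$ dual to the homology basis induced by the $e_j$ are constant $1$-forms, encoded (up to a universal factor coming from the $2\pi$-integrality convention) by the rows of $E^{-1}$. A direct calculation then yields
\[
\frac{\langle \alpha_j,\alpha_k\rangle_{L^2(h)}}{\mathrm{vol}(T,h)} \;=\; c\,(M^{-1})_{jk},
\]
for an absolute constant $c>0$ independent of the torus.

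Next, I would apply \eqref{eltwonorm} to the potentials $A_{jk} = \alpha_j+\alpha_k$ with $r = 1/n$: the ground state spectrum directly determines the family of limits
\[
\lim_{n\to\infty} n^{2}\,\mu_{jk,n}(T,h) \;=\; \frac{\|\alpha_j+\alpha_k\|_{L^2(h)}^{2}}{\mathrm{vol}(T,h)}, \qquad 1\le j\le k\le d.
\]
Setting $k=j$ recovers the diagonal entries $\|\alpha_j\|_{L^2(h)}^{2}/\mathrm{vol}(T,h)$, and polarization then recovers the off-diagonal entries $\langle \alpha_j,\alpha_k\rangle_{L^2(h)}/\mathrm{vol}(T,h)$. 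Hence the ground state spectrum determines the full symmetric matrix $cM^{-1}$, therefore also $M$ itself and in particular $\mathrm{vol}(T,h) = \sqrt{\det M}$.

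Finally, two lattices in $\real d$ with the same Gram matrix (in some basis each) are related by an element of $O(d)$, which induces an isometry of the corresponding flat tori. Thus two flat $d$-tori sharing the same ground state spectrum, with respect to some choice of basis on each, have isometric lattices and are therefore isometric as Riemannian manifolds. The main point requiring care is bookkeeping the $2\pi$ factor in the definition of an integral $1$-form versus the dual basis $\alpha_j$, together with the verification that the $d(d+1)/2$ pairs $(j,k)$ with $j\le k$ suffice to reconstruct the symmetric matrix $M^{-1}$ by polarization; once this is set up, the argument is a clean consequence of Theorem \ref{thm:asymp} and elementary linear algebra, bypassing the Torelli input needed for surfaces of genus $g\geq 2$.
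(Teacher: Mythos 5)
Your argument is correct, and its core is the same as the paper's proof of Theorem \ref{isosp_ntori}: apply Theorem \ref{thm:asymp} (in the form \eqref{eltwonorm}) to the potentials $\frac1n(\alpha_j+\alpha_k)$ and recover the volume-normalized Gram data $\scal{\alpha_j}{\alpha_k}_{L^2(h)}/\abs{h}$ by polarization, which is exactly the "easy exercise" step in the paper. Where you diverge is the endgame. The paper only uses that the two Gram matrices are proportional, deduces that the Jacobian tori are homothetic, passes back to the tori themselves through the duality ${\rm Jac}(T^d,h)\sim$ dual torus, and then needs the \emph{exact} eigenvalue equalities once more: the homothety factor $b$ is killed by the rescaling law $\lambda_1(T^d,h,\cdot)=b^{-2}\lambda_1(T^d,h_0,\cdot)$ together with isospectrality. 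You instead observe that the normalized quantity $\Gamma(h)/\abs{h}$ is precisely $M^{-1}$, the inverse Gram matrix of the lattice (your computation with the rows of $E^{-1}$ is right, and the $2\pi$ bookkeeping only affects a universal constant), so the asymptotic limits alone already determine $M$, hence the volume $\sqrt{\det M}$ and the lattice up to ${\bf O}(d)$, with no duality or rescaling step needed. This buys a slightly sharper statement — the limits $\lim_{n\to\infty} n^2\mu_{jk,n}$ by themselves determine the flat torus up to isometry, whereas the paper's write-up invokes the exact values of the $\mu_{jk,n}$ at the last step — and, as you note, it confirms that no Torelli-type input is required in the flat case.
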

It is well known that this is not true for the Laplace-Beltrami operator, because, besides Milnor's classical examples, there are flat tori in dimension  four (and greater) which are isospectral but not isometric.

\smallskip

Given Theorems \ref{thm_sequences} and \ref{flat} one could ask:

\nero {\it Is it true that two surfaces $(\Sigma_g,h)$ and $(\Sigma_g,h_0)$ which have the same ground state spectrum are actually isometric?}

\subsection{Some (conformal) spectral invariants}\label{sub:spec_inv} We just saw that the magnetic Laplacian associated to closed potentials has deep relations with the conformal structure of a surface. We introduce here some conformal invariants related to the magnetic ground state energy. To simplify the notation, through all the paper we shall denote by $|h|$ the volume of the surface $\Sigma_g$ for the metric $h$, namely $|h|:={{\rm vol}(\Sigma_g,h)}$.

For a given metric $h$ on $\Sigma_g$ and a closed $1$-form $A$, define:
\begin{equation}\label{firstci}
\Lambda_1(\Sigma_g,[h],A)=\sup_{h'\in[h]}\abs{h'} \lambda_1(\Sigma_g,h',A).
\end{equation}
We remark that the quantity $\abs{h} \lambda_1(\Sigma_g,h,A)$ is often called {\it normalized ground state energy} and it is easily seen to be scale invariant. The supremum is finite as we will see soon. By way of contrast, the infimum is zero, see Subsection \ref{sub:inf_0}. We then define 
\begin{equation}\label{secondci}
\Lambda_1(\Sigma_g,[h])=\sup_{A:dA=0}\Lambda_1(\Sigma_g,[h],A).
\end{equation}
Again note that the supremum can be taken over all {\it harmonic} potentials. Finally we define
$$
\Lambda_1(\Sigma_g)=\inf_{[h]}\Lambda_1(\Sigma_g,[h]).
$$
Note that this last invariant depends only on the genus, hence the topological structure of the surface. In the next section we will  study and compute the above invariants when the genus is one. 

\smallskip

{\bf The Jacobian torus.} All of the invariants in the previous section can be estimated from above  by the geometry of a flat torus canonically associated to the surface: the {\it Jacobian torus} of  $(\Sigma_g,h)$; in algebraic geometry, it is known as the {\it Jacobian variety} of the given (compact, orientable) Riemann surface, and plays an important role in the theory of Riemann surfaces. 

\smallskip

Let $(M,h)$ be a compact, $d$-dimensional Riemannian manifold with first Betti number $b_1(M)\geq 1$, and let $V\doteq{\rm Har}(M,h)$ be the vector space of $h$-harmonic $1$-forms endowed with the $L^2$-inner product:
$$
\scal{\omega}{\phi}_{L^2(h)}=\int_{M}\scal{\omega}{\phi}_hdv_h.
$$
As a vector space, $V$ is isomorphic to $\real {b_1(M)}$, by the Hodge theorem. We let $L^{\star}$ be the lattice in $V$ formed by those harmonic forms having {\it integral period} $\oint_{c}\omega$ around every loop $c$ in $M$. In the previous section, we used the lattice $\mathcal L^{\star}$ of forms with {\it integral flux} $\frac{1}{2\pi}\oint_{c}\omega$; hence, here is the relation between the two lattices:
$$
\mathcal L^{\star}=2\pi L^{\star}.
$$
For us, it will be convenient to work with $L^{\star}$ instead of $\mathcal L^{\star}$.  We define the {\it Jacobian torus} of $(M,h)$ as the flat torus
$$
{\rm Jac}(M,h)={\rm Har}(M,h)/L^{\star}.
$$
\nero When $M=\Sigma_g$ (the surface of genus $g$) the Jacobian torus is $2g$-dimensional; it is conformally invariant and has volume $1$ for all $g$ (see Theorem \ref{detone}).

\smallskip

We then focus on the surface $(\Sigma_g,h)$. We have the following estimates from above. Given a harmonic $1$-form $A$, we denote $P_A=\frac{1}{2\pi}A$ and let:
$$
d_h(P_A,L^{\star})^2=\min_{\omega\in L^{\star}}\norm{P_A-\omega}^2_{L^2(h)}.
$$
Note that $d_h(P_A,L^{\star})$ measures the minimum distance of the harmonic form $P_A$ to the integral lattice $L^{\star}$, for the $L^2$ metric. Consider the canonical projection
$$
\pi: {\rm Har}(\Sigma_g,h)\to{\rm{Jac}}(\Sigma_g,h)
$$
then, the image of $L^{\star}$ under $\pi$ is a distinguished point $\mathcal O\in \rm{Jac}(\Sigma_g,h)$. As $\pi$ is a local isometry, it is clear that 
$$
d_h(P_A,L^{\star})=d_h(\pi(P_A),\mathcal O).
$$
Next, we set
\begin{equation}\label{inradius_h}
\mathcal R_h(L^{\star})^2\doteq{\rm max}_{A\in {\rm Har}(h)} d_h(P_A,L^{\star})^2.
\end{equation}
$\mathcal R_h(L^{\star})$ is the {\it inradius} of the lattice $L^{\star}$; that is, the radius of the largest open ball not intersecting $L^{\star}$. It measures how dense is $L^{\star}$ inside ${\rm Har}(\Sigma_g,h)$. Note that, given any two points on a flat torus, there exists an isometry (translation) taking one to the other. This immediately implies:
$$
\mathcal R_h(L^{\star})={\rm diam}({\rm Jac}(\Sigma_g,h))
$$
the diameter of the associated Jacobian torus. The following estimate follows immediately from \cite[Theorem 3]{CEIS2022}.

\begin{thm}\label{cig} One has:
$$
\threearray
{\Lambda_1(\Sigma_g,[h],A)\leq 4\pi^2 d_h(P_A,L^{\star})^2}
{\Lambda_1(\Sigma_g,[h])\leq 4\pi^2\mathcal R_h(L^{\star})^2=4\pi^2
{\rm diam}({\rm Jac}(\Sigma_g,h))^2}
{\Lambda_1(\Sigma_g)\leq 4\pi^2\inf_{[h]}{\rm diam}({\rm Jac}(\Sigma_g,h))^2}.
$$
\end{thm}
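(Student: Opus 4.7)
The plan is to derive all three estimates directly from the upper bound (\ref{ineg 1}) of \cite{CEIS2022}, namely
$\lambda_1(\Sigma_g,h,A)\leq \frac{1}{|h|}d_h(A,\mathcal L^{\star})^2$,
after rewriting the right-hand side in terms of the lattice $L^{\star}$ and using the conformal invariance of the relevant quantities, which is special to dimension $2$.

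First, since $\mathcal L^{\star}=2\pi L^{\star}$, scaling distances by $2\pi$ gives $d_h(A,\mathcal L^{\star})^2=4\pi^2 d_h(P_A,L^{\star})^2$ with $P_A=A/(2\pi)$. Multiplying (\ref{ineg 1}) by $|h|$ therefore yields $|h|\lambda_1(\Sigma_g,h,A)\leq 4\pi^2 d_h(P_A,L^{\star})^2$. To upgrade this to a bound on the supremum over $h'\in[h]$, I invoke the two features peculiar to a surface: the Hodge star on $1$-forms is conformally invariant, so $\mathrm{Har}(\Sigma_g,h')=\mathrm{Har}(\Sigma_g,h)$; and for any $1$-forms $\omega,\eta$ and $h'=e^{2\phi}h$, one has $\scal{\omega}{\eta}_{h'}dv_{h'}=\scal{\omega}{\eta}_h dv_h$, so the $L^2(h')$-inner product on $1$-forms agrees with the $L^2(h)$-inner product. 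Since the lattice $L^{\star}\subset \mathrm{Har}(\Sigma_g)$ is defined purely by the topological condition of having integral periods on loops, it does not depend on the conformal representative. Therefore $d_{h'}(P_A,L^{\star})=d_h(P_A,L^{\star})$ for every $h'\in[h]$, and taking the supremum over $h'\in[h]$ in the magnetically normalized bound gives the first inequality of the statement.

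For the second inequality, by gauge invariance the supremum over closed potentials $A$ appearing in the definition (\ref{secondci}) of $\Lambda_1(\Sigma_g,[h])$ can be taken over harmonic potentials only. As $A$ ranges over $\mathrm{Har}(\Sigma_g,h)$, the rescaled form $P_A=A/(2\pi)$ sweeps out the same vector space; consequently $\sup_{A}d_h(P_A,L^{\star})^2=\max_{B\in\mathrm{Har}(h)}d_h(B,L^{\star})^2=\mathcal R_h(L^{\star})^2$, which is the definition (\ref{inradius_h}). The identification $\mathcal R_h(L^{\star})=\mathrm{diam}(\mathrm{Jac}(\Sigma_g,h))$ already recorded in the paragraph preceding the theorem, justified by homogeneity of the flat torus under translations, gives the equality claimed in the second line. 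Finally, the third inequality follows simply by taking the infimum over conformal classes $[h]$ in the second inequality.

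There is no substantial obstacle here: the argument is essentially a chain of definitions plus one invocation of the earlier bound (\ref{ineg 1}). The only point that needs care is the conformal invariance of the triple $(\mathrm{Har}(\Sigma_g,h),\,L^{\star},\,d_h)$, which is precisely where the restriction to dimension two plays its role; without it, both the harmonic space and the $L^2$-distance would change under a conformal deformation and the passage to the supremum over $h'\in[h]$ would not be automatic.
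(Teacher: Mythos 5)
Your argument is correct and is essentially the paper's own proof: the paper deduces the theorem directly from the bound (\ref{ineg 1}) of \cite{CEIS2022}, combined with the rescaling $\mathcal L^{\star}=2\pi L^{\star}$, the conformal invariance in dimension two of harmonic $1$-forms, of the $L^2$-inner product on $1$-forms and hence of $d_h(P_A,L^{\star})$, and the identification $\mathcal R_h(L^{\star})={\rm diam}({\rm Jac}(\Sigma_g,h))$ stated just before the theorem. Your explicit treatment of the gauge-invariance reduction to harmonic potentials and of why the supremum over $h'\in[h]$ passes through is exactly the content the paper leaves implicit in the phrase ``follows immediately.''
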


{\bf Some remarks.} The above inequalities show that the spectral invariants defined in this section admit upper bounds in terms of the geometry of the Jacobian torus of the given conformal class.  In particular, the eigenvalue $\abs{h} \lambda_1(\Sigma_g,h,A)$ is uniformly bounded above for $h$ in any  given conformal class.  In Section \ref{bruno} we will make a number of remarks:

\begin{enumerate}[1)]

\item For arbitrary metrics, $\abs{h}\lambda_1(\Sigma_g,h,A)$ can be arbitrarily large; in fact, we prove in Theorem \ref{secondex} that  there exist a closed potential form $A$ and a sequence of metrics $h_n$ such that 
$$
\lim_{n\to\infty}\abs{h_n} \lambda_1(\Sigma_g,h_n,A)=+\infty.
$$

\item The normalized eigenvalue $\abs{h}\lambda_1(\Sigma_g,h,A)$ can be arbitrarily small, even in a given conformal class. In fact, we prove in Theorem \ref{thm_inf_0}:
$$
\inf_{h'\in [h]}\abs{h'}\lambda_1(\Sigma_g,h',A)=0.
$$

\item We will see in the next Subsection \ref{sub:intro:g1} that in genus one the supremum  in the definition of $\Lambda_1(\Sigma_g,[h])$ is actually a maximum, and is achieved by the unique (up to homothety) flat metric in the conformal class $[h]$.  Therefore it makes sense to ask the following question in genus $g\geq 2$:

\nero Is the hyperbolic metric in each class $[h]$ a maximiser for the invariant $\Lambda_1(\Sigma_g,[h])$?

\smallskip
To that end, in Theorem \ref{firstex} we prove the following universal upper bound: for any hyperbolic metric $h$ on $\Sigma_g$ and any harmonic potential $A$, one has:
$$
\abs{h}\lambda_1(\Sigma_g,h,A)\leq C(g)
$$
for a constant $C(g)$ depending only on $g$. It is now clear that the hyperbolic metric in the conformal class of the metric $h_n$ as in 1) cannot be a maximiser for large $n$. Hence, the question is more complicated.  

\item The invariant $\Lambda_1(\Sigma_g)$ is particularly interesting; in genus $1$ it equals $8\pi^2/(3\sqrt 3)$ (see Theorem \ref{intro:flat} below) and is achieved by the regular hexagonal tiling (i.e. flat equilateral torus) in the plane. Note that the optimal conformal class
corresponds to a special Jacobian torus, hence to a lattice in $\mathbb R^{2g}$ with specific geometric properties.
But the immediate question would be: is $\Lambda_1(\Sigma_g)$  positive for all $g$?
\end{enumerate}

\subsection{The magnetic spectrum of flat tori and the invariants in genus one}\label{sub:intro:g1} All the above invariants can actually be computed for $g=1$, that is when $\Sigma_1$ is the $2$-torus. In that case the inequalities of Theorem \ref{cig} are actual equalities. This follows from a simple application of the conformal invariance of the magnetic energy of a function $u$:
$
\int_{\Sigma}\abs{\nabla^Au}_h^2dv_h,
$
which gives easily that, for any metric $h$ on $\Sigma_1$:

\begin{thm}\label{flatisbest} Given any metric $h$ on the $2$-torus $\Sigma_1$ one has
$$
\abs{h}\lambda_1(\Sigma_1,h,A)\leq \abs{\hat h}\lambda_1(\Sigma_1,\hat h,A)
$$
where $\hat h$ is the unique (up to homotheties) flat torus in the conformal class of $h$. Equality holds iff $h$ and $\hat h$ are homothetic. In particular:
$$
\Lambda_1(\Sigma_1,[h],A)=\abs{\hat h}\lambda_1(\Sigma_1,\hat h,A).
$$
\end{thm}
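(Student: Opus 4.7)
The plan rests on two ingredients: conformal invariance of the magnetic Dirichlet energy in dimension two, and the explicit description of the magnetic ground state on a flat torus as a plane wave of constant modulus.

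The first ingredient is the pointwise identity in dimension two, $|\omega|_{e^{2f}h}^{2}\,dv_{e^{2f}h}=|\omega|_{h}^{2}\,dv_{h}$ for any complex 1-form $\omega$, coming from the cancellation of $e^{-2f}$ on 1-forms with $e^{2f}$ on the area element. Applied to $\omega=d^{A}u$ this gives the conformal invariance of $\int_{\Sigma_1}|d^{A}u|^{2}\,dv$ for every admissible $u$. I also record that harmonic 1-forms in dimension two depend only on the conformal class, so by gauge invariance $A$ may be assumed simultaneously $h$- and $\hat h$-harmonic.

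For the second ingredient I would realise $(\Sigma_1,\hat h)$ as a flat torus $\mathbb R^{2}/\Lambda$ and write the harmonic $A=a_{1}\,dx_{1}+a_{2}\,dx_{2}$ with constant coefficients. A direct Fourier computation identifies the magnetic eigenfunctions as the plane waves $e^{2\pi i\scal{k^{\star}}{x}}$, $k^{\star}\in\Lambda^{\star}$, with eigenvalue $|2\pi k^{\star}-a|^{2}$; a ground state $\hat u$ is realised by a minimising $k^{\star}$ and so has constant modulus $|\hat u|^{2}=c>0$ on $\Sigma_1$. This constant-modulus property is what makes the Rayleigh comparison work.

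The inequality is then obtained by inserting $\hat u$ into the Rayleigh quotient for $\lambda_{1}(\Sigma_1,h,A)$. Since $|\hat u|$ is constant, $\int|\hat u|^{2}\,dv_{h}=c\abs{h}$ and $\int|d^{A}\hat u|_{\hat h}^{2}\,dv_{\hat h}=c\abs{\hat h}\,\lambda_{1}(\Sigma_1,\hat h,A)$; combining these with the conformal invariance of the numerator yields
$$\lambda_{1}(\Sigma_1,h,A)\leq\frac{\int|d^{A}\hat u|_{h}^{2}\,dv_{h}}{\int|\hat u|^{2}\,dv_{h}}=\frac{\int|d^{A}\hat u|_{\hat h}^{2}\,dv_{\hat h}}{c\abs{h}}=\frac{\abs{\hat h}}{\abs{h}}\,\lambda_{1}(\Sigma_1,\hat h,A),$$
which after multiplication by $\abs{h}$ is the desired bound. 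The ``in particular'' assertion follows upon taking the supremum over $h'\in[h]$, since the bound is attained at $h'=\hat h$ itself.

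For equality, saturating the Rayleigh inequality forces $\hat u$ to be itself an eigenfunction of the magnetic Laplacian of $(h,A)$. Writing $\hat u=e^{i\phi}$ and computing $\Delta_{A}^{h}e^{i\phi}=e^{i\phi}\bigl(|d\phi-A|_{h}^{2}+i\delta_{h}(d\phi-A)\bigr)$ shows that the eigenvalue equation reduces to $|d\phi-A|_{h}^{2}$ being constant and $d\phi-A$ being $h$-co-closed. The co-closedness is automatic, since $d\phi-A$ is $\hat h$-harmonic hence $h$-harmonic by conformal invariance; but $|d\phi-A|_{h}^{2}=e^{-2f}|d\phi-A|_{\hat h}^{2}$ with the right-hand factor constant and nonzero whenever $\lambda_{1}>0$, and this forces $f$ to be constant, i.e.\ $h$ homothetic to $\hat h$. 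The one genuine subtlety is the degenerate case $A\in\mathcal L^{\star}$, where both sides vanish and the ``iff'' in the equality clause holds only after restricting to non-integral potentials; this is the point that will require the most care in writing up.
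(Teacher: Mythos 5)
Your proposal is correct and follows essentially the same route as the paper: the inequality comes from inserting the constant-modulus flat eigenfunction into the Rayleigh quotient for $(h,A)$ and invoking the two-dimensional conformal invariance of the magnetic Dirichlet energy, exactly as in the paper's proof of Theorem \ref{flatisbest2}. Your equality analysis via the direct computation of $\Delta_{h,A}e^{i\phi}$ is just a minor variant of the paper's use of the conformal transformation law $\Delta_{h,A}=\phi^{-1}\Delta_{\hat h,A}$, and your caveat about the degenerate case $A\in\mathcal L^{\star}$ (where both sides vanish and the ``iff'' needs $\lambda_1(\Sigma_1,\hat h,A)>0$) is a legitimate observation that applies equally to the statement as written in the paper.
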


\medskip

The magnetic spectrum of a flat $d$-dimensional torus with a harmonic potential form $A$ is then computed in Section \ref{sec:tori}; we already remarked that, by gauge invariance, it reduces to the classical Laplace-Beltrami spectrum precisely when $A$ has integral flux on every homology class, i.e. when $A\in 2\pi L^{\star}$; in that case $P_A=\frac{1}{2\pi}A$ has integral periods and consequently:
$$
d_h(P_A,L^{\star})=0.
$$
In general, the magnetic spectrum ${\rm spec}(\Sigma_1,\hat h,A)$ is (up to a multiplicative constant of $4\pi^2$) the set of 
the squares of all distances of $P_A$ to the lattice points in $L^{\star}$ (see \cite{CEIS2022}).
It is worth noticing that the family of eigenfunctions does not depend on $A$, and coincides, for all potentials $A$, with that of the usual Laplace-Beltrami operator; in particular, all eigenfunctions have constant modulus $\abs u=C$ on $\Sigma$. 

\medskip

Let then $(\Sigma_1,\hat h)$ be a flat $2$-torus, identified with the quotient of $\real 2$ by a lattice $L$. The dual lattice $L^{\star}$ is identified precisely with the lattice of harmonic $1$-forms with integral periods. We note that in dimension $2$ one always has that $L^{\star}$ is homothetic to $L$ (up to a congruence in ${\bf O}(2)$): 
$$
L^{\star}\sim \dfrac{1}{\abs{\hat h}}L,
$$
see Subsection \ref{moduli_space}. 
\begin{rem} We recall that $L^{\star}$ is naturally identified with the lattice of integral harmonic $1$-forms (see Lemma \ref{lemma:identification}). Since we are considering the $L^2$-norm on forms, which in this case corresponds to the usual Euclidean norm rescaled by $|\hat h|^{1/2}$ (all harmonic forms on a flat torus have constant length) we deduce that the Jacobian torus of $(\Sigma_1,\hat h)$ is a flat torus of volume $1$ homothetic to $(\Sigma_1,\hat h)$.
\end{rem}

\medskip

It is a classical fact (see for example \cite[\S B.I]{BeGaMa1971}) that every flat torus $\real 2/L$ is homothetic to the one with lattice generated by the vectors:
$$
w_1=(1,0), \quad w_2=(p,q)
$$
with $p\in[0,\frac 12], q\geq p, p^2+q^2\geq 1$. We call $(p,q)$ the {\it parameters} of $\real 2/L$. It follows that the invariants above can all be  expressed in terms of $(p,q)$. In particular:

\begin{thm}\label{intro:flat} Let $(\Sigma_1,h)$ be a $2$-torus, and let $(\Sigma_1,\hat h)$ be the unique flat torus conformal to $h$ (up to homotheties). Assume that $(\Sigma_1,\hat h)$ has parameters $(p,q)$. Then:
$$
\Lambda_1(\Sigma_1,[h])=\dfrac{\pi^2}{q^3}\left((p^2+q^2-p)^2+q^2\right).
$$
Moreover,
$$
\Lambda_1(\Sigma_1)=\dfrac{8\pi^2}{3\sqrt 3},
$$
uniquely attained at the conformal class of the flat $60^0$-rhombic torus 
(which has parameters $(p,q)=(\frac 12,\frac{\sqrt{3}}{2})$).
\end{thm}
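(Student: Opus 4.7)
The first step is a reduction to the flat representative. By Theorem \ref{flatisbest}, the supremum defining $\Lambda_1(\Sigma_1,[h])$ is attained on the (unique up to homothety) flat metric $\hat h$ in the conformal class, so
$$
\Lambda_1(\Sigma_1,[h])=\sup_{A}\abs{\hat h}\,\lambda_1(\Sigma_1,\hat h,A),
$$
the supremum being restricted to harmonic potentials by gauge invariance. For a flat $2$-torus inequality \eqref{ineg 1} is an equality, and the explicit form of the magnetic spectrum recalled in Section \ref{sub:intro:g1} gives $\abs{\hat h}\,\lambda_1(\Sigma_1,\hat h,A)=4\pi^2 d_{\hat h}(P_A,L^{\star})^2$. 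Maximizing over $A$ yields the sharp form of Theorem \ref{cig},
$$
\Lambda_1(\Sigma_1,[h])=4\pi^2\,\mathcal R_{\hat h}(L^{\star})^2,
$$
so the problem reduces to computing the covering radius of the integral lattice in the $L^2$-metric on harmonic forms.

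To carry this out I would identify $(\Sigma_1,\hat h)$ with $\mathbb R^2/L$, $L=\mathbb Z(1,0)+\mathbb Z(p,q)$, so that $\abs{\hat h}=q$. A harmonic form $\omega=a\,dx+b\,dy$ has $L^2$-norm squared $q(a^2+b^2)$, and the integrality conditions $\oint_{w_j}\omega\in\mathbb Z$ give generators $(1,-p/q)$ and $(0,1/q)$ for $L^{\star}$. Rescaling the coefficients by $\sqrt q$ converts the $L^2$-metric into the standard Euclidean one; using the congruence $L^{\star}\sim\frac1qL$ recalled in Section \ref{sub:intro:g1}, the Jacobian torus then becomes the flat volume-one torus spanned by $(1/\sqrt q,0)$ and $(p/\sqrt q,\sqrt q)$. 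Its covering radius equals the circumradius of the Delaunay triangle with vertices $0$, $(1/\sqrt q,0)$, $(p/\sqrt q,\sqrt q)$; intersecting two perpendicular bisectors gives circumcenter $\bigl(1/(2\sqrt q),(p^2+q^2-p)/(2q\sqrt q)\bigr)$, hence
$$
\mathcal R_{\hat h}(L^{\star})^2=\frac{q^2+(p^2+q^2-p)^2}{4q^3},
$$
and multiplication by $4\pi^2$ yields the first formula of the theorem.

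For the value of $\Lambda_1(\Sigma_1)$ I would minimize $F(p,q):=\bigl((p^2+q^2-p)^2+q^2\bigr)/q^3$ over the standard fundamental domain $\mathcal D=\{0\le p\le \tfrac12,\ q\ge\sqrt{1-p^2}\}$. One has $\partial_pF=2(p^2+q^2-p)(2p-1)/q^3$, and on $\mathcal D$ both $p^2+q^2\ge 1>p$ and $2p-1\le 0$, so $F$ is strictly decreasing in $p$; consequently the minimum lies on the edge $p=\tfrac12$. Setting $p=\tfrac12$ and $u=q^2$ reduces the problem to minimizing $((u-\tfrac14)^2+u)/u^{3/2}$, whose critical equation simplifies to $16u^2-8u-3=0$ with relevant root $u=3/4$, i.e.\ $q=\sqrt3/2$. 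Inspecting the remaining boundary pieces ($F(0,q)=q+1/q\ge 2$ on $p=0$, and on the arc $p^2+q^2=1$ one has $F=2(1-p)/(1-p^2)^{3/2}$, which is decreasing on $[0,\tfrac12]$) shows that all other boundary values exceed $8/(3\sqrt3)$. Hence the unique minimizer is the hexagonal corner $(p,q)=(\tfrac12,\tfrac{\sqrt3}{2})$ and $\Lambda_1(\Sigma_1)=8\pi^2/(3\sqrt3)$.

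The main delicate point is bookkeeping the various rescalings relating $L$, its dual $L^{\star}$ and the Jacobian torus under the $L^2$-metric on harmonic forms; once the Jacobian torus is identified with a concrete flat torus of volume one in the plane, the computation of the covering radius and the subsequent optimization on the modular domain are entirely elementary.
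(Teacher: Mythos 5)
Your proposal is correct and follows essentially the same route as the paper: reduce to the flat representative via Theorem \ref{flatisbest2}, use the explicit flat-torus spectrum to identify $\Lambda_1(\Sigma_1,[h])$ with $4\pi^2\mathcal R_{\hat h}(L^{\star})^2$, compute this covering radius as the circumradius of the fundamental triangle of the (rescaled) lattice, and then optimize over the moduli parameters $(p,q)$; your circumcenter, the resulting formula, and the value $\frac{8\pi^2}{3\sqrt 3}$ at $(p,q)=(\frac12,\frac{\sqrt3}{2})$ all check out. Two remarks. First, the step ``covering radius $=$ circumradius of the Delaunay triangle with vertices $0,w_1,w_2$'' is precisely the content of the paper's Lemma \ref{lem_triang} and is not automatic: it requires that this basis triangle be non-obtuse (for an obtuse basis triangle its circumradius can exceed the covering radius, and the Delaunay cells are different), which here follows in one line from the fundamental-domain constraints $0\le p\le\frac12$, $p^2+q^2\ge 1$ (so that $p^2+q^2-p>0$ and all three angles are at most $\pi/2$); you should state this check explicitly rather than only invoking Delaunay terminology, since otherwise the key geometric input is asserted rather than justified. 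Second, for the minimization over moduli you use an explicit calculus argument (monotonicity of $F$ in $p$, then a one-variable reduction on the edge $p=\frac12$ leading to $16u^2-8u-3=0$), whereas the paper argues geometrically that the problem amounts to finding the triangle of given area with smallest circumradius, solved by the equilateral triangle; your computation is if anything more detailed, and it also gives uniqueness directly (strict decrease in $p$ for $p<\frac12$ and strict increase along the edge for $q>\frac{\sqrt3}{2}$), so this difference is purely one of presentation.
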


Here is the scheme of the paper. In Section \ref{sec:asympt} we prove Theorem \ref{thm:asymp}. Section \ref{sec:tori} will be devoted to the computation of the magnetic spectrum of flat tori. In Section \ref{sec:optimization} we consider some optimization problems for the magnetic ground state energy in genus $1$. In Section  \ref{sec:conf_inv} we prove Theorem \ref{flatisbest} and use it, in combination with the results of Section \ref{sec:optimization}, to prove Theorem \ref{intro:flat}. In Section \ref{bruno} we provide proofs of the statements 1), 2) and 3) at the end of Subsection \ref{sub:spec_inv}. In Section \ref{sec:ntori} we comment on $d$-dimensional flat tori, showing that in this case the ground state spectrum determines the metric uniquely. Finally, section \ref{sec:proof_main} is devoted to the proof  of Theorem \ref{thm_sequences}, which implies in particular 
 Theorem \ref{mainthm}.

\section{Proof of Theorem \ref{thm:asymp}: asymptotics as $A\to 0$}\label{sec:asympt}

Let  $(M,h)$ be a $d$-dimensional, compact Riemannian manifold and let $A$ be a co-closed $1$-form on it. Given $r>0$, we consider the ground state energy $\lambda_1(M,h,rA)$ and compute its asymptotic behavior as $r\to 0$. The proof of Theorem \ref{thm:asymp} is divided in two steps: namely, we establish an upper and a lower bound for $\lambda_1(M,h,rA)$.

\nero For simplicity of notation, as $(M,h)$ is fixed, in what follows we will write
$$
\lambda_k(rA)\doteq \lambda_k(M,h,rA).
$$

\medskip
\textbf{Upper bound.} We simply choose as test function for $\lambda_1(rA)$ in the min-max principle \eqref{minmax} the constant function $u=1$. As $\nabla^{rA}1=-irA^{\sharp}$ (the dual vector field of $A$) we get immediately:
$$
\lambda_1(rA)\le r^2 \frac{\Vert A\Vert^2_{L^2(h)}}{{\rm vol}(M,h)}.
$$
\medskip
\textbf{Lower bound.} In the sequel, for smooth complex valued functions $u,v$, we write
$$
q(u,v)\doteq\int_M \langle\nabla^{rA}u,\overline{\nabla^{rA} v}\rangle_h dv_h\,,\ \ \ q(v)\doteq q(v,v)=\int_M \vert\nabla^{rA} v \vert_h^2 dv_h.
$$

We write the constant function $1=u_1+v$ where $u_1$ is an eigenfunction for $\lambda_1(rA)$ and $v$ is orthogonal to $u_1$ in $L^2(h):=L^2(M,dv_h)$, namely, $\int_M u_1\bar v dv_h=0$. We recall that the second eigenvalue $\lambda_2(rA)$ can be characterized as in \eqref{minmax}, where the minimum is taken among all functions which are $L^2$-orthogonal to the first eigenfunction. Choosing $v$ as test function we have
\begin{equation} \label{ineg:2}
    \frac{q(v)}{\Vert v\Vert_{L^2(h)}^2} \ge \lambda_2(rA).
\end{equation}
By Green' formula:
$$
q(u_1,v)=\int_M \langle\nabla^{rA}u_1,\overline{\nabla^{rA} v}\rangle_h dv_h=\lambda_1(rA)\int_{M}u_1\bar v\,dv_h=0
$$
This implies that, again by Green's formula:
$$
q(v)=q(v,v)=q(1,v)=\int_M\langle \nabla^{rA}1,\overline{\nabla^{rA} v}\rangle_h  dv_h = \int_M \langle \Delta^{rA} 1,\bar v\rangle_h dv_h = r^2 \int_M\vert A\vert_h^2\bar vdv_h, 
$$
note that the last equality follows from \eqref{ml} and the hypothesis $\delta(rA)=r\delta A=0$, so that $\Delta^{rA}1=r^2\abs{A}^2_h$. By the Cauchy-Schwarz inequality:
\begin{equation}\label{CS}
q(v)\le r^2 \left(\int_M \vert A\vert^4_h dv_h\right)^{1/2}\left(\int_M |v|^2dv_h\right)^{1/2}.
\end{equation}

By inequality (\ref{ineg:2}), we have $\norm{v}_{L^2(h)}\leq q(v)^\frac 12(\lambda_2(rA))^{-\frac 12}$ hence, substituting in \eqref{CS}:
\begin{equation}\label{rfour}
q(v)\le \frac{\int_M \vert A\vert_h^4 dv_h}{\lambda_2(rA)}\cdot r^4 
\end{equation}

As $1=u_1+v$ and $q(u_1,v)=0$:

$$
\frac{q(1)}{\Vert 1\Vert_{L^2(h)}^2}=\frac{q(u_1)+q(v)}{\Vert u_1 \Vert_{L^2(h)}^2 + \Vert v\Vert^2_{L^2(h)} } = \frac{\lambda_1(rA)\Vert u_1\Vert_{L^2(h)}^2+q(v)}{\Vert u_1 \Vert_{L^2(h)}^2 + \Vert v\Vert_{L^2(h)}^2 }
$$
which we can rewrite

\begin{align}
\lambda_1(rA)\Vert u_1\Vert_{L^2(h)}^2= \frac{q(1)}{{\rm vol}(M,h)}(\Vert u_1 \Vert_{L^2(h)}^2 + \Vert v\Vert_{L^2(h)}^2)-q(v).
\end{align}

Dividing both sides by $\norm{u_1}_{L^2(h)}^2$
we obtain the following lower bound for $\lambda_1(rA)$:

\begin{align}\label{ineg:3}
    \lambda_1(rA) &= \frac{q(1)}{{\rm vol}(M,h)}+\frac{q(1)}{{\rm vol}(M,h)} \frac{\Vert v\Vert_{L^2(h)}^2}{\Vert u_1\Vert_{L^2(h)}^2}-\frac{q(v)}{\Vert u_1 \Vert_{L^2(h)}^2}\\
    &\ge
    \frac{q(1)}{{\rm vol}(M,h)}- \frac{q(v)}{\Vert u_1 \Vert_{L^2(h)}^2}.
\end{align}

We now observe that, as $r\to 0$, we have $\lambda_2(rA)\to \lambda_2$, the second eigenvalue of the usual Laplace-Beltrami operator, which is positive. Then, 
using (\ref{ineg:2}), \eqref{rfour} and the identity $\norm{u_1}_{L^2(h)}^2={\rm vol}(M,h)-\norm{v}_{L^2(h)}^2$, we see that if $\rho>0$ is sufficiently small there are positive constants $c_1,c_2,c_3,C$ such that, for all $r<\rho$:
$$
q(v)\leq c_1 r^4, \quad \norm{v}_{L^2(h)}^2\leq c_2r^4, \quad \norm{u_1}_{L^2(h)}^2\geq c_3, \quad \dfrac{q(v)}{\norm{u_1}_{L^2(h)}^2}\leq Cr^4.
$$
which implies that

\begin{equation}
\begin{aligned}
\lambda_1(rA)&\geq \frac{q(1)}{{\rm vol}(M,h)}-Cr^4\\
&=\frac{\Vert A\Vert_{L^2(h)}^2}{{\rm vol}(M,h)}r^2-Cr^4
\end{aligned}
\end{equation}

which proves the theorem. \qed

\section{The spectrum of a flat torus}\label{sec:tori}

We study here in detail the spectrum of $\Sigma_1$ with a flat metric. Often we will indicate the compact, orientable, differentiable surface of genus $1$ by $T$ (torus) instead of $\Sigma_1$.

\subsection{Flat torus as a quotient}\label{sub:quotient}

A flat torus is the quotient of $\real 2$ by a lattice $L$ with the inherited Euclidean metric. Precisely, if the lattice is generated by the vectors $w_1=(x_1,y_1), w_2=(x_2,y_2)$, then
$$
L=\{mw_1+n w_2: m,n\in\mathbb Z\}
$$
and
$
(T,\hat h)=(\real 2/L,h_E),
$
where $h_E$ is the Euclidean metric. In this section, we will drop the subscript $h_E$ from the Euclidean scalar product and the Euclidean pointwise norm. The parallelogram generated by $(w_1,w_2)$:
$$
\mathcal P=\{aw_1+bw_2: a,b\in [0,1]\}.
$$
is a {\it fundamental domain} of $(T,\hat g)$. We have then $|\hat h|=|\mathcal P|$, namely, the volume of $(T,\hat h)$ is the (Euclidean) volume of the fundamental domain.

A function $f:T\to \mathbb C$ is obtained from a function $\tilde f:\real 2\to \mathbb C$ which is invariant under the lattice, that is $\tilde f(x+p)=\tilde f(x)$  for all $x\in\mathbb R^2$, $p\in L$. In the next subsection we describe
the set of eigenfunctions of $\lambda_1(T,\hat h,A)$ for every harmonic potential $A$. For that, introduce the {\it dual lattice} $L^{\star}$ of $L$
defined as 
$$
L^{\star}=\{p^{\star}\in\real 2: \scal{p^{\star}}{w}\in\mathbb Z\quad\text{for all}\quad w\in L\}.
$$
If $(w_1,w_2)$ is a basis of $L$,  the pair $(w_1^{\star}, w_2^{\star})$ defined by the conditions: 
$$
\scal{w_i^{\star}}{w_j}=\delta_{ij}
$$
is a basis of $L^{\star}$ called the {\it dual basis of $(w_1,w_2)$}.

\subsection{Eigenfunctions of the magnetic Laplacian} 

The family of eigenfunctions is the same as the one of the Laplace-Beltrami operator for any potential $A$. Given $p^{\star}\in L^{\star}$ consider the function $f:\real 2\to\mathbb C$ defined as
$$
f(x)=e^{2\pi i\scal{x}{p^{\star}}}.
$$
One sees immediately that $f$ is invariant under translations defined by the lattice $L$, hence it defines a function on $T$ (which we still denote by $f$). Writing $x=(x_1,x_2)$ and $p^{\star}=(p_1,p_2)$ we can write
$$
f(x_1,x_2)=e^{2\pi i(p_1x_1+p_2x_2)}
$$
and one sees immediately that 
$$
\Delta f=4\pi^2(p_1^2+p_2^2)f=4\pi^2\abs{p^{\star}}^2f
$$
which shows that $4\pi^2\abs{p^{\star}}^2$ is an eigenvalue of the usual Laplacian. 

\smallskip

Introduce a magnetic potential $A$ on $(T,\hat h)$, which corresponds to a parallel $1$-form on the quotient $T$; its pull-back by the canonical projection $\pi:\real 2\to T$
is then a parallel $1$-form on $\real 2$ which can be written $\pi^{\star}A=2\pi(\alpha dx_1+\beta dx_2)$ for some $(\alpha,\beta)\in\real 2$ and which, by a slight abuse of language, we still denote $A$:
\begin{equation}\label{1form}
A=2\pi(\alpha dx_1+\beta dx_2),
\end{equation}
keeping in mind that, for different lattices $L_1,L_2$, the  differential form in \eqref{1form} represents two different potentials on the respective quotient tori $\real2/L_1, \real 2/L_2$.

We denote by $P_A=(\alpha,\beta)\in\real 2$ and call it the {\it position vector} associated with $A$. We observe:

\begin{lemme}\label{lemma:identification} The potential $A$ on $T=\real 2/L$ has integral flux along every loop in $T$ if and only if $P_A$ belongs to the dual lattice $L^{\star}$.
\end{lemme}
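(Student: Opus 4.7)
The approach is direct: use the fact that $A$ is closed (being parallel) so its flux is a homomorphism $H_1(T,\mathbb Z)\to\mathbb R$, and identify $H_1(T,\mathbb Z)$ with $L$ so that computing the flux on a basis reduces to an inner product with $P_A$.

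First I would recall that since $A$ is closed, the integral $\oint_c A$ depends only on the homology class of the loop $c$. Every element of $H_1(T,\mathbb Z)$ is represented by loops obtained as follows: for $w\in L$, let $\tilde c_w:[0,1]\to\real 2$ be the straight segment $\tilde c_w(t)=tw$, and let $c_w=\pi\circ\tilde c_w$ be its projection to $T=\real 2/L$, which is a closed loop since $\pi(0)=\pi(w)$. The map $w\mapsto [c_w]$ is a group isomorphism $L\xrightarrow{\sim} H_1(T,\mathbb Z)$; in particular $(c_{w_1},c_{w_2})$ is a $\mathbb Z$-basis of $H_1(T,\mathbb Z)$.

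Next I would compute the flux explicitly. Writing $w=(w^{(1)},w^{(2)})$, the pull-back $\pi^{\star}A=2\pi(\alpha\,dx_1+\beta\,dx_2)$ evaluated on $\tilde c_w'(t)=w$ gives $A(\tilde c_w'(t))=2\pi(\alpha w^{(1)}+\beta w^{(2)})=2\pi\scal{P_A}{w}$, so
\begin{equation*}
\frac{1}{2\pi}\oint_{c_w}A=\frac{1}{2\pi}\int_0^1 2\pi\scal{P_A}{w}\,dt=\scal{P_A}{w}.
\end{equation*}
Therefore the flux of $A$ along $c_w$ is an integer if and only if $\scal{P_A}{w}\in\mathbb Z$.

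To finish, I would observe that $A$ has integral flux along \emph{every} loop in $T$ if and only if it has integral flux along the basis loops $c_{w_1},c_{w_2}$ (since the flux is a homomorphism on $H_1(T,\mathbb Z)$), which by the computation above amounts to $\scal{P_A}{w_j}\in\mathbb Z$ for $j=1,2$. By $\mathbb Z$-linearity this is in turn equivalent to $\scal{P_A}{w}\in\mathbb Z$ for every $w\in L$, i.e.\ to $P_A\in L^{\star}$ by the very definition of the dual lattice. No step is really an obstacle here; the only mild care required is to make sure the isomorphism $L\simeq H_1(T,\mathbb Z)$ and the closedness of $A$ are invoked so that testing on two loops suffices.
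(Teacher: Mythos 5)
Your proof is correct and follows essentially the same route as the paper, which simply records the key observation that the flux of $A$ along the loop corresponding to $w\in L$ equals $\scal{P_A}{w}$, so integrality of all fluxes is equivalent to $P_A\in L^{\star}$ by the definition of the dual lattice. Your write-up just makes explicit the identification $L\simeq H_1(T,\mathbb Z)$ and the reduction to basis loops via closedness of $A$, which the paper leaves implicit.
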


The proof is immediate from the observation that $A$ has integral flux along the loop corresponding to $w\in L$  if and only if $\scal{P_A}{w}\in\mathbb Z$.


\smallskip

We now compute the magnetic spectrum associated to $A$, starting from the definition \eqref{ml}:
 \begin{equation}
 \Delta_Af=\Delta f+\vert A\vert^2f+2i\langle A,df\rangle.
 \end{equation}
From \eqref{1form} we have immediately
$$
\abs{A}^2=4\pi^2(\alpha^2+\beta^2),
$$
and, as $df=2\pi if(p_1dx_1+p_2dx_2)$: 
$$
2i\scal{A}{df}=2i(4\pi^2if)(\alpha p_1+\beta p_2)=-8\pi^2f(\alpha p_1+\beta p_2).
$$
Therefore
$$
\begin{aligned}
\Delta_Af&=4\pi^2 f\Big(p_1^2+p_2^2+\alpha^2+\beta^2-2(\alpha p_1+\beta p_2)\Big)\\
&=4\pi^2 f\abs{P_A-p^{\star}}^2
\end{aligned}
$$
which shows that, for any $p^{\star}\in L^{\star}$ in the dual lattice, we get the magnetic eigenvalue
$$
\lambda(p^{\star})=4\pi^2 \abs{P_A-p^{\star}}^2.
$$
As the family of eigenfunctions thus obtained is a complete system in $L^2$ (it is the same family of eigenfunctions of the usual Laplacian, see e.g., \cite[\S III.B]{BeGaMa1971}), we obtain the following theorem.

\begin{thm}\label{thm:ev_flat} Assume that the flat torus $(T,\hat h)$ is the quotient of $\real 2$ by the lattice $L$. Then a complete set of eigenfunctions of the magnetic Laplacian with potential $A=2\pi(\alpha dx_1+\beta dx_2)$ is indexed by $p^{\star}\in L^{\star}$ as follows:
$$
f_{p^{\star}}(x)=e^{2\pi i\scal{p^{\star}}{x}},
$$
with associated eigenvalue 
$$
\lambda(p^{\star})=4\pi^2\abs{P_A-p^{\star}}^2,
$$
where $P_A=(\alpha,\beta)$. The spectrum is therefore
$$
{\rm spec}(T,\hat h,A)=\{4\pi^2\abs{P_A-p^{\star}}^2: p^{\star}\in L^{\star}\}.
$$
\end{thm}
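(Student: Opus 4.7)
The proof naturally splits into two independent claims: (i) each exponential $f_{p^\star}$ is a well-defined function on $T$ and is an eigenfunction of $\Delta_A$ with the stated eigenvalue, and (ii) the family $\{f_{p^\star}\}_{p^\star \in L^\star}$ is complete in $L^2(T)$.

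For (i), I would first check that $f_{p^\star}(x)=e^{2\pi i\langle p^\star,x\rangle}$ descends to a function on $T=\mathbb R^2/L$. This is exactly the defining property of the dual lattice: for any $w\in L$ one has $\langle p^\star,w\rangle\in\mathbb Z$, hence $f_{p^\star}(x+w)=f_{p^\star}(x)$. Then I would carry out the direct computation of $\Delta_A f_{p^\star}$ using the expression \eqref{ml}; since $A$ is parallel, $\delta A=0$, and the three terms $\Delta f_{p^\star}$, $\vert A\vert^2 f_{p^\star}$, and $2i\langle A,df_{p^\star}\rangle$ can each be evaluated explicitly in the $(x_1,x_2)$-coordinates. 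Combining them reproduces the perfect square $4\pi^2\lvert P_A-p^\star\rvert^2 f_{p^\star}$. This computation is precisely the one already performed right before the statement.

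For (ii), the key observation is that $\{f_{p^\star}\}_{p^\star\in L^\star}$ is \emph{independent of $A$}: it is the standard Fourier basis of $L^2(T)$. Orthogonality follows from the lattice identity $\int_{\mathcal P}e^{2\pi i\langle p^\star-q^\star,x\rangle}dx=\lvert\mathcal P\rvert\,\delta_{p^\star q^\star}$, and completeness is a classical fact from Fourier analysis on the flat torus (see for instance \cite[\S III.B]{BeGaMa1971}, which is the reference the authors have already cited). Since $\Delta_A$ is self-adjoint on a dense domain and we have produced a complete orthogonal system of eigenfunctions, there can be no further eigenvalues, so the spectrum is exactly $\{4\pi^2\lvert P_A-p^\star\rvert^2:p^\star\in L^\star\}$.

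I expect no serious obstacle in either part: (i) is a one-line verification given the gauge structure, and (ii) reduces to a well-known Fourier-basis statement for which no new argument is needed. The only conceptual point worth emphasizing is that the eigenfunctions of $\Delta_A$ coincide with those of $\Delta$; this reflects the fact that, on a flat torus with a harmonic potential, the magnetic Laplacian is a translation (by $P_A$) of the Laplace--Beltrami operator in frequency space, so its spectrum is merely the squared-distance spectrum from $P_A$ to the dual lattice $L^\star$.
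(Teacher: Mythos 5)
Your proposal is correct and follows essentially the same route as the paper: the explicit computation of $\Delta_A f_{p^{\star}}$ via \eqref{ml} (using $\delta A=0$ and the periodicity coming from the defining property of $L^{\star}$), followed by the observation that the family is the $A$-independent Fourier basis, whose completeness is quoted from \cite[\S III.B]{BeGaMa1971}, exactly as the authors do. Nothing is missing.
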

As a consequence we get the following.

\begin{cor}\label{cor:ev_flat} For the flat torus $(T,\hat h)$ defined by the lattice $L$ the lowest eigenvalue is
$$
\lambda_1(T,\hat h, A)=4\pi^2 \inf_{p^{\star}\in L^{\star}}\abs{P_A-p^{\star}}^2.
$$
\end{cor}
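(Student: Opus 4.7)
The statement is essentially a direct corollary of Theorem \ref{thm:ev_flat}, so the plan is very short. My approach would be to observe that Theorem \ref{thm:ev_flat} already identifies the full spectrum of $\Delta_A$ as
$$
{\rm spec}(T,\hat h,A)=\{4\pi^2\abs{P_A-p^{\star}}^2 : p^{\star}\in L^{\star}\},
$$
together with a complete family of eigenfunctions $\{f_{p^\star}\}_{p^\star\in L^\star}$ indexed by the dual lattice. Once one knows that these are \emph{all} the eigenvalues (counted with multiplicity), the smallest one is nothing but the infimum of the set on the right, which gives the claimed formula.

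The one step that needs a sentence of justification is that this infimum is attained and equals $\lambda_1$. For this I would argue: since $L^{\star}\subset\mathbb R^2$ is a discrete lattice, the function $p^{\star}\mapsto \abs{P_A-p^{\star}}^2$ has only finitely many values below any prescribed bound, in particular the infimum is achieved at the lattice point(s) nearest to $P_A$, so it is really a minimum. Completeness of the family $\{f_{p^\star}\}$ in $L^2(T,dv_{\hat h})$ (already invoked in the proof of Theorem \ref{thm:ev_flat}) guarantees that every eigenvalue of $\Delta_A$ is of the form $4\pi^2\abs{P_A-p^{\star}}^2$, hence no smaller eigenvalue exists.

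Combining these two observations,
$$
\lambda_1(T,\hat h,A)=\min_{p^\star\in L^\star}4\pi^2\abs{P_A-p^\star}^2=4\pi^2\inf_{p^\star\in L^\star}\abs{P_A-p^\star}^2,
$$
which is the claim. There is no genuine obstacle here; the only thing to be careful about is not confusing ``infimum'' with ``minimum'', but the discreteness of $L^\star$ resolves this immediately. The corresponding ground state eigenfunction is $f_{p_0^\star}$ where $p_0^\star\in L^\star$ realizes the minimal distance to $P_A$ (unique up to ties, which may occur when $P_A$ lies on the perpendicular bisector between two or more lattice points).
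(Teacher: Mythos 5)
Your proposal is correct and matches the paper's route exactly: the corollary is read off directly from Theorem \ref{thm:ev_flat}, which the paper presents with no further argument beyond ``as a consequence.'' Your extra remarks on the completeness of the eigenfunction family and the discreteness of $L^{\star}$ (so the infimum is a minimum) are sound and merely make explicit what the paper leaves implicit.
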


\begin{figure}[H]
\includegraphics[width=0.8\textwidth]{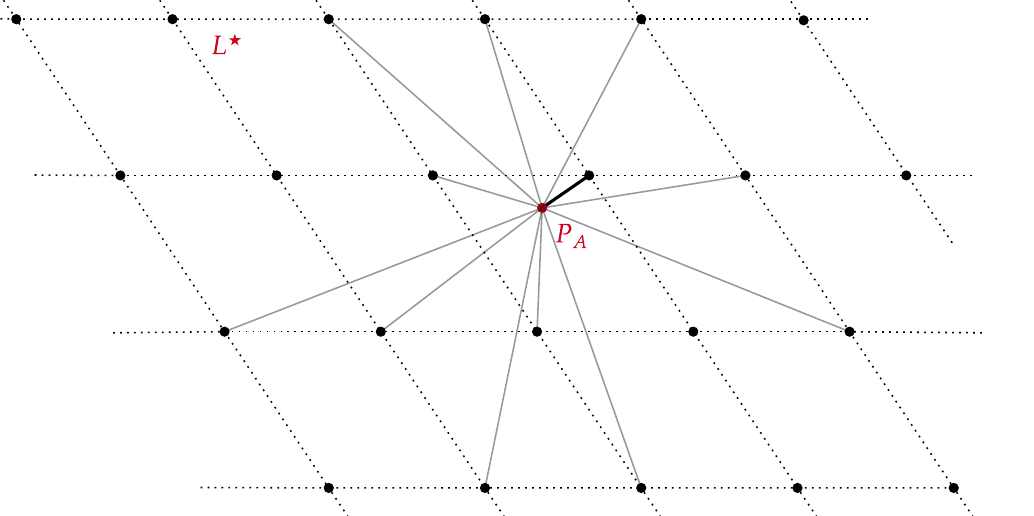}
\caption{The spectrum of a flat torus for a potential $A=2\pi(\alpha\,dx_1+\beta\,dx_2)$ is given by the squared distances  of $P_A=(\alpha,\beta)$ from the points of $L^{\star}$ (i.e., the squared lengths of the segments in the picture), up to a factor $4\pi^2$. The first eigenvalue corresponds to the shortest distance, i.e., to the length of the bold segment in the picture.}
\label{fig:distances}
\end{figure}

\begin{figure}[H]
\includegraphics[width=0.8\textwidth]{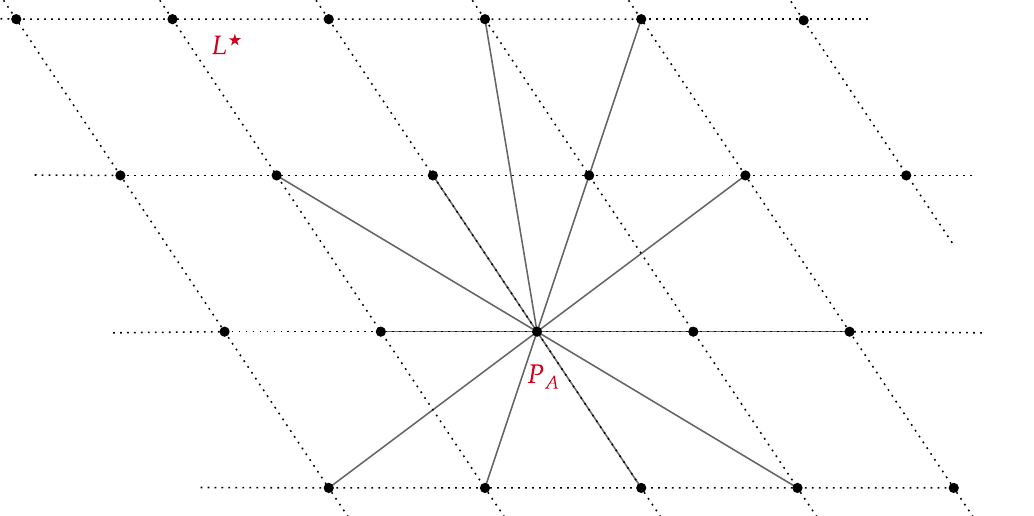}
\caption{When $P_A\in L^{\star}$ we get the usual Laplace-Beltrami spectrum.}
\label{fig:distances2}
\end{figure}

We remark the following facts:
\begin{itemize}
\item The eigenfunctions do not depend on the magnetic potential $A$: they are always the same.
\item The classical Laplacian spectrum is obtained precisely when $P_A\in L^{\star}$ is a point of the dual lattice; in that case $\lambda_1=0$. In other words, the magnetic potential has integer fluxes and hence is gauge equivalent to $A=0$.
\item More generally, the spectrum does not change if we replace $P_A$ with $P_A+p^{\star}$, where $p^{\star}$ is any other point of 
$L^{\star}$. This gives a geometric meaning to the gauge invariance of the magnetic spectrum: if two potentials have fluxes which differ by integers, they give rise to the same magnetic spectrum.
\item  Up to the multiplicative factor $4\pi^2$, the magnetic spectrum associated to the harmonic form $A$  is given by the squares of the distances of $P_A$ to the points of the dual lattice $\mathcal L^{\star}$.  Seen on the quotient flat torus, the magnetic spectrum determines the set of the lengths of all geodesics joining the points $\pi(P_0)$ and $\pi(P_A)$ of $T$, where $\pi$ is the projection onto $T$ and $0$ is the zero-form (this reduces to the marked length spectrum when $\pi(P_A)=\pi(P_0)$, which corresponds to the Laplace-Beltrami operator).
\end{itemize}

\subsection{Moduli space of flat tori}\label{moduli_space}

The following fact  is well-known, see e.g., \cite[\S III.B]{BeGaMa1971}.

\begin{lemme}\label{lem:moduli}
    Up to homotheties, congruences in ${\bf O}(2)$ and change of basis in ${\bf SL}(2,{\mathbb Z})$, the lattice $L$ admits a basis 
$w_1=(1,0), w_2=(p,q)$  with $p\in [0,\frac 12]$, $q\geq p$ and $p^2+q^2\geq 1$. 
\end{lemme}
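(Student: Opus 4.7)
This is the classical reduction of a planar lattice to its Minkowski-reduced basis. The plan is to expend the available group actions (homothety, $\mathbf{O}(2)$, $\mathbf{SL}(2,\mathbb{Z})$) to normalize two successive shortest vectors.

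First, I would choose $w_1$ to be a shortest nonzero vector in $L$; such a vector exists because $L$ is discrete. Applying a homothety to rescale $|w_1|=1$ and a rotation in $\mathbf{SO}(2)\subset\mathbf{O}(2)$ to align $w_1$ with the positive $x$-axis, we obtain $w_1=(1,0)$. Next I would pick $w_2$ to be a shortest element of $L\setminus\mathbb{Z}w_1$, and write $w_2=(p,q)$; since $w_2\notin\mathbb{R}w_1$, we have $q\neq 0$. Composing with the reflection $(x,y)\mapsto(x,-y)$ (which lies in $\mathbf{O}(2)$ and fixes $w_1$), we may assume $q>0$. Using the basis change $w_2\mapsto w_2-kw_1$ for an appropriate $k\in\mathbb{Z}$ (an $\mathbf{SL}(2,\mathbb{Z})$ transformation), we arrange $p\in[-\tfrac12,\tfrac12]$; applying the reflection $(x,y)\mapsto(-x,y)$ paired, if necessary, with the basis change $(w_1,w_2)\mapsto(-w_1,-w_2)$ of determinant $1$, we force $p\in[0,\tfrac12]$.

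The inequality $p^2+q^2\geq 1$ then expresses exactly the minimality $|w_2|\geq|w_1|=1$. The condition $q\geq p$ follows automatically and is in fact a consequence of the previous constraints: since $p\leq 1/2$, the inequality $p^2+q^2\geq 1$ gives $q^2\geq 3/4$, so $q\geq\sqrt{3}/2>1/2\geq p$.

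The step I expect to be the main obstacle is verifying that the pair $(w_1,w_2)$ constructed above is genuinely a $\mathbb{Z}$-basis of $L$, not merely the generator of a full-rank sublattice. The argument I have in mind is by contradiction: if some $w\in L$ is not in $\mathbb{Z}w_1+\mathbb{Z}w_2$, write $w=aw_1+bw_2$ with $a,b\in\mathbb{R}$ and, subtracting integer multiples of $w_1$ and $w_2$, reduce to $a,b\in[-\tfrac12,\tfrac12]$ with $(a,b)\neq(0,0)$. If $b=0$, then $0<|w|=|a|\leq\tfrac12<1=|w_1|$, contradicting the minimality of $w_1$. If $b\neq 0$, then $w\notin\mathbb{Z}w_1$, so the minimality of $w_2$ among elements of $L\setminus\mathbb{Z}w_1$ forces $|w|^2\geq|w_2|^2=p^2+q^2$; a direct estimate of
\[
|w|^2=a^2+2ab\,p+b^2(p^2+q^2)
\]
using $|a|,|b|\leq\tfrac12$ and $0\leq p\leq\tfrac12$ yields the opposite strict inequality, a contradiction. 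This closes the argument and gives the claimed normal form.
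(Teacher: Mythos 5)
The paper does not actually prove this lemma; it simply cites \cite[\S III.B]{BeGaMa1971}, so your reduction argument is a genuine addition rather than a parallel of the paper's text. In substance your proof is the classical Lagrange--Gauss/Minkowski reduction and it is essentially correct: the choice of $w_1$ as a shortest vector, the choice of $w_2$ as shortest in $L\setminus\mathbb{Z}w_1$, the observation that $p^2+q^2\geq 1$ encodes $|w_2|\geq|w_1|$, the redundancy of $q\geq p$, and the concluding sublattice argument (whose estimate indeed closes: with $|a|,|b|\leq\frac12$, $0\leq p\leq\frac12$ one gets $|w|^2\leq\frac12+\frac14(p^2+q^2)<p^2+q^2$ because $p^2+q^2\geq 1$) all work. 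Two small facts are used silently and deserve a line each: $L\cap\mathbb{R}w_1=\mathbb{Z}w_1$ (otherwise a fractional multiple of $w_1$ would beat its minimality), which is what justifies $q\neq 0$; and the fact that after the shift $w_2\mapsto w_2-kw_1$ the new vector is still of minimal length in $L\setminus\mathbb{Z}w_1$ (it is, since reducing the first coordinate to $[-\frac12,\frac12]$ cannot increase the length), which is what your final appeal to ``minimality of $w_2$'' requires. Alternatively, choose $w_2$ minimal and note that minimality already forces $|p|\leq\frac12$, since otherwise $w_2\mp w_1$ would be shorter.

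The one step that fails as literally written is the normalization $p\geq 0$. Applying the reflection $(x,y)\mapsto(-x,y)$ sends the basis to $((-1,0),(-p,q))$, and following it with the determinant-one change $(u_1,u_2)\mapsto(-u_1,-u_2)$ yields $((1,0),(p,-q))$: the sign of $p$ is unchanged and $q$ has become negative, so restoring $q>0$ by the reflection $(x,y)\mapsto(x,-y)$ returns you exactly to the starting basis. The insistence on determinant $+1$ is the culprit, and it is also unnecessary: the lemma asserts that the transformed lattice \emph{admits} a basis of the stated form, and any unimodular change of basis (determinant $\pm1$) produces a basis of the same lattice. So the correct move is simply: if $p<0$, replace $L$ by its image $S(L)$ under $(x,y)\mapsto(-x,y)$ (allowed, since $S\in\mathbf{O}(2)$) and take as basis $(-Sw_1,Sw_2)=\bigl((1,0),(-p,q)\bigr)$, negating only the first vector. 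Since $S$ is an isometry, $(1,0)$ is still a shortest vector of $S(L)$ and $(-p,q)$ is still shortest in $S(L)\setminus\mathbb{Z}(1,0)$, so all the metric conclusions you derive afterwards persist. With that repair the proof is complete.
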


\begin{figure}[H]
\centering
\includegraphics[width=0.8\textwidth]{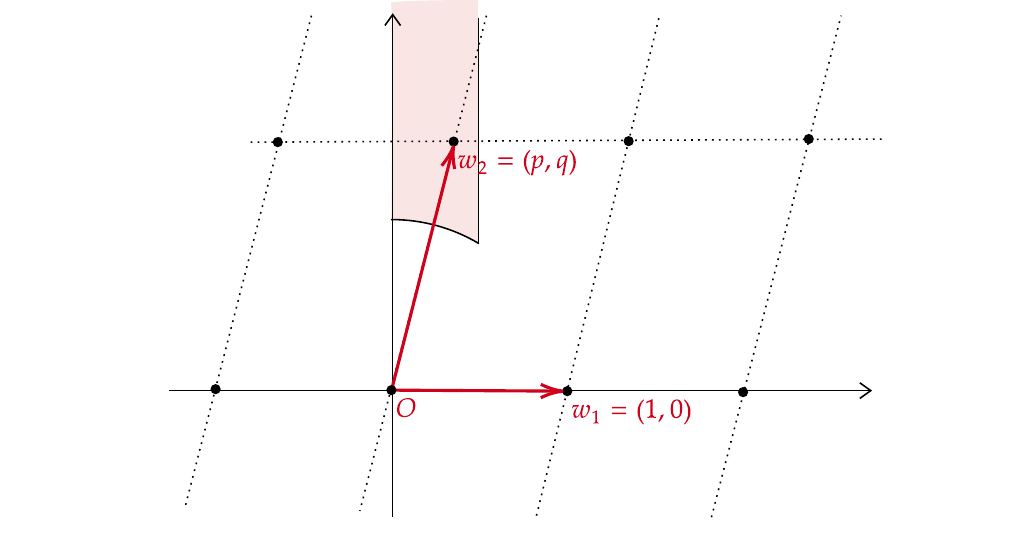}
\caption{Moduli space of flat tori.}
\label{fig:moduli}
\end{figure}

Now, if $L$ has basis $w_1=(1,0), w_2=(p,q)$,  the dual basis $(w_1^{\star},w_2^{\star})$ of $L^{\star}$  defined in Subsection \ref{sub:quotient} is given by 
$$
w_1^{\star}=(1,-\frac{p}{q}), \quad w_2^{\star}=(0,\frac{1}{q}).
$$
However it is convenient to use the basis of $L^{\star}$  given by
\begin{equation}\label{acute_basis}
v_1^{\star}=(1,\frac{1-p}{q}), \quad v_2^{\star}=(0,\frac{1}{q}).
\end{equation}
It is indeed elementary to check that $(v_1^{\star},v_2^{\star})$ is another basis of $L^{\star}$ and that the parallelogram generated by $(w_1,w_2)$ is homothetic to the one generated by $(v_1^{\star},v_2^{\star})$ (up to a congruence in ${\bf O}(2)$): in fact, they have the same angles and proportional sides. Hence, the lattice $L^{\star}$ is homothetic to  $L$ (up to a congruence in ${\bf O}(2)$). Moreover, the area of the parallelogram generated by $(w_1,w_2)$ is $q$, while the area of the parallelogram generated by $(v_1^{\star},v_2^{\star})$ is $\frac{1}{q}$. This shows that $L^{\star}\sim\frac{1}{|\hat h|}L$, where $|\hat h|=|\mathcal P|=q$ is the volume of the fundamental domain of the torus generated by $L$.

\smallskip

Although we will not use it in the paper, it is perhaps interesting to express the spectrum of a flat torus in terms of the fluxes of the $1$-form $A$ with respect to a homology basis (these fluxes in fact determine the $1$-form $A$). In particular, we will take the loops corresponding to the normal basis $(w_1,w_2)$, and we will call the corresponding fluxes $\Phi_1,\Phi_2$. We have
\begin{equation}\label{fluxes}
\twosystem{\Phi_1=\langle(\alpha,\beta),(1,0)\rangle=\alpha}{\Phi_2=\langle(\alpha,\beta),(p,q)\rangle=\alpha p+\beta q}
\end{equation}
We can  write $(\alpha,\beta)$ in term of the fluxes $\Phi_1,\Phi_2$ and of the parameters $p,q$ of the flat torus in the moduli space
$$
\twosystem{\alpha=\Phi_1}{\beta=\langle(\alpha,\beta),(p,q)\rangle=-\frac{p}{q}\Phi_1+\frac{1}{q}\Phi_2}\quad\text{and}\quad \twosystem{\alpha=\Phi_1}{\beta=-\frac{p}{q}\Phi_1+\frac{1}{q}\Phi_2}.
$$
We have the following.
\begin{cor}\label{cor:eig_flux}
$$
|\hat h|\lambda_1(T,\hat h, A)=4\pi^2 \min_{p^{\star}=(p_1,p_2)\in L^{\star}}d_{\hat h}(P_A,L^{\star})^2=4\pi^2 \min_{p^{\star}=(p_1,p_2)\in L^{\star}}\left(q(\Phi_1-p_1)^2+\frac{(p\Phi_1-\Phi_2+qp_2)^2}{q}\right).
$$
\end{cor}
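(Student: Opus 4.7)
The proof is essentially a two-line unwinding of previous work, followed by a linear change of coordinates; there is no hard step. Here is how I would carry it out.

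The plan is to start from Corollary \ref{cor:ev_flat}, which already gives
$\lambda_1(T,\hat h,A)=4\pi^2\min_{p^{\star}\in L^{\star}}|P_A-p^{\star}|^2$ where $|\cdot|$ is the Euclidean norm on $\mathbb R^2$. I only need to (i) identify the correct factor of $|\hat h|$ to convert the Euclidean distance into the intrinsic $L^2$-distance $d_{\hat h}$ on harmonic 1-forms, and (ii) rewrite the Euclidean distance in the flux coordinates $(\Phi_1,\Phi_2)$ by using \eqref{fluxes}.

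For the first equality: the fundamental parallelogram spanned by $w_1=(1,0)$ and $w_2=(p,q)$ has Euclidean area $q$, so $|\hat h|=q$. Any harmonic 1-form on a flat torus has constant pointwise length, so for $P_A=\alpha\,dx_1+\beta\,dx_2$ we have $\|P_A\|^2_{L^2(\hat h)}=|\hat h|(\alpha^2+\beta^2)$; by Lemma \ref{lemma:identification} the lattice $L^{\star}$ of integral harmonic forms is, under the identification $P\leftrightarrow(\alpha,\beta)$, precisely the dual lattice of $L$ in $\mathbb R^2$. Therefore
\[
d_{\hat h}(P_A,L^{\star})^2=|\hat h|\,\min_{p^{\star}\in L^{\star}}|P_A-p^{\star}|^2,
\]
and combining this with Corollary \ref{cor:ev_flat} yields the first equality $|\hat h|\lambda_1(T,\hat h,A)=4\pi^2 d_{\hat h}(P_A,L^{\star})^2$.

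For the second equality: I write a generic $p^{\star}\in L^{\star}$ in Euclidean coordinates as $(p_1,p_2)$, and use \eqref{fluxes}, which inverts to $\alpha=\Phi_1$ and $\beta=(\Phi_2-p\Phi_1)/q$. Then
\[
q|P_A-p^{\star}|^2=q(\Phi_1-p_1)^2+q\!\left(\frac{\Phi_2-p\Phi_1}{q}-p_2\right)^{\!2}=q(\Phi_1-p_1)^2+\frac{(p\Phi_1-\Phi_2+qp_2)^2}{q},
\]
where the last step uses $(-a)^2=a^2$. Minimizing over $p^{\star}=(p_1,p_2)\in L^{\star}$ gives the claimed expression.

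The only point that requires any care is to be consistent about which distance is which: the normalization $P_A=\frac{1}{2\pi}A$ and the fact that on a flat torus the $L^2$-norm on harmonic 1-forms equals $|\hat h|^{1/2}$ times the Euclidean norm of the position vector. Once this is tracked correctly, the rest is arithmetic.
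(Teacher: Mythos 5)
Your proof is correct and is essentially the argument the paper intends: combine Corollary \ref{cor:ev_flat} with the observation that the $L^2(\hat h)$-distance on harmonic forms is $|\hat h|^{1/2}$ times the Euclidean distance of position vectors (so $d_{\hat h}(P_A,L^{\star})^2=|\hat h|\min_{p^{\star}}|P_A-p^{\star}|^2$, with $|\hat h|=q$), then substitute $\alpha=\Phi_1$, $\beta=(\Phi_2-p\Phi_1)/q$ from \eqref{fluxes}. Your bookkeeping of the normalization $P_A=\frac{1}{2\pi}A$ and the identification of $L^{\star}$ via Lemma \ref{lemma:identification} matches the paper's.
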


We conclude this section by remarking that one can compute the magnetic spectrum of a $d$-dimensional flat torus $(T^d,\hat h)$: it is $\mathbb R^d/L$ where $L$ is a lattice of full rank. The dual lattice $L^{\star}$ is defined exactly as for $d=2$ and we have the analogous of Theorem \ref{thm:ev_flat}:
\begin{thm}
Assume that the flat torus $(T^d,\hat h)$ is the quotient of $\mathbb R^d$ by the lattice $L$. Then a complete set of eigenfunctions of the magnetic Laplacian with potential $A=2\pi(\sum_{i=1}^d\alpha_idx_i)$ is given by $\{e^{2\pi i\langle p^{\star},x\rangle}:p^{\star}\in L^{\star}\}$, and the spectrum is given by
$$
{\rm spec}(T^d,\hat h,A)=\left\{4\pi^2|P_A-p^{\star}|^2:p^{\star}\in L^{\star}\right\},
$$
where $P_A=(\alpha_1,...,\alpha_d)$.
\end{thm}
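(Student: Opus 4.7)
The plan is to run the argument used for $d=2$ (the proof of Theorem \ref{thm:ev_flat}) in arbitrary dimension; nothing new beyond bookkeeping is required. First I would check that the functions $f_{p^\star}(x) = e^{2\pi i\langle p^\star, x\rangle}$ descend to $T^d = \mathbb R^d/L$: by the defining property of the dual lattice, $\langle p^\star, w\rangle \in \mathbb Z$ for every $w\in L$, so $f_{p^\star}(x+w) = f_{p^\star}(x)$ and the function is well-defined and smooth on $T^d$.

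Next I would apply the magnetic Laplacian formula \eqref{ml}. Since the potential $A = 2\pi\sum_{i=1}^d \alpha_i\, dx_i$ is parallel, it is both closed and co-closed, so $\delta A = 0$ and the formula reduces to
\[
\Delta_A f = \Delta f + |A|^2 f + 2i\langle A, df\rangle_{\hat h}.
\]
A direct computation gives $\Delta f_{p^\star} = 4\pi^2 |p^\star|^2 f_{p^\star}$, $|A|^2 = 4\pi^2 |P_A|^2$ and $df_{p^\star} = 2\pi i f_{p^\star}\sum_i p^\star_i\,dx_i$, whence $2i\langle A, df_{p^\star}\rangle_{\hat h} = -8\pi^2 \langle P_A, p^\star\rangle f_{p^\star}$. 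Summing,
\[
\Delta_A f_{p^\star} = 4\pi^2\bigl(|p^\star|^2 - 2\langle P_A, p^\star\rangle + |P_A|^2\bigr) f_{p^\star} = 4\pi^2 |P_A - p^\star|^2 f_{p^\star},
\]
so every $f_{p^\star}$ is an eigenfunction with the claimed eigenvalue.

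Finally, I would argue completeness: the family $\{f_{p^\star}\}_{p^\star\in L^\star}$ is, by the classical Fourier theorem on a flat torus, a complete orthogonal system in $L^2(T^d, dv_{\hat h})$ (see e.g.\ \cite[\S III.B]{BeGaMa1971}). Since we have exhibited an orthonormal basis of eigenfunctions of the self-adjoint operator $\Delta_A$, this basis exhausts the spectrum, giving
\[
{\rm spec}(T^d,\hat h,A) = \{4\pi^2 |P_A - p^\star|^2 : p^\star \in L^\star\}.
\]
There is no real obstacle in this proof; the only subtle point — and the step most deserving of a sentence of justification — is that the eigenfunctions of $\Delta_A$ can be taken to coincide with those of the ordinary Laplace--Beltrami operator, which is a consequence of the parallel structure of $A$ (equivalently, of the fact that multiplication by $f_{p^\star}$ intertwines the two operators up to a constant shift of the wave vector).
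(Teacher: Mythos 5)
Your proposal is correct and follows essentially the same route as the paper: the paper treats the $d$-dimensional case by simply repeating the genus-one computation of Section \ref{sec:tori} (lattice-invariance of $f_{p^{\star}}$, the pointwise identity $\Delta_A f_{p^{\star}}=4\pi^2\abs{P_A-p^{\star}}^2 f_{p^{\star}}$ using \eqref{ml} with $\delta A=0$, and completeness of the exponential family as in \cite[\S III.B]{BeGaMa1971}), which is exactly what you do. No gaps.
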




\section{Optimization problems for the lowest eigenvalue of flat tori}\label{sec:optimization}

Let $(T,\hat h)$ be a flat $2$-torus. Recall that, if $A=2\pi(\alpha dx+\beta dy)$ and $P_A=(\alpha,\beta)$, then
$$
\lambda_1(T,\hat h,A)=4\pi^2\min_{p^{\star}\in L^{\star}}\abs{P_A-p^{\star}}^2
$$
where $P_A=\dfrac{1}{2\pi}A$. We identify $P_A$ with the point $(\alpha,\beta)\in\real 2$. 
We ask: what is the optimal potential $1$-form $A$, the one which maximizes the normalized lowest eigenvalue? In other words, we want to study the quantity (which is scale-invariant):
$$
\Lambda_1(T,\hat h)\doteq \sup_{A\in {\rm Har}(\hat h)}\abs{\hat h}
\lambda_1(T,\hat h,A)
$$
Clearly, the optimal potential form $A$ corresponds to a point $\hat P_A\in\real 2$ which is at maximum distance to the dual lattice $L^{\star}$, see Figure \ref{fig:optim}. Here $L$ has basis $w_1=(1,0), w_2=(p,q)$. It will be crucial to choose the basis $(v_1^{\star},v_2^{\star})$ defined in \eqref{acute_basis} for the dual lattice $L^{\star}$:
$$
v_1^{\star}=(1,\frac{1-p}{q}), \quad v_2^{\star}=(0,\frac{1}{q}).
$$
With this choice, the triangle $\tau$ generated by $v_1^{\star},v_2^{\star}$ is {\it acute}, namely, all its angles are smaller than $\frac{\pi}{2}$.
We have the following Lemma, the proof of which is postponed at the end of this section.

\begin{defi}
Given an acute triangle $\tau$, its {\rm circumradius} is the radius of the circle (called {\rm circumcircle} passing through the three vertices of $\tau$. The circumradius is also the maximum distance of a point in the triangle to the set of the three vertices. The center of the circumcircle is called {\rm circumcenter}.
\end{defi}

\begin{lemme}\label{lem_triang}
 The point $\hat P_A$ of the triangle $\tau$ generated by $v_1^{\star},v_2^{\star}$ at maximum distance to $L^{\star}$ is  the circumcenter of $\tau$, as in Figure \ref{fig:optim}.
It has coordinates 
$$
\hat P_A=\left(\dfrac{p^2+q^2-p}{2q^2},\frac{1}{2q}\right).
$$
\end{lemme}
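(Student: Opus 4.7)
My plan is to show that for $P\in\tau$ the nearest point of $L^\star$ is one of the three vertices $\{0,v_1^\star,v_2^\star\}$, and then invoke the classical fact that for an acute triangle the point maximizing the minimum distance to the vertices is the circumcenter; a short linear computation will then yield the coordinates. First I would check that $\tau$ (with vertices $0,v_1^\star,v_2^\star$) has all three interior angles at most $\pi/2$ under the constraints $p\in[0,1/2]$, $q\geq p$, $p^2+q^2\geq 1$. Computing the inner products of the two edge vectors emanating from each vertex gives
\[
v_1^\star\cdot v_2^\star=\frac{1-p}{q^2},\qquad |v_1^\star|^2-v_1^\star\cdot v_2^\star=1-\frac{p(1-p)}{q^2},\qquad |v_2^\star|^2-v_1^\star\cdot v_2^\star=\frac{p}{q^2}.
\]
The first and third quantities are nonnegative since $p\in[0,1/2]$, and the middle one uses $q^2\geq 1-p^2\geq p(1-p)$, the last inequality reducing to $1-p\geq 0$. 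Hence $\tau$ is acute (right-angled at $v_2^\star$ when $p=0$), and its circumcenter lies in $\tau$.

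Next I would establish the reduction: $d_{\hat h}(P,L^\star)=\min\{|P|,|P-v_1^\star|,|P-v_2^\star|\}$ for every $P\in\tau$. Decompose $\tau$ into three subregions $R_0,R_1,R_2$ by the perpendicular bisectors of its three sides, which meet at the circumcenter because $\tau$ is acute. Take $P\in R_0$ and write $P=sv_1^\star+tv_2^\star$ with $s,t\geq 0$ and $s+t\leq 1$: since $v_1^\star\cdot v_2^\star\geq 0$ one has $P\cdot v_i^\star\geq 0$, and since $P\in R_0$ one has $P\cdot v_i^\star\leq|v_i^\star|^2/2$. For $\ell=mv_1^\star+nv_2^\star\in L^\star$ the condition $|P|\leq|P-\ell|$ reads $|\ell|^2\geq 2P\cdot\ell$. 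A short case analysis using the bounds on $P\cdot v_i^\star$ together with the acuteness inequalities verifies this for the relevant neighbours $\pm v_1^\star,\pm v_2^\star,\pm(v_1^\star-v_2^\star)$ and $v_1^\star+v_2^\star$; for lattice vectors with $|\ell|$ large enough compared to $\max(|v_1^\star|,|v_2^\star|)$, Cauchy--Schwarz gives $2P\cdot\ell\leq 2|P||\ell|\leq|\ell|^2$, so only finitely many cases need inspection. The regions $R_1,R_2$ are handled symmetrically.

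Granting the reduction, $\max_{P\in\tau}d_{\hat h}(P,L^\star)$ equals the maximum of $\min\{|P|,|P-v_1^\star|,|P-v_2^\star|\}$ over $P\in\tau$, which for an acute triangle is attained uniquely at the circumcenter $C$ with value equal to the circumradius. One then finds $C=(x,y)$ by solving $|C|^2=|C-v_1^\star|^2=|C-v_2^\star|^2$: with $v_2^\star=(0,1/q)$ the equation $|C|^2=|C-v_2^\star|^2$ gives $y=1/(2q)$, and substituting into $|C|^2=|C-v_1^\star|^2$ with $v_1^\star=(1,(1-p)/q)$ yields $x=(p^2+q^2-p)/(2q^2)$, as claimed. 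I expect the reduction step to be the main obstacle; this is precisely where the careful choice \eqref{acute_basis} of an acute basis of $L^\star$ is essential, since with the original dual basis $(w_1^\star,w_2^\star)$ the corresponding triangle need not be acute and the nearest lattice point to a generic $P$ in that triangle could lie outside $\{0,w_1^\star,w_2^\star\}$.
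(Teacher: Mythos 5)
Your overall strategy is viable, the acuteness verification is correct, and the final computation of the circumcenter's coordinates is right; but the step you yourself identify as the crux — that every $P\in\tau$ has its nearest point of $L^{\star}$ among the three vertices — is not actually established, and the mechanism you propose for it does not work as stated. Your coverage argument is "check the listed neighbours $\pm v_1^{\star},\pm v_2^{\star},\pm(v_1^{\star}-v_2^{\star}),v_1^{\star}+v_2^{\star}$, and kill all longer vectors by Cauchy--Schwarz". The Cauchy--Schwarz step needs $|\ell|\ge 2|P|$, and $\sup_{P\in\tau}|P|=|v_1^{\star}|\ge 1$, whereas $|v_2^{\star}|=1/q$ can be arbitrarily small; so for $q$ moderately large the vectors $2v_2^{\star},3v_2^{\star},\dots,kv_2^{\star}$ (with $k$ up to order $q$), $v_1^{\star}-2v_2^{\star}$, etc., are neither in your list nor long enough for the tail bound, and they are simply not checked. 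Each of these cases is easy individually, but "only finitely many cases need inspection" is not uniform in $(p,q)$, so the case analysis as sketched does not exhaust $L^{\star}$. To make a finite check legitimate you would need an extra ingredient, e.g.\ the standard fact that membership in the Voronoi cell $V(0)$ of a planar lattice is certified by the (at most six) Voronoi-relevant vectors, which here are $\pm v_2^{\star},\pm(v_1^{\star}-v_2^{\star}),\pm v_1^{\star}$ because $(v_2^{\star},\,v_1^{\star}-v_2^{\star})$ is a basis with $v_2^{\star}\cdot(v_1^{\star}-v_2^{\star})=-p/q^{2}\le 0$; that fact is not quoted or proved in your proposal. Be careful also not to lean on a "classical" Delaunay-type statement here: for general point configurations an acute triangle with empty circumdisk need {\it not} be covered by the Voronoi cells of its vertices (thin acute slivers give counterexamples), so the covering claim genuinely uses the lattice structure and must be argued.

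For comparison, the paper avoids the covering claim altogether: it shows directly that the open circumdisk of the fundamental triangle contains no lattice points (the height on the shortest side is at least the circumradius, which confines possible lattice points to two lines, and an angle argument using acuteness excludes $w_1+w_2$ from the open disk), and separately that every point of the plane can be moved into $\tau$ and is within the circumradius of a vertex; together these show the circumcenter realizes the maximal distance. If you either import that empty-circumdisk argument, or justify the finite list via the relevant-vector/obtuse-superbasis fact and then carry out the (indeed short) verifications in the coordinates $P=sv_1^{\star}+tv_2^{\star}$, your proof closes; as written, the reduction step is a genuine gap.
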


Lemma \ref{lem_triang} is no longer true if we use a basis of $L^{\star}$ which generates a triangle that is not acute. 

\smallskip

\begin{figure}
\centering
\includegraphics[width=0.8\textwidth]{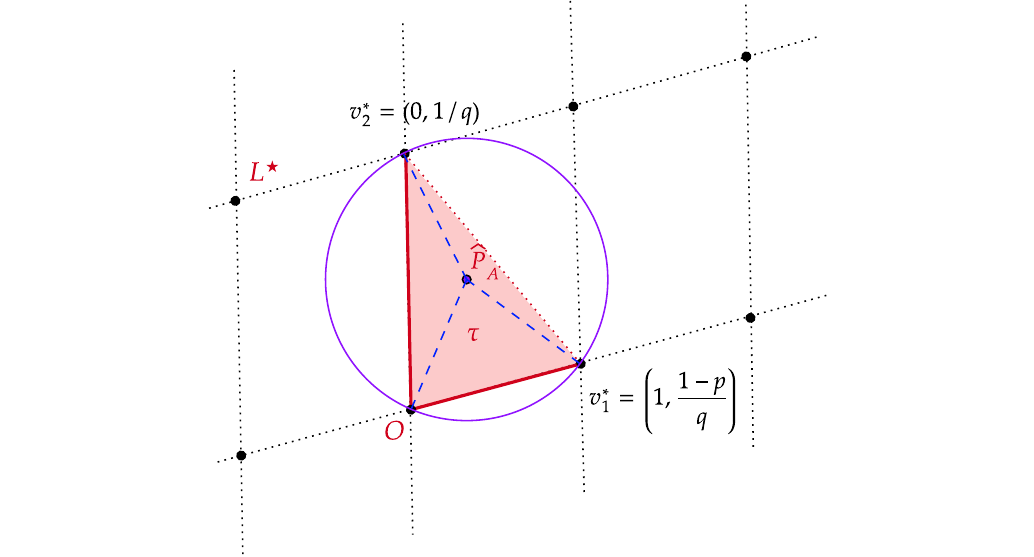}
\caption{The point in the acute triangle $\tau$ at maximum distance from the lattice $L^{\star}$ is the circumcenter of the triangle.}
\label{fig:optim}
\end{figure}

The maximum distance (inradius of $L^{\star}$) satisfies
$$
\mathcal R(L^{\star})^2=\dfrac{(p^2+q^2-p)^2+q^2}{4q^4}.
$$

Note that $\mathcal R(L^{\star})$ is the inradius of $L^{\star}$ for the Euclidean distance. Recall the definition \eqref{inradius_h} of $\mathcal R_{\hat h}(L^{\star})$: it is the maximum $L^2(\hat h)$-distance from the points of $L^{\star}$. Since the $L^2(\hat h)$-distance and the Euclidean distance are homothetic by a factor $|\hat h|^{1/2}$ we have the following relation between $\mathcal R(L^{\star})$ and $\mathcal R_{\hat h}(L^{\star})$:
$$
\mathcal R_{\hat h}^2(L^{\star})=|\hat h|\mathcal R(L^{\star})^2=q\mathcal R(L^{\star})^2.
$$
Therefore we obtain the following

\begin{thm}\label{ftpq} For a flat torus with parameters $(p,q)$ we have:
$$
\Lambda_1(T,\hat h)=\dfrac{\pi^2}{q^3}\left((p^2+q^2-p)^2+q^2\right).
$$
\end{thm}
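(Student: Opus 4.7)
The plan is to assemble Theorem \ref{ftpq} essentially by bookkeeping, combining Corollary \ref{cor:ev_flat}, Lemma \ref{lem_triang}, and the volume identity $|\hat h|=q$. None of the ingredients require fresh ideas beyond those already developed; the theorem is the formula one obtains by substituting the circumcenter of the fundamental triangle $\tau$ into the eigenvalue formula and rescaling by the volume.

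First, I would recall from Corollary \ref{cor:ev_flat} that for a harmonic potential $A=2\pi(\alpha\,dx_1+\beta\,dx_2)$ with position vector $P_A=(\alpha,\beta)$,
\[
\lambda_1(T,\hat h,A)=4\pi^2\min_{p^{\star}\in L^{\star}}\abs{P_A-p^{\star}}^2.
\]
Taking the supremum over harmonic potentials $A$ amounts to letting $P_A$ range freely over $\real 2$; by gauge invariance we may restrict $P_A$ to any fundamental domain for $L^{\star}$. Hence
\[
\sup_{A\in{\rm Har}(\hat h)}\lambda_1(T,\hat h,A)=4\pi^2\,\mathcal R(L^{\star})^2,
\]
where $\mathcal R(L^{\star})$ is the Euclidean inradius of $L^{\star}$ as defined just before the statement of the theorem.

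Next, I would invoke Lemma \ref{lem_triang}, which pinpoints the maximiser of $\dist(\cdot,L^{\star})$ in the acute triangle $\tau$ generated by the dual basis $(v_1^{\star},v_2^{\star})$ from \eqref{acute_basis}: it is the circumcenter
\[
\hat P_A=\left(\dfrac{p^2+q^2-p}{2q^2},\dfrac{1}{2q}\right).
\]
A direct Euclidean norm computation then gives
\[
\mathcal R(L^{\star})^2=\abs{\hat P_A}^2=\dfrac{(p^2+q^2-p)^2+q^2}{4q^4},
\]
which is exactly the formula displayed just before Theorem \ref{ftpq}.

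Finally, I would use $|\hat h|=q$ (the area of the fundamental parallelogram generated by $w_1=(1,0),\,w_2=(p,q)$, as noted in Subsection \ref{sub:quotient}), and multiply:
\[
\Lambda_1(T,\hat h)=|\hat h|\sup_{A}\lambda_1(T,\hat h,A)=q\cdot 4\pi^2\,\mathcal R(L^{\star})^2=\dfrac{\pi^2}{q^3}\bigl((p^2+q^2-p)^2+q^2\bigr),
\]
which is the desired identity. Since Lemma \ref{lem_triang} is proved separately at the end of the section, the only potential subtlety here is justifying that the supremum of $\lambda_1(T,\hat h,A)$ over all harmonic $A$ really is realised by the inradius of $L^{\star}$ and not diminished by some boundary effect; this is immediate from gauge invariance and the periodicity of $P_A\mapsto\min_{p^{\star}}\abs{P_A-p^{\star}}^2$ under translation by $L^{\star}$, so no obstacle arises.
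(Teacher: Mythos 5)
Your proof is correct and follows essentially the same route as the paper: it combines Corollary \ref{cor:ev_flat}, Lemma \ref{lem_triang} (circumcenter of the acute dual triangle), the resulting inradius formula $\mathcal R(L^{\star})^2=\frac{(p^2+q^2-p)^2+q^2}{4q^4}$, and the volume identity $|\hat h|=q$. The only cosmetic difference is that the paper phrases the rescaling via $\mathcal R_{\hat h}(L^{\star})^2=|\hat h|\,\mathcal R(L^{\star})^2$, which is the same bookkeeping you perform directly.
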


Finally, we consider:
$$
\Lambda_1(T)=\inf_{\hat h}\Lambda_1(T,\hat h).
$$
Taking the infimum over all flat tori of the above is equivalent to finding the dual lattice of given area having the smallest inradius, that is, 
to find the triangle of given area with smallest circumradius. The problem is equivalent to finding the triangle of largest area inscribed in a fixed circle. This is, evidently, the equilateral triangle, and since the dual lattice is homothetic to the lattice, the minimizing torus corresponds to the parameters $(p,q)=\left(\frac{1}{2},\frac{\sqrt{3}}{2}\right)$. Hence we have:

\begin{thm}\label{ftpq2} One has:
$$
\Lambda_1(T)=\dfrac{8\pi^2}{3\sqrt 3}
$$
attained precisely at the flat $60^0$-rhombic torus (that is, the flat equilateral torus).
\end{thm}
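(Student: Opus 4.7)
I would prove this by recasting $\Lambda_1(T,\hat h)$ as a shape invariant of a fundamental triangle of the dual lattice and then applying a classical isoperimetric inequality for triangles. The first step is to rewrite $\Lambda_1(T,\hat h)$ in purely geometric terms. By Theorem \ref{ftpq} and the identity $\mathcal R(L^\star)^2 = ((p^2+q^2-p)^2+q^2)/(4q^4)$ established just before it, we have $\Lambda_1(T,\hat h) = 4\pi^2 q\,R^2$, where $R$ is the circumradius of the acute triangle $\tau^\star$ with vertices $0,v_1^\star,v_2^\star$ (by Lemma \ref{lem_triang}). Since the fundamental parallelogram of $L^\star$ has area $1/q$, the triangle $\tau^\star$ has area $\mathcal A(\tau^\star) = 1/(2q)$. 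Substituting gives the manifestly scale-invariant form
\[
\Lambda_1(T,\hat h) = \frac{2\pi^2\, R^2}{\mathcal A(\tau^\star)},
\]
which depends only on the shape of $\tau^\star$.

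Next I would invoke the classical fact that among all triangles inscribed in a circle of radius $R$, the equilateral triangle maximizes the area, the maximum being $\tfrac{3\sqrt 3}{4}R^2$; equivalently, for any triangle of area $\mathcal A$ and circumradius $R$,
\[
R^2 \;\ge\; \frac{4\mathcal A}{3\sqrt 3},
\]
with equality if and only if the triangle is equilateral. Applied to $\tau^\star$ this yields $\Lambda_1(T,\hat h)\ge \frac{8\pi^2}{3\sqrt 3}$ for every conformal class, hence the claimed lower bound for $\Lambda_1(T)$.

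For the equality case, I would solve explicitly for the parameters $(p,q)$ that make $\tau^\star$ equilateral. Writing the three squared side lengths
\[
|v_1^\star|^2 = 1+\frac{(1-p)^2}{q^2},\qquad |v_2^\star|^2=\frac 1{q^2},\qquad |v_1^\star-v_2^\star|^2 = 1+\frac{p^2}{q^2},
\]
and imposing all three be equal forces $p=\tfrac12$ and $q=\tfrac{\sqrt 3}{2}$. This point lies on the arc $p^2+q^2 = 1$ of the moduli region of Lemma \ref{lem:moduli} and corresponds to the flat equilateral ($60^\circ$-rhombic) torus; the minimum is therefore attained uniquely there. The only real content beyond Theorems \ref{ftpq} and Lemma \ref{lem_triang} is the inscribed-triangle isoperimetric inequality, which is standard (one can prove it in a few lines by fixing two vertices on the circle, maximizing the distance of the third vertex from the chord, and then optimizing over the chord). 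The main thing to watch is that $(\tfrac12,\tfrac{\sqrt 3}{2})$ sits on the boundary of the moduli region and that the identification $\mathcal R(L^\star)=R$ of Lemma \ref{lem_triang} remains valid there, which it does since $\tau^\star$ is still acute at that point.
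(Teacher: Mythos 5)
Your proposal is correct and follows essentially the same route as the paper: the paper also reduces $\Lambda_1(T)$ to minimizing the circumradius of the dual-lattice triangle at fixed area, equivalently maximizing the area of a triangle inscribed in a fixed circle, which is achieved by the equilateral triangle, yielding $(p,q)=(\tfrac12,\tfrac{\sqrt3}{2})$. Your version merely makes this quantitative, writing $\Lambda_1(T,\hat h)=2\pi^2 R^2/\mathcal A(\tau^\star)$ and invoking the inequality $\mathcal A\le \tfrac{3\sqrt3}{4}R^2$ explicitly, which is a welcome filling-in of the paper's sketch rather than a different argument.
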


We provide now a proof of Lemma \ref{lem_triang}.




\begin{proof}[Proof of Lemma \ref{lem_triang}]

It is not restrictive to prove  Lemma \ref{lem_triang} for a lattice $L$ with basis $w_1=(1,0)$, $w_2=(p,q)$ with $0\leq p\leq\frac{1}{2}$, $q\geq p$, $p^2+q^2\geq 1$. In fact, the lattice $L^{\star}$ as in Lemma \ref{lem_triang} is homothetic to $L$ and the triangle generated by $v_1^{\star},v_2^{\star}$ is homothetic to the one generated by $w_1,w_2$. Call $\tau$ the (closure of the) triangle with basis $(w_1,w_2)$ (in red in Figure \ref{dim_lem_1}). Note that $\tau$ is {\it acute}: all angles are $\leq\pi/2$. Let $\rho$ be its circumradius and let $\mathcal R(L)$ be the inradius of $L$. We refer to Figure \ref{dim_lem_1} below. Lemma \ref{lem_triang} amounts to proving that
$$
\mathcal R(L)=\rho
$$

{\bf Step 1: $\mathcal R(L)\leq\rho$.} Let  $x\in\mathbb R^2$. It is not restrictive to assume that $x\in\tau$ (up to a translation of the origin and a rotation of $\pi$). We conclude that $\min\{|x|,|x-w_1|,|x-w_2|\}\leq\rho$ which implies that the distance of $x$ from $L$ is at most $\rho$.

\medskip

{\bf Step 2: $\mathcal R(L)=\rho$.} Let $D$ be the {\it open} disk bounded by the circumcircle $C$ of $\tau$. We make the following {\it claim}: there are no lattice points in $D$, while there are at most four lattice points on $C$. Assume that the claim is proved. Taking $x$ to be the circumcenter of $\tau$ we see that $\min\{|x|,|x-w_1|,|x-w_2|\}=\rho$ and that $|x-w|\geq\rho$ for any other $w\in L$. This concludes the proof of Lemma \ref{lem_triang} since we have found $x\in\mathbb R^2$ for which ${\rm min}_{w\in L}|x-w|=\min\{|x|,|x-w_1|,|x-w_2|\}=\rho$.

\medskip

{\bf Proof of the claim.} We prove that no lattice point lies in the open disk $D$ bounded by the circumcircle $C$ of $\tau$, and at most four lattice points belong to $C$.  The circumcenter of $\tau$ lies in its interior, or in the middle point of the hypothenuse (if $\tau$ is right). Let $h$ be the height on the {\it shortest side} of $\tau$, corresponding to $w_1$. We always have 
$$
h\geq\rho.
$$ 
Therefore, only the lattice points on the line $r_0$ through $O$ and $w_1$ and on its parallel $r_1$ through $w_2$ may belong to $D$, see Figure \ref{dim_lem_1} below.

\medskip

Moreover, the only lattice points in $r_0\cap C$ are $O$ and $w_1$ while no lattice point belongs to $r_0\cap D$. On the other hand, $w_2\in r_1\cap C$.  Since $0\leq p\leq\frac{1}{2}$, no lattice point $w_2+kw_1$, $k<0$, belong to $D$. If we prove that $w_2+ w_1$ does not belong $D$ we are done since then no other point $w_2+kw_1$, $k>0$ can belong to $D$. We refer to Figure \ref{dim_lem_1} below.

\medskip  Assume that  $w_2+w_1$ belongs to $D$ and let $v$ its nearest point on $C$. Let $\tau'$ the triangle of vertices $w_1,w_2,w_1+w_2$ which is congruent to $\tau$.  Call $\beta$ the angle of $\tau'$ at $w_1+w_2$, and call $\gamma$ the angle at $v$ of the triangle of vertices $w_1,w_2,v$. Let $\alpha$ be the angle of $\tau$ at $O$. See Figure \ref{dim_lem_1} below.
\begin{figure}[H]
\centering
\includegraphics[width=\textwidth]{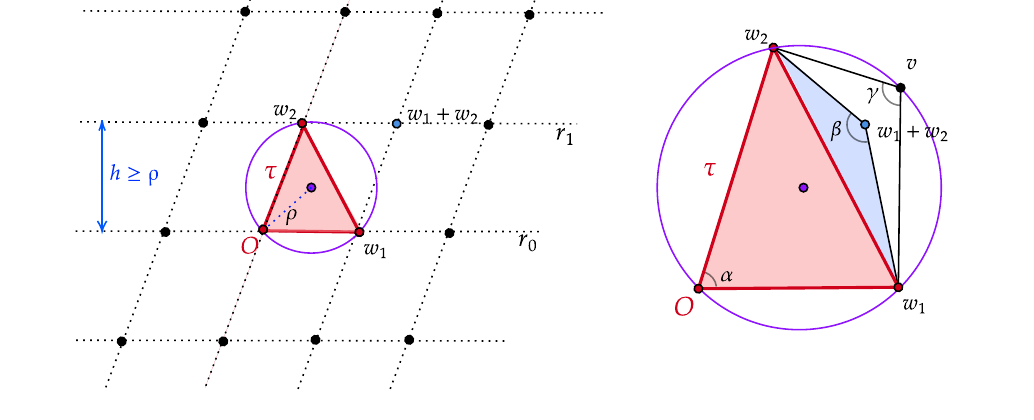}
\caption{}
\label{dim_lem_1}
\end{figure}
We have $\beta>\gamma$ and $\alpha+\gamma=\pi$, hence $\beta>\pi/2$ since $\alpha\leq\pi/2$. This is a contradiction with the fact that $\tau'$ is acute. We conclude that there are no lattice points on $r_1\cap D$. 

\medskip Through the proof we have seen tht $O,w_1,w_2\in C$. By the above argument, if all the angles of $\tau$ are {\it strictly} less that $\pi/2$, we see that $w_1+w_2\not\in C$, therefore $O,w_1,w_2$ are the only lattice points on $C$. If $\alpha=\pi/2$, then $w_1+w_2\in C$ and there are exactly four lattice points in $C$: this last case corresponds to a rectangular torus.

\end{proof}


\section{The conformal invariants in genus one}\label{sec:conf_inv} We now evaluate the conformal invariants introduced in Subsection \ref{sub:spec_inv} when the genus is $1$. Recall their definitions:

\begin{align}
\Lambda_1(\Sigma_g,[h],A)&=\sup_{h'\in [h]}\abs{h'} \lambda_1(\Sigma_g,h',A)\\
\Lambda_1(\Sigma_g,[h])&=\sup_{A:dA=0}\Lambda_1(\Sigma_g,[h],A)\\
\Lambda_1(\Sigma_g)&=\inf_{[h]}\Lambda_1(\Sigma_g,[h])
\end{align}

The following theorem shows that the supremum in the first invariant is attained at the flat metric in the conformal class.

\begin{thm}\label{flatisbest2} Given any metric $h$ on the $2$-torus $\Sigma_1$ one has
$$
\abs{h}\lambda_1(\Sigma_1,h,A)\leq \abs{\hat h}\lambda_1(\Sigma_1,\hat h,A)
$$
where $\hat h$ is the unique (up to homotheties) flat torus in the conformal class of $h$. Equality holds iff $h$ and $\hat h$ are homothetic. In particular:
$$
\Lambda_1(\Sigma_1,[h],A)=\abs{\hat h}\lambda_1(\Sigma_1,\hat h,A).
$$
\end{thm}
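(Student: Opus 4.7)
The plan is to exploit the conformal invariance of the magnetic Dirichlet energy in dimension two, together with the fact (Theorem \ref{thm:ev_flat}) that the first eigenfunctions of $\Delta_A^{\hat h}$ on a flat torus have constant modulus.

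Write $h = e^{2\phi}\hat h$ for a smooth conformal factor $\phi$ on $\Sigma_1$. The first step is to verify that in dimension two the magnetic energy density is conformally invariant, namely $|\nabla^A u|_h^2\, dv_h = |\nabla^A u|_{\hat h}^2\, dv_{\hat h}$ for any smooth complex $u$ and any real $1$-form $A$. Expanding
\begin{equation*}
|\nabla^A u|^2 = |\nabla u|^2 + |u|^2|A|^2 - 2\,\mathrm{Im}\bigl(\bar u\,\langle A, du\rangle\bigr),
\end{equation*}
each summand rescales by $e^{-2\phi}$ under $\hat h \mapsto h = e^{2\phi}\hat h$ (since the inverse metric on $1$-forms carries this factor), and this is exactly compensated by $dv_h = e^{2\phi}\,dv_{\hat h}$. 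In particular, the numerator of the Rayleigh quotient $R_A^h$ depends only on the conformal class of $h$.

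Next, take $u_1$ to be a first eigenfunction of $\Delta_A^{\hat h}$. By Theorem \ref{thm:ev_flat}, $u_1$ is an exponential of the form $e^{2\pi i\langle p^\star, x\rangle}$, hence $|u_1|\equiv 1$. Using $u_1$ as a test function in \eqref{minmax}, together with the invariance just proved and the identity $\int_{\Sigma_1}|u_1|^2\, dv_h = \abs{h}$,
\begin{equation*}
\lambda_1(\Sigma_1, h, A) \;\leq\; \frac{\int_{\Sigma_1} |\nabla^A u_1|_h^2\, dv_h}{\int_{\Sigma_1} |u_1|^2\, dv_h} \;=\; \frac{\int_{\Sigma_1} |\nabla^A u_1|_{\hat h}^2\, dv_{\hat h}}{\abs{h}} \;=\; \frac{\abs{\hat h}\,\lambda_1(\Sigma_1, \hat h, A)}{\abs{h}},
\end{equation*}
where the last equality uses $\int_{\Sigma_1} |\nabla^A u_1|_{\hat h}^2\, dv_{\hat h} = \lambda_1(\Sigma_1, \hat h, A)\cdot\abs{\hat h}$. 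Multiplying by $\abs{h}$ yields the claimed inequality.

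The equality case in one direction is the trivial scaling $\lambda_1(c\hat h, A) = c^{-1}\lambda_1(\hat h, A)$. Conversely, suppose equality holds for some $A$ with $\lambda_1(\Sigma_1,\hat h, A)>0$. Then $u_1$ realizes the minimum of $R_A^h$, hence $\Delta_A^h u_1 = \lambda_1(\Sigma_1,h,A)\,u_1$. A term-by-term inspection of \eqref{ml} yields $\Delta_A^h = e^{-2\phi}\Delta_A^{\hat h}$ in dimension two; combined with $\Delta_A^{\hat h} u_1 = \lambda_1(\Sigma_1,\hat h, A)\,u_1$ and the fact that $u_1$ is nowhere vanishing, this forces $e^{-2\phi}\lambda_1(\Sigma_1,\hat h, A) = \lambda_1(\Sigma_1,h,A)$ pointwise, so $\phi$ is constant and $h$, $\hat h$ are homothetic. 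The only delicate point is the careful bookkeeping of conformal weights in each term of the magnetic Laplacian, but this is routine in dimension two.
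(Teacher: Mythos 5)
Your proof is correct and follows essentially the same route as the paper: conformal invariance of the magnetic Dirichlet energy in dimension two, the unit-modulus flat-torus eigenfunction as a test function, and the conformal-change formula for $\Delta_A$ to settle the equality case (your explicit restriction to $\lambda_1(\Sigma_1,\hat h,A)>0$ there is a reasonable precaution, since for integral $A$ both sides vanish identically).
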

\begin{proof} First observe that the metrics $h$ and $\hat h$, being conformal, share the same harmonic $1$-forms. We then take an eigenfunction $u$ associated to $\lambda_1(\Sigma_1,\hat h,A)$ and use it as test-function for $\lambda_1(\Sigma_1,h,A)$.
From the calculation in Section \ref{sec:tori} it follows that all eigenfunctions of flat tori have constant modulus, hence we can take $u$ so that $\abs{u}=1$ on $\Sigma_1$.
We now have:
$$
\begin{aligned}
\abs{h}\lambda_1(\Sigma_1,h,A)
&=\lambda_1(\Sigma_1,h,A)\int_{\Sigma_1}\abs{u}^2dv_h\\
&\leq\int_{\Sigma_1}\abs{\nabla^Au}_h^2dv_h\\
&=\int_{\Sigma_1}\abs{\nabla^Au}_{\hat h}^2dv_{\hat h}\\
&=\lambda_1(\Sigma_1,\hat h,A)\int_{\Sigma_1}\abs{u}^2dv_{\hat h}\\
&=\abs{\hat h}\lambda_1(\Sigma_1,\hat h,A)
\end{aligned}
$$
where in the first inequality we used the min-max principle, and in the second equality the conformal invariance of the magnetic Dirichlet energy (see e.g., \cite[Appendix C]{CPS_AB}). The equality case follows from the expression of the magnetic Laplacian under conformal change. Assume that $h=\phi\hat h$ for $\phi\in C^{\infty}(\Sigma)$, and let $\Delta_{h,A}$ (resp. $\Delta_{\hat h,A}$) be the respective magnetic Laplacians with the same potential form $A$. Then we have
$$
\Delta_{h,A}=\dfrac{1}{\phi}\Delta_{\hat h,A}.
$$
If equality holds, then $u$ (which is an eigenfunction of $\Delta_{\hat h,A}$) must be an eigenfunction also of $\Delta_{h,A}$ and this forces $\phi$ to be constant, hence $h,\hat h$ are homothetic. Conversely, if $h,\hat h$ are homothetic it is immediate that the respective normalized eigenvalues are the same. 

\end{proof}

Theorem \ref{intro:flat} is now clear. In fact, it is just a consequences of Theorems \ref{ftpq}, \ref{ftpq2} and \ref{flatisbest2}.

\section{Some remarks on the conformal invariants}\label{bruno}

\subsection{The infimum in the conformal class is zero}\label{sub:inf_0}

\begin{thm}\label{thm_inf_0} We have, for a given conformal class of metrics $[h]$ on $\Sigma_g$ and a given closed potential $A$:
$$
\inf_{h'\in [h]}\abs{h'}\lambda_1(\Sigma_g,h',A)=0.
$$
\end{thm}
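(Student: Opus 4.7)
The plan is to exploit two features specific to dimension two: the conformal invariance of the magnetic Dirichlet energy $\int|\nabla^A u|^2\,dv$ (already used in the proof of Theorem \ref{flatisbest2}), and the vanishing $2$-capacity of a point in a surface. Together these allow one to produce a sequence $h_n\in[h]$ with $|h_n|\lambda_1(\Sigma_g,h_n,A)\to 0$: the idea is to pair a real-valued test function $u_\epsilon$ that peaks at a chosen point $p\in\Sigma_g$ but carries vanishingly small magnetic Dirichlet energy, against a conformal factor $\phi_n$ that concentrates the volume of $\Sigma_g$ at the same point $p$.

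First I would fix $p\in\Sigma_g$ and a normal chart around $p$, and for $0<\epsilon<\delta$ small enough define the logarithmic cutoff
\[
u_\epsilon(x)=\begin{cases}1 & |x|\le \epsilon,\\[2pt] \dfrac{\log(\delta/|x|)}{\log(\delta/\epsilon)} & \epsilon\le |x|\le \delta,\\[2pt] 0 & |x|\ge \delta,\end{cases}
\]
extended by $0$ outside the chart (and smoothed if desired; this does not affect the bounds below). Because $u_\epsilon$ is real and $A$ is real, the magnetic gradient splits orthogonally as $|\nabla^A u_\epsilon|_h^2=|\nabla u_\epsilon|_h^2+u_\epsilon^2\,|A|_h^2$. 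A direct computation in the chart gives $\int|\nabla u_\epsilon|_h^2\,dv_h=2\pi/\log(\delta/\epsilon)+o(1)\to 0$ as $\epsilon\to 0$ for fixed small $\delta$; meanwhile $\int u_\epsilon^2\,|A|_h^2\,dv_h\le \|A\|_{L^\infty(h)}^2\int u_\epsilon^2\,dv_h\to 0$ by dominated convergence, since $u_\epsilon$ is supported in the disk $B_\delta(p)$, is bounded by $1$, and tends pointwise to $0$ away from $p$. Hence
\[
\int_{\Sigma_g}|\nabla^A u_\epsilon|_h^2\,dv_h \;\longrightarrow\; 0 \qquad(\epsilon\to 0).
\]

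Next I would build conformal factors concentrating at $p$. Let $\rho_n\in C^\infty(\Sigma_g,[0,\infty))$ be a smooth approximation to the Dirac mass at $p$ in the measure $dv_h$, normalized by $\int\rho_n\,dv_h=1$, and put $\phi_n:=(1-\tfrac{1}{n})\rho_n+\tfrac{1}{n|h|}>0$. Then $h_n:=\phi_n h\in[h]$ satisfies $|h_n|=\int\phi_n\,dv_h=1$, and the continuity of $u_\epsilon$ with $u_\epsilon(p)=1$ gives, for each fixed $\epsilon>0$, $\int u_\epsilon^2\,\phi_n\,dv_h\to u_\epsilon^2(p)=1$ as $n\to\infty$. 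By conformal invariance of the magnetic Dirichlet energy, using $u_\epsilon$ as a test function in the Rayleigh quotient \eqref{minmax} for $(\Sigma_g,h_n,A)$ gives
\[
|h_n|\,\lambda_1(\Sigma_g,h_n,A)\;\le\;\dfrac{\int_{\Sigma_g}|\nabla^A u_\epsilon|_h^2\,dv_h}{\int_{\Sigma_g} u_\epsilon^2\,\phi_n\,dv_h}.
\]
Given $\eta>0$, I would first fix $\epsilon$ so the numerator is $<\eta/2$, and only then take $n$ large enough that the denominator exceeds $1/2$; this produces $h_n\in[h]$ with $|h_n|\lambda_1(\Sigma_g,h_n,A)<\eta$, forcing the infimum to be zero.

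The main subtlety, and the reason the argument is essentially two-dimensional, is that driving $\int|\nabla^A u|^2$ to zero on a real test function that remains equal to $1$ at $p$ reduces to the capacitary collapse $\int|\nabla u|^2\to 0$, which is possible only because a point has zero $2$-capacity; in higher dimension a punctured ball still has positive capacity. The order of the two limits is correspondingly delicate: one must keep $\epsilon$ fixed while sending $n\to\infty$ to prevent the denominator $\int u_\epsilon^2\phi_n\,dv_h$ from collapsing together with the numerator, and only afterwards let $\epsilon\to 0$.
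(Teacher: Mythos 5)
Your argument is correct, and it takes a genuinely different route from the paper. The paper degenerates the metric by conformally replacing a small ball $B(p,\epsilon)$ with (almost all of) a round unit sphere $\Omega_\epsilon=\mathbb S^2\setminus B(\epsilon)$: on that simply connected bubble the closed form $A$ is exact, hence gauged to zero, and the first Dirichlet eigenfunction of $\Omega_\epsilon$ serves as a test function, with $\lambda_1(\Sigma_g,h_\epsilon,A)\le\lambda(\epsilon)\to0$ while the volume stays bounded (the construction and the quasi-isometry technicalities are borrowed from Colbois--El Soufi). You instead keep the background metric $h$ fixed in the numerator, never gauge $A$ away, and use three ingredients: the pointwise splitting $|\nabla^A u|_h^2=|\nabla u|_h^2+u^2|A|_h^2$ valid for real $u$, the vanishing $2$-capacity of a point (the logarithmic cutoff), and the two-dimensional conformal invariance of the magnetic Dirichlet energy, while the conformal factors $\phi_n$ concentrate the (normalized) volume at $p$ so that the $L^2$ norm of $u_\epsilon$ in $h_n$ stays bounded away from zero. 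Your order of limits (fix $\epsilon$, then let $n\to\infty$) is handled correctly, and the minor points you wave at (smoothing the Lipschitz cutoff, the constant in the capacity estimate coming from comparing $h$ with the Euclidean metric in the chart, or simply using isothermal coordinates where the Dirichlet energy is exactly Euclidean) are harmless. What each approach buys: the paper's bubbling argument needs the gauge transformation on a simply connected region and the external input on Dirichlet eigenvalues of spherical caps, but makes $\lambda_1$ itself small with bounded volume; your argument is more self-contained and makes the $A$-dependence explicit through the term $\int u_\epsilon^2|A|_h^2\,dv_h$, at the cost of relying on the conformal invariance of the magnetic energy (which the paper anyway establishes and uses elsewhere, e.g.\ in the proof of Theorem \ref{flatisbest2}). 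Both proofs ultimately rest on the same two-dimensional phenomenon, the zero capacity of a point, so neither extends to higher dimensions.
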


\begin{proof} The proof amounts to construct a family of metrics $h_{\epsilon}$, conformal to $h$, such that the normalized ground state energy $\abs{h_{\epsilon}}\lambda_1(\Sigma_g,h_{\epsilon},A)\to 0$ as $\epsilon\to 0$.
Pick a point $p \in \Sigma_g$ and consider the ball $B(p,\epsilon)$ of radius $\epsilon$. When $\epsilon \to 0$, the ball $B(p,\epsilon)$ is quasi-isometric to a Euclidean ball of radius $\epsilon$ (see  \cite[Lemma 2.3]{CoSo2003}), and one can show that, for our purposes, we can assume that being only quasi-isometric to a Euclidean ball does not affect the final statement (see \cite[p. 343]{CoSo2003}).

Thus, we can assume that each such ball is Euclidean. By stereographic projection, $B(p,\epsilon)$ is mapped conformally to the domain $\Omega_{\epsilon}\doteq \mathbb S^2 \setminus B(\epsilon)$, the complement of a ball of radius $\epsilon$ on the round sphere of radius $1$. We then replace $B(p,\epsilon)$ with $\Omega_{\epsilon}$ to obtain a metric $h_{\epsilon}$ in the conformal class of $h$ (see this construction in \cite[Par. 3]{CoSo2003}). As $\Omega_{\epsilon}$ is simply connected, the potential $A$ is exact there, and up to a gauge transformation, the restriction of $A$ to $\Omega_{\epsilon}$ may then be supposed to be equal to $0$.  Take $u$ to be the first Dirichlet eigenfunction of $\Omega_{\epsilon}$; since it has support in $\Omega_{\epsilon}$, we see that $\Delta_A=\Delta u$. It is well known that the first eigenvalue of the Dirichlet problem on $\Omega_{\epsilon}$, say $\lambda(\epsilon)$, tends to $0$ with $\epsilon$. We now extend $u$ by zero outside $\Omega_{\epsilon}$  and use this extension as test-function for $\lambda_1(\Sigma_g,h_{\epsilon},A)$; we get
$$
\lambda_1(\Sigma_g,h_{\epsilon},A)\leq\lambda(\epsilon)
$$
which tends to zero with $\epsilon$. Note that, in the conformal change, the volume of $h_{\epsilon}$ is controlled from above: $\abs{h_{\epsilon}}\leq c\abs{h}$, hence the normalized eigenvalue tends to zero as well, which shows the assertion.
\end{proof}

\subsection{A uniform bound for hyperbolic metrics}

\begin{thm}\label{firstex} There is a constant $C=C(g)<+\infty$ depending only on the genus $g$, such that, for any hyperbolic metric $h$ on $\Sigma_g$ and any harmonic potential $A$, one has:
$$
\abs{h}\lambda_1(\Sigma_g,h,A)\leq C(g).
$$
\end{thm}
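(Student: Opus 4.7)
The plan is to combine gauge invariance on a simply connected disc with the existence of an embedded hyperbolic disc of definite radius on any compact hyperbolic surface. Since Gauss--Bonnet forces $\abs{h}=4\pi(g-1)$, it suffices to produce a constant $C_0=C_0(g)>0$ such that $\lambda_1(\Sigma_g,h,A)\le C_0$ uniformly in the hyperbolic metric $h$ and the closed (equivalently, harmonic) potential $A$; one then sets $C(g)=4\pi(g-1)C_0$.

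\textbf{Step 1: locate an embedded hyperbolic disc of definite radius.} I would first argue that there exists $\rho=\rho(g)>0$ such that every compact hyperbolic surface $(\Sigma_g,h)$ admits an embedded geodesic disc of radius $\rho$. This is a standard consequence of the thick/thin decomposition: by the collar lemma, a hyperbolic surface carries at most $3g-3$ pairwise disjoint simple closed geodesics, and the standard collar around each has area that stays bounded (in fact tends to $4$) as the core length shrinks, so the total area of the thin part is at most $12(g-1)<4\pi(g-1)$; the thick part is therefore non-empty, and combining this with Mumford-type compactness on $\mathcal M_g$ (to rule out degeneration of the maximum injectivity radius to zero) gives the desired $\rho(g)>0$. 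Pick a point $p\in\Sigma_g$ with $\mathrm{inj}_h(p)\ge\rho$; then $B(p,\rho)$ is isometric to a standard hyperbolic disc $\mathbb D_\rho\subset\mathbb H^2$, and in particular is simply connected.

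\textbf{Step 2: gauge-twist a Dirichlet eigenfunction of the disc.} Since $B(p,\rho)$ is simply connected and $A$ is closed, Poincaré's lemma yields a smooth real $\phi$ on $B(p,\rho)$ with $A=d\phi$; thus $A^\sharp=\nabla\phi$ on the disc. Let $u_0$ be the first Dirichlet eigenfunction of the ordinary Laplace--Beltrami operator on the model disc $\mathbb D_\rho$, with eigenvalue $C_0:=\lambda_1^D(\mathbb D_\rho)$, a constant depending only on $\rho$, hence only on $g$. Define
$$
\tilde u=\begin{cases} e^{i\phi}u_0 &\text{on } B(p,\rho),\\ 0 &\text{on } \Sigma_g\setminus B(p,\rho).\end{cases}
$$
Because $u_0$ vanishes on $\partial\mathbb D_\rho$, the function $\tilde u$ lies in $H^1(\Sigma_g,\mathbb C)$ and is a legitimate test function for the min-max characterisation \eqref{minmax}. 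The key cancellation, identical to the one proving Lemma~1 of the excerpt, reads on $B(p,\rho)$
$$
\nabla^A(e^{i\phi}u_0)=e^{i\phi}\nabla u_0+ie^{i\phi}u_0\nabla\phi-iu_0e^{i\phi}A^\sharp=e^{i\phi}\nabla u_0,
$$
so $\abs{\nabla^A\tilde u}^2=\abs{\nabla u_0}^2$ and $\abs{\tilde u}^2=\abs{u_0}^2$ pointwise. Consequently
$$
\lambda_1(\Sigma_g,h,A)\le R_A(\tilde u)=\frac{\int_{B(p,\rho)}\abs{\nabla u_0}^2\,dv_h}{\int_{B(p,\rho)}\abs{u_0}^2\,dv_h}=C_0,
$$
and multiplying by $\abs h=4\pi(g-1)$ yields $\abs{h}\lambda_1(\Sigma_g,h,A)\le 4\pi(g-1)C_0=:C(g)$.

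\textbf{Main obstacle.} The only non-variational ingredient is Step 1: the uniform positivity of the maximum injectivity radius across the whole (non-compact) moduli space $\mathcal M_g$. The collar lemma takes care of one degeneration direction --- pinching up to $3g-3$ geodesics --- and the numerical miracle $4\pi>12$ leaves a definite amount of thick area; what must then be checked is that this thick region really does contain a point of definite injectivity radius as one approaches the Deligne--Mumford boundary, which is the standard but genuinely hyperbolic-geometric content of the argument.
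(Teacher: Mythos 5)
Your argument is correct and is essentially the paper's proof: gauge away the closed potential on an embedded geodesic disc of universal radius, use the zero-extended Dirichlet ground state of that disc as a test function in \eqref{minmax}, and multiply by the Gauss--Bonnet area $4\pi(g-1)$. The only point you flag as an obstacle --- uniform positivity of the maximal injectivity radius over all hyperbolic metrics --- is precisely the standard Margulis-constant fact the paper cites, and your collar/area count (thin part contained in at most $3g-3$ collars of area at most $4$, hence of total area less than $4\pi(g-1)$) already yields it; the appeal to Mumford-type compactness is unnecessary (and would not work as stated, since $\mathcal{M}_g$ is non-compact), because the thick part of the thick/thin decomposition being non-empty directly provides the required point of definite injectivity radius.
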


\begin{proof}

Every hyperbolic surface has a point $p$ with injectivity radius $\rho >0$, where $\rho$ is the Margulis constant (see \cite{BaGrSc85}). 
Then, $B(p,\rho)$ is an embedded hyperbolic disc of radius $\rho$; since it is simply connected, $A$ is exact on $B(p,\rho)$ so that,  up to a gauge transformation, one can assume that $A$ is $0$ on $B(p,\rho)$. 
As in Example 1, a universal upper bound of $\lambda_1(\Sigma_g,h,A)$ is then given by the first Dirichlet eigenvalue of $B(p,\rho)$, denoted $\lambda(\rho)$ and not depending on $p$. As the area of a hyperbolic surface of genus $g$ is $4\pi(g-1)$, there is a universal upper bound 
$$
\vert h\vert\lambda_1(\Sigma_g,h,A)\leq C(g)\doteq 4\pi(g-1)\lambda(\rho)
$$ 
as well. We stress that this upper bound is the same for all compact surfaces with curvature $-1$ and does not depend on the conformal class.

\end{proof} 

\subsection{The normalized eigenvalue can be very large}

\begin{thm}\label{secondex} There exist a closed $1$-form $A$ on $\Sigma_g$ and   a sequence $h_n$ of metrics on $\Sigma_g$ such that 
$$
\lim_{n\to\infty} \abs{h_n}\lambda_1(\Sigma_g,h_n,A)=+\infty.
$$
\end{thm}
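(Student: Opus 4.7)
The plan is to build $h_n$ by inserting a long flat cylinder across a non-separating simple closed curve $\gamma\subset\Sigma_g$, with $A$ a closed $1$-form having flux $\tfrac12$ around $\gamma$ and integer flux around all other basis cycles of $H_1(\Sigma_g,\mathbb{Z})$. Concretely, fix a reference metric $h^*$ on $\Sigma_g$ that is a flat product $ds^2+d\theta^2$ on a tubular neighbourhood $N(\gamma)\cong(-1,1)\times(\mathbb{R}/\mathbb{Z})$, and let $V\doteq\Sigma_g\setminus N(\gamma)$, a compact surface of genus $g-1$ with two boundary circles. For each $n$ define $h_n$ to coincide with $h^*$ on $V$ and to make $(N(\gamma),h_n)$ isometric to the flat cylinder $U_n\cong(-n,n)\times(\mathbb{R}/\mathbb{Z})$, so that $|h_n|=|V|+2n\to\infty$. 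Since $A$ is a fixed closed $1$-form on $\Sigma_g$, it suffices to prove $\lambda_1(\Sigma_g,h_n,A)\geq c>0$ uniformly in $n$.

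For this uniform positivity I would apply Neumann bracketing along the decomposition $\Sigma_g=\overline{U}_n\cup\overline{V}$. The magnetic quadratic form $u\mapsto\int_{\Sigma_g}|\nabla^Au|^2\,dv_{h_n}$ is additive and $H^1(\Sigma_g,\mathbb{C})$ injects into $H^1(\overline U_n,\mathbb{C})\oplus H^1(\overline V,\mathbb{C})$ by restriction, so the variational principle yields
\[
\lambda_1(\Sigma_g,h_n,A)\ \geq\ \min\bigl\{\lambda_1^{N}(\overline U_n,h_n,A|_{\overline U_n}),\ \lambda_1^{N}(\overline V,h^*,A|_{\overline V})\bigr\}.
\]
On $U_n$, a gauge transformation puts $A|_{U_n}$ in the harmonic form $\pi\,d\theta$. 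Expanding $u(s,\theta)=\sum_{k\in\mathbb{Z}}u_k(s)e^{2\pi ik\theta}$ and using $(2k-1)^2\geq 1$ for all $k\in\mathbb{Z}$ gives
\[
\int_{U_n}|\nabla^Au|^2\,dsd\theta\ \geq\ \int_{U_n}|\partial_\theta u-i\pi u|^2\,dsd\theta\ =\ \pi^2\sum_{k\in\mathbb{Z}}(2k-1)^2\int_{-n}^n|u_k|^2\,ds\ \geq\ \pi^2\int_{U_n}|u|^2\,dsd\theta,
\]
so $\lambda_1^{N}(\overline U_n)\geq\pi^2$ for every $n$.

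The triple $(V,h^*,A|_V)$ is independent of $n$, and each boundary circle of $V$ is isotopic to $\gamma$ in $\Sigma_g$, so $A|_V$ has flux $\tfrac12$ around each of them. Consequently the Hermitian line bundle $\overline V\times\mathbb{C}$ with connection $d-iA|_V$ has holonomy $e^{i\pi}=-1$ around these boundary loops and therefore admits no nonzero parallel section; hence $\lambda_1^{N}(\overline V,h^*,A|_{\overline V})$ is a strictly positive constant $c_V$. Combining the two estimates,
\[
|h_n|\lambda_1(\Sigma_g,h_n,A)\ \geq\ (|V|+2n)\min(\pi^2,c_V)\ \xrightarrow[n\to\infty]{}\ +\infty,
\]
as required. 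The main obstacle is the strict positivity $c_V>0$: this is the only step that truly exploits the non-integer-flux hypothesis on $A$, through the non-triviality of the holonomy of $d-iA$ on the fixed piece $V$. The cylinder estimate is a direct Fourier computation, the global combination is standard Neumann bracketing, and the smoothness of $h_n$ across $\partial V$ together with the $n$-independence of the gauged potential on $U_n$ are routine bookkeeping.
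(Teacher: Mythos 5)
Your proof is correct, and it follows the same overall strategy as the paper's: decompose $\Sigma_g$ into a long flat cylinder plus a fixed piece, bound $\lambda_1$ from below by the minimum of the magnetic Neumann eigenvalues of the pieces (the mediant inequality), note that non-integral flux on each piece forces a uniformly positive Neumann ground state energy, and let the area grow linearly. The differences are in the decomposition, and they work in your favour. The paper takes a hyperbolic pants picture, puts flux $\tfrac12$ on \emph{two} classes $\chi_1,\chi_2$, and uses three pieces ($\Sigma_{g-1}$, a three-holed sphere $S$, and the cylinder $Cl(n)$); the second flux is needed because the boundary of $\Sigma_{g-1}$ is separating, hence carries zero flux, so that piece must see non-integrality through an interior loop ($\chi_2$). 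The uniform cylinder bound is quoted from Colbois--Savo. You instead cut along the single flux-carrying curve $\gamma$, so the fixed complement $V$ sees the flux $\tfrac12$ directly through its boundary loops (isotopic to $\gamma$), and only one non-integral flux is required; and you replace the citation for the cylinder by the explicit separation-of-variables bound $\pi^2$, which makes the whole argument self-contained. Your holonomy argument on $V$ is exactly the Shigekawa-type statement the paper invokes for its fixed pieces. The only step worth writing out is the one you flag as bookkeeping: realize $h_n$ on the fixed neighbourhood $N(\gamma)$ as $\phi_n(s)^2\,ds^2+d\theta^2$ with $\phi_n\equiv 1$ near $\partial N(\gamma)$ and $\int_{-1}^1\phi_n=2n$; then $h_n$ is smooth, $A$ and its fluxes are untouched, and $(N(\gamma),h_n)$ is isometric to the flat cylinder of length $2n$ on which your Fourier estimate applies.
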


Let us begin the proof by fixing a hyperbolic surface $(\Sigma_g,h)$ that we write as a union of pants having, as boundary, closed geodesics of length $1$ (see Figure \ref{fig:Example 1} \ and \cite{Bu92} for this type of construction). We fix a canonical basis $\chi_1,...,\chi_{2g}$ of the homology of $\Sigma_g$ (see Subsection \ref{sub:can_bas} for the definition), and as potential form we choose  the closed form $A$ with flux $1/2$ along $\chi_1$ and $\chi_2$ and flux $0$ along $\chi_j$, $j\ge 3$.

\begin{figure}[H]
\centering  \includegraphics[width=\textwidth]{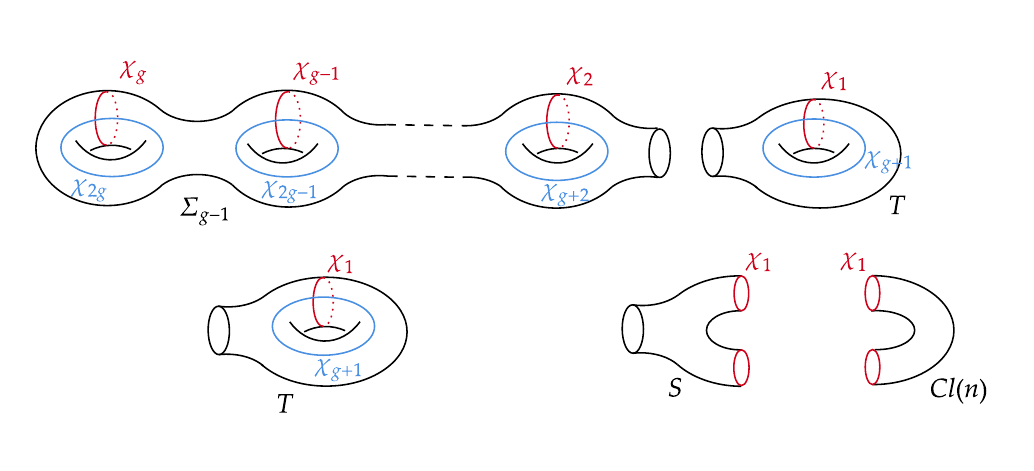}
 \caption{The surface $\Sigma_g$ and its splitting}
  \label{fig:Example 1}
  \end{figure}


\smallskip

We split $\Sigma_g$ as the union of: a hyperbolic surface of genus $g-1$ with one geodesic boundary piece of length $1$, which we denote by $\Sigma_{g-1}$, and a hyperbolic torus $T$ with geodesic boundary of length $1$.


\smallskip

We first work on the surface $\Sigma_{g-1}$. As the restriction of $A$ has flux $1/2$ around $\chi_2\subset \Sigma_{g-1}$, the first eigenvalue of the magnetic Laplacian with Neumann boundary condition, denoted by $\nu_1(\Sigma_{g-1},A)$ is fixed and strictly positive: in fact it is known, by the Shigekawa argument we referred to in the introduction, that the first Neumann eigenvalue is $0$ if and only if all the fluxes are integers (see \cite[(ii) of Proposition 3.1]{shi87} and \cite[(1.7) of Theorem 1.7]{HHHO1999}).

\medskip

Next, we cut the torus $T$ along two closed geodesics homotopic to $\chi_1$ as in Figure \ref{fig:Example 1}, in order to obtain a sphere with three geodesic boundary pieces of length $1$ denoted by $S$. Again, this part is fixed, has a non-integral flux and, consequently, its first Neumann eigenvalue $\nu_1(S)$ is strictly positive.

\medskip
Between the two geodesic boundaries of $S$ corresponding to $\chi_1$, one can glue a flat cylinder $Cl(n)$ of length $n$ and constant cross-section length $1$. One can perform a suitable smoothing on $S$ so that the resulting metric $h_n$ on the union of the three pieces is smooth. Thus, we arrive at a splitting:

\begin{equation}\label{splitting}
\Sigma_g=\Sigma_{g-1}\cup S\cup Cl(n).
\end{equation}

The first Neumann eigenvalue $\nu_1(Cl(n))$ is positive and controlled from below by a positive constant $C$, not depending on  $n$ by \cite[Theorem 1]{CoSa2018},  (note that the variable $L$ in Theorem 1 takes the value $1$ in our situation). 
It is an easily verified fact that the Rayleigh quotient of a function $u$ on a manifold which is the union of pieces $N_1,\dots, N_k$ is bounded below by the minimum of the eigenvalues $\nu_1(N_1), \dots, \nu_1(N_k)$. Given the splitting \eqref{splitting}, the Rayleigh quotient  of a function $u$ on $\Sigma_g$ is bounded from below by the minimum of $\nu_1(\Sigma_{g-1})$, $\nu_1(S)$, $\nu_1(Cl(n))$. We have just seen that these eigenvalues have a uniform positive lower bound independent of $n$; as the area of $h_n$ goes to $\infty$ with $n$, the first normalized eigenvalue diverges to $\infty$ as well.

\section{Higher dimensional flat tori}\label{sec:ntori}
A $d$-dimensional flat Riemannian torus is the quotient of $\mathbb R^d$ by a lattice $L$ with the inherited Euclidean metric. Precisely, if the lattice is generated by $d$ linearly independent vectors $w_1,...,w_d$, then
$$
L=\left\{\sum_{i=1}^nk_iw_i\,:\,k_i\in\mathbb Z\right\}.
$$
 We also write $(T^d,\hat h)=(\mathbb R^d/ L,h_E)$, where $h_E$ is the Euclidean metric.  The spectrum of $d$-dimensional flat tori is computed as in Section \ref{sec:tori} but we recall it here for completeness:

\begin{thm}
Assume that the flat torus $(T^d,\hat h)$ is the quotient of $\mathbb R^d$ by the lattice $L$. Then a complete set of eigenfunctions of the magnetic Laplacian with potential $A=2\pi(\sum_{i=1}^d\alpha_idx_i)$ is given by $\{e^{2\pi i\langle p^{\star},x\rangle}:p^{\star}\in L^{\star}\}$, and the spectrum is given by
$$
{\rm spec}(T^d,\hat h,A)=\left\{4\pi^2|P_A-p^{\star}|^2:p^{\star}\in L^{\star}\right\},
$$
where $P_A=(\alpha_1,...,\alpha_d)$.
The lowest eigenvalue is
$$
\lambda_1(T^d,\hat h, A)=4\pi^2 \inf_{p^{\star}\in L^{\star}}\abs{P_A-p^{\star}}^2.
$$
\end{thm}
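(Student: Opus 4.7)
The theorem is the $d$-dimensional analogue of Theorem \ref{thm:ev_flat}, so my plan is to follow exactly the same scheme as in Section \ref{sec:tori}, just replacing $2$ by $d$ in all formulas. The proof has essentially three ingredients: well-definedness of the proposed eigenfunctions on the quotient, a direct computation showing they are indeed eigenfunctions with the claimed eigenvalue, and completeness.

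First I would check that for $p^{\star}\in L^{\star}$ the function $\tilde f_{p^{\star}}(x)=e^{2\pi i\langle p^{\star},x\rangle}$ on $\mathbb R^d$ is $L$-periodic, since by the very definition of the dual lattice $\langle p^{\star},w\rangle\in\mathbb Z$ for every $w\in L$, so that $\tilde f_{p^{\star}}(x+w)=\tilde f_{p^{\star}}(x)$. Consequently $\tilde f_{p^{\star}}$ descends to a smooth function $f_{p^{\star}}$ on the quotient $T^d=\mathbb R^d/L$.

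Second I would compute $\Delta_A f_{p^{\star}}$ using formula \eqref{ml}. Since $A=2\pi\sum_i\alpha_i dx_i$ has constant coefficients, it is parallel, hence $\delta A=0$ and $|A|^2=4\pi^2|P_A|^2$ with $P_A=(\alpha_1,\dots,\alpha_d)$. A direct calculation yields $\Delta f_{p^{\star}}=4\pi^2|p^{\star}|^2 f_{p^{\star}}$ and $df_{p^{\star}}=2\pi i f_{p^{\star}}\sum_j p_j^{\star} dx_j$, so that $2i\langle A,df_{p^{\star}}\rangle_{\hat h}=-8\pi^2\langle P_A,p^{\star}\rangle f_{p^{\star}}$. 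Summing these three contributions gives
\begin{equation*}
\Delta_A f_{p^{\star}}=4\pi^2\bigl(|p^{\star}|^2+|P_A|^2-2\langle P_A,p^{\star}\rangle\bigr)f_{p^{\star}}=4\pi^2|P_A-p^{\star}|^2 f_{p^{\star}},
\end{equation*}
establishing the eigenvalue equation.

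Third I would invoke completeness. The family $\{f_{p^{\star}}\}_{p^{\star}\in L^{\star}}$ is (up to normalization by $|\hat h|^{-1/2}$) the standard Fourier basis of $L^2(T^d,dv_{\hat h})$; this is a classical fact, e.g.\ \cite[\S III.B]{BeGaMa1971}. Since the magnetic Laplacian $\Delta_A$ is self-adjoint with discrete spectrum, and we have produced a complete orthogonal system of eigenfunctions, there are no other eigenvalues, and the spectrum is precisely $\{4\pi^2|P_A-p^{\star}|^2:p^{\star}\in L^{\star}\}$. The expression for the lowest eigenvalue then follows from the min-max principle by taking the infimum over $p^{\star}\in L^{\star}$.

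There is no real obstacle here, since every step is a routine adaptation of the two-dimensional case already carried out. The one point that deserves a line of explicit justification, rather than just copying the $d=2$ argument, is the parallelism of $A$ (and hence $\delta A=0$) in arbitrary dimension; but this is immediate because $A$ has constant coefficients in the global flat coordinates $(x_1,\dots,x_d)$ on $\mathbb R^d/L$.
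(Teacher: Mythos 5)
Your proposal is correct and follows essentially the same route as the paper, which itself just repeats the two-dimensional computation of Section \ref{sec:tori}: periodicity of the exponentials via the dual lattice, the pointwise computation of $\Delta_A f_{p^{\star}}$ from \eqref{ml} using $\delta A=0$ and constancy of $|A|^2$, and completeness of the Fourier family as for the Laplace--Beltrami operator. Your explicit remark that parallelism of $A$ gives $\delta A=0$ in any dimension is exactly the point the paper leaves implicit, so nothing is missing.
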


 We prove that for flat $d$-dimensional tori, ground state-isospectrality implies isometry:
 \begin{thm}\label{isosp_ntori} Assume that the flat metrics $h,h_0$ on $T^d$ have the same ground state spectrum.  Then $h_0$ and $h$ are isometric.
\end{thm}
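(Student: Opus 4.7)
The plan is to read the Gram matrix of the dual lattice directly from the ground state spectrum, using the \emph{exact} spectral formula for flat tori (Corollary~\ref{cor:ev_flat}), and then invoke the standard rigidity of a lattice under its Gram matrix. Realize $(T^d,h)=\mathbb R^d/L$, and pick a basis $(w_1,\dots,w_d)$ of $L$ matching the fixed homology basis $(\chi_1,\dots,\chi_d)$ of $T^d$ that is dual to the cohomology basis $(\alpha_1,\dots,\alpha_d)$ used in the definition of $\mu_{jk,n}$. If $(w_1^\star,\dots,w_d^\star)$ denotes the basis of $L^\star$ dual to $(w_j)$, the harmonic representative of $\alpha_j$ on $(T^d,h)$ is the parallel form $\sum_i (w_j^\star)_i\,dx_i$. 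Hence the potential $\frac{1}{n}(\alpha_j+\alpha_k)$, written in the normalized form $2\pi\sum_i c_i\,dx_i$, has position vector $P_{jk,n}=\frac{1}{2\pi n}(w_j^\star+w_k^\star)$.

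By Corollary~\ref{cor:ev_flat},
$$
\mu_{jk,n}(T^d,h)=4\pi^2\min_{p^\star\in L^\star}|P_{jk,n}-p^\star|^2.
$$
For $n$ larger than some $N_0$ (depending on the inradius of $L^\star$), $P_{jk,n}$ lies strictly inside the Voronoi cell of $0\in L^\star$, the minimum is attained at $p^\star=0$, and one obtains the exact identity $\mu_{jk,n}(T^d,h)=\frac{1}{n^2}|w_j^\star+w_k^\star|^2$. Thus the ground state spectrum determines the squared lengths $|w_j^\star+w_k^\star|^2$ for all $1\le j\le k\le d$. The diagonal case $j=k$ recovers $|w_j^\star|^2$ (up to the harmless factor $4$), and polarization
$$
2\langle w_j^\star,w_k^\star\rangle=|w_j^\star+w_k^\star|^2-|w_j^\star|^2-|w_k^\star|^2
$$
reconstructs the full Gram matrix of $L^\star$ in the basis $(w_j^\star)$.

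Now write $(T^d,h_0)=\mathbb R^d/L_0$, with a basis $(u_1,\dots,u_d)$ of $L_0$ matching the \emph{same} homology basis $(\chi_j)$, and dual basis $(u_j^\star)$ of $L_0^\star$. The hypothesis that $h$ and $h_0$ share the ground state spectrum forces the Gram matrix of $(u_j^\star)$ to equal that of $(w_j^\star)$. The linear map $\Phi:\mathbb R^d\to\mathbb R^d$ defined by $\Phi(w_j^\star)=u_j^\star$ then preserves every inner product, so $\Phi\in{\bf O}(d)$ and $\Phi(L^\star)=L_0^\star$; dualizing and using $\Phi^{-T}=\Phi$ for orthogonal $\Phi$, one obtains $\Phi(L)=L_0$, so $\Phi$ descends to an isometry $(T^d,h)\to(T^d,h_0)$. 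No genuine obstacle is expected: the only delicate point is matching the lattice bases to a fixed homology basis on $T^d$ so that ``same ground state spectrum'' translates into ``equal Gram matrices in corresponding bases'', and in contrast to the surface case no Torelli-type ingredient is needed, since the flat-torus rigidity reduces to the elementary fact that the Gram matrix of a basis of $\mathbb R^d$ determines the basis up to an orthogonal transformation.
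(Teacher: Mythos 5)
Your proof is correct, but it takes a genuinely different route from the paper's. You exploit the exact flat-torus spectrum (Corollary \ref{cor:ev_flat}): since $P_{jk,n}=\frac{1}{2\pi n}(w_j^\star+w_k^\star)\to 0$, for $n$ large the closest point of $L^\star$ is $0$ and $\mu_{jk,n}=\frac{1}{n^2}\abs{w_j^\star+w_k^\star}^2$ \emph{exactly}; polarization then hands you the Euclidean Gram matrix of the dual basis $(w_j^\star)$ with no undetermined constant, and the elementary lattice rigidity ($\Phi\in{\bf O}(d)$ with $\Phi(L^\star)=L_0^\star$, hence $\Phi(L)=L_0$ by duality) finishes. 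The paper instead applies the general weak-field asymptotics of Theorem \ref{thm:asymp} to the $L^2$ Gram matrices $\Gamma_{jk}=\scal{\alpha_j}{\alpha_k}_{L^2(h)}$, which only yields the proportionality $\Gamma(h)=\frac{\abs{h}}{\abs{h_0}}\Gamma(h_0)$; it then passes through the Jacobian torus, dualizes twice to conclude that $h$ and $h_0$ are homothetic, and fixes the homothety constant by the scaling behaviour of $\lambda_1$. Your argument buys exact identities (no limits) and skips the homothety/scaling step, because the explicit spectrum already encodes the covolume; the paper's argument buys uniformity with the genus-$g$ machinery, where no exact spectral formula exists and the asymptotic-plus-Gram-matrix scheme is the one that generalizes. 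You correctly observe that no Torelli-type input is needed here. One minor terminological point: the threshold $N_0$ is governed by the packing radius of $L^\star$ (half the length of its shortest nonzero vector), not by the inradius/covering radius in the sense of \eqref{inradius_h}; your stated condition, that $P_{jk,n}$ lie strictly inside the Voronoi cell of $0$, is the right one.
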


\begin{proof}

Let $(T^d,h)$ be a flat torus. Its Jacobian torus is ${\rm Jac}(T^d,h)={\rm Har}(h)/L^{\star}$, where now $L^{\star}$ is generated by a dual basis $(\alpha_1,...,\alpha_d)$ of a given homology basis. We can identify the Jacobian torus of $(T^d,h)$ with $\real d/\mathbb Z^d$ with the metric
$$
\Gamma_{jk}(h)=\scal{\alpha_j}{\alpha_k}_{L^2(h)},
$$
where we take $\alpha_j=dx_j$, $j=1,...,d$.
In other words, the metric is $h=\sum_{j,k}\Gamma_{jk}dx_jdx_k$. The matrix $\Gamma(h)$ with entries $\Gamma_{jk}$ is the {\it Gram matrix} associated to the metric $h$ (see also Subsection \ref{sub:can_bas}).

\smallskip

Let $h_0,h$ be two flat metrics on $T^d$. It is clear that if $\Gamma(h)=c\Gamma(h_0)$ then the respective Jacobian tori are homothetic. Since ${\rm Jac}(T^d,h)$ is homothetic to the dual torus of $(T^d,h)$, taking again duals, we conclude that $h$ and $h_0$ are homothetic: $h=bh_0$. Now observe that $\lambda_1$ rescales as in the Laplace-Beltrami case, that is:
$$
\lambda_1(T^d,h,\frac 1n(\alpha_j+\alpha_k))=\frac{1}{b^2}\lambda_1(T^d,h_0,\frac 1n(\alpha_j+\alpha_k))
$$ 
for all $j,k,n$.
Hence, by ground state-isospectrality we conclude that $b=1$, i.e., $h$ and $h_0$ are isometric.

\smallskip It remains to show that if $h_0$ and $h$ have the same ground state spectrum, then
$$
\Gamma(h)=c\Gamma(h_0).
$$
From the consequence \eqref{eltwonorm} of Theorem \ref{thm:asymp} we have the asymptotics
$$
\lim_{r\to 0}\dfrac{\lambda_1(T^d,h,r(\alpha_j+\alpha_k))}{r^2}=
\dfrac{1}{\abs h}\norm{\alpha_j+\alpha_k}^2_{L^2(h)}
=\frac{
\Gamma_{jj}(h)+\Gamma_{kk}(h)+2\Gamma_{jk}(h)}{\abs{h}}.
$$
We have the analogous limits for $h_0$. Taking $r=\frac 1n$ and passing to the limit as $n\to\infty$ we have, by our hypothesis:
$$
\abs{h_0}\left(\Gamma_{jj}(h)+\Gamma_{kk}(h)+2\Gamma_{jk}(h)\right)=\abs{h}\left(\Gamma_{jj}(h_0)+\Gamma_{kk}(h_0)+2\Gamma_{jk}(h_0)\right)
$$
for all $j,k$. It is an easy exercise to conclude that
$$
\Gamma_{jk}(h)=\dfrac{\abs{h}}{\abs{h_0}}\Gamma_{jk}(h_0)
$$
for all $j,k$. This concludes the proof.

\end{proof}

 We can also consider the spectral invariants defined in Subsection \ref{sub:spec_inv}.
 Most of the proofs of Sections \ref{sec:optimization}  and \ref{sec:conf_inv} apply without major modifications. In particular we have an analogous of Theorem \ref{flatisbest2}:
 \begin{thm}
 Let $\hat h$ be a flat metric on $T^d$ and let $h\in[\hat h]$. Then, for any closed $1$-form $A$ on $T^d$ we have
\begin{equation}\label{up_bd_n}
|h|\lambda_1(T^d,h,A)\leq|\hat h|\lambda_1(T^d,\hat h,A),
 \end{equation}
 \end{thm}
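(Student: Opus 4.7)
The plan is to follow the proof of Theorem \ref{flatisbest2} essentially verbatim, using a flat-torus eigenfunction as a test function, and then to absorb the extra conformal factor that appears in dimension $d\ge 3$ (where, unlike the surface case, the magnetic Dirichlet energy is not conformally invariant) via a Hölder inequality, together with the freedom to normalize the flat representative of the conformal class by homothety.

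First I would exploit the explicit description of the magnetic spectrum of $(T^d,\hat h)$ proved above: every eigenfunction of $\Delta_{\hat h}^A$ has the form $u(x)=e^{2\pi i\langle p^\star,x\rangle}$ with $p^\star\in L^\star$, so $|u|\equiv 1$ on $T^d$ and, by a direct computation, $|d^A u|_{\hat h}^2\equiv 4\pi^2|P_A-p^\star|^2$ is pointwise constant. Choosing $p^\star$ to realize $\min_{q^\star\in L^\star}|P_A-q^\star|^2$ gives an eigenfunction associated with $\lambda_1(\hat h,A)$ with the extra property $|d^A u|_{\hat h}^2\equiv\lambda_1(\hat h,A)$ at every point of $T^d$. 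I would then test $\lambda_1(h,A)$ against this $u$ via the variational characterization \eqref{minmax}.

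Writing the conformal change as $h=\phi^2\hat h$, any $1$-form $\omega$ satisfies $|\omega|_h^2\,dv_h=\phi^{d-2}|\omega|_{\hat h}^2\,dv_{\hat h}$, so
$$\int_{T^d}|d^A u|_h^2\,dv_h \;=\; \lambda_1(\hat h,A)\int_{T^d}\phi^{d-2}\,dv_{\hat h}, \qquad \int_{T^d}|u|^2\,dv_h \;=\; \int_{T^d}\phi^d\,dv_{\hat h} \;=\; |h|.$$
Substituting into the Rayleigh quotient yields the master inequality
$$|h|\,\lambda_1(h,A)\;\le\;\lambda_1(\hat h,A)\int_{T^d}\phi^{d-2}\,dv_{\hat h}.$$
In $d=2$ the integrand is $1$, giving the stated bound immediately and recovering Theorem \ref{flatisbest2}.

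For $d\ge 3$, the natural tool is Hölder with conjugate exponents $d/(d-2)$ and $d/2$ applied to $1\cdot \phi^{d-2}$:
$$\int_{T^d}\phi^{d-2}\,dv_{\hat h}\;\le\;|\hat h|^{2/d}\Big(\int_{T^d}\phi^d\,dv_{\hat h}\Big)^{(d-2)/d}=|\hat h|^{2/d}|h|^{(d-2)/d}.$$
Since the flat metric in the conformal class $[h]$ is determined only up to homothety, I would choose the representative $\hat h\in[h]$ with $|\hat h|=|h|$, in which case the Hölder bound collapses to $\int_{T^d}\phi^{d-2}\,dv_{\hat h}\le|\hat h|$, and the master inequality becomes exactly $|h|\lambda_1(h,A)\le|\hat h|\lambda_1(\hat h,A)$. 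The main obstacle is this Hölder/normalization step; it is also what pins down the equality case, since Hölder equality forces $\phi$ constant and hence $h$ and $\hat h$ homothetic, in complete parallel with the two-dimensional Theorem \ref{flatisbest2}.
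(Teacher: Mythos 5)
Your argument is correct, and it is in fact more careful than what the paper offers: the paper gives no proof here, asserting only that ``most of the proofs of Sections \ref{sec:optimization} and \ref{sec:conf_inv} apply without major modifications'', which glosses over precisely the point you isolate, namely that for $d\ge 3$ the magnetic Dirichlet energy $\int_{T^d}|d^Au|^2_h\,dv_h$ is no longer conformally invariant. Your master inequality $|h|\lambda_1(T^d,h,A)\le\lambda_1(T^d,\hat h,A)\int_{T^d}\phi^{d-2}\,dv_{\hat h}$ (using the modulus-one flat eigenfunction as test function) and the H\"older step are both right, and they yield the scale-invariant inequality $|h|^{2/d}\lambda_1(T^d,h,A)\le|\hat h|^{2/d}\lambda_1(T^d,\hat h,A)$, which is the form consistent with the paper's later formula $\Lambda_1(T^d,[h])=4\pi^2|\hat h|^{2/d}\mathcal R(L^{\star})^2$. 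One point you should state more forcefully: the normalization $|\hat h|=|h|$ is not merely a convenience that makes your H\"older bound collapse --- it is necessary, because the inequality as literally displayed in the theorem fails for $d\ge 3$ if $\hat h$ is an arbitrary flat representative of the class. Indeed $\lambda_1(T^d,t^2\hat h,A)=t^{-2}\lambda_1(T^d,\hat h,A)$ with $A$ fixed, so $|t^2\hat h|\lambda_1(T^d,t^2\hat h,A)=t^{d-2}|\hat h|\lambda_1(T^d,\hat h,A)$ can be made smaller than the left-hand side by shrinking $t$ (even when $h$ itself is flat). So what you have proved is the correct, scale-invariant version of the statement (equivalently, the stated inequality for the volume-normalized flat representative), and your equality analysis via H\"older (forcing $\phi$ constant, hence $h$ homothetic to $\hat h$) is the right analogue of the two-dimensional Theorem \ref{flatisbest2}; just make the normalization hypothesis explicit in the statement you prove.
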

 Through this section we  adopt the notation $|h|$ to denote the ($d$-dimensional) volume of $(T^d,h)$, namely $|h|:={{\rm vol}(T^d,h)}$.
 Note that, contrarily to the $2$-dimensional case, we need to restrict to conformally flat metrics.  If we restrict to conformally flat metrics, it makes sense to define
$$
\Lambda_1(T^d,[h],A)=\sup_{h_0\in[h]}|h_0|(T^d,h_0,A),
$$
$$
\Lambda_1(T^d,[h])=\sup_{A:dA=0}\Lambda_1(T^d,[h],A)
$$
and
$$
\Lambda_1(T^d)=\inf_{[h]}\Lambda_1(T^d,[h]).
$$
Then we have that if $h$ is conformally flat:
$$
\Lambda_1(T^d,[h])=4\pi^2|\hat h|^{2/d}\mathcal R( L^{\star})^2,
$$
where $\mathcal R( L^{\star})$ is the inradius of the dual lattice $ L^{\star}$.

\medskip

On the other hand, the invariant $\Lambda_1(T^d)$ is a rather complicate object, already in three dimensions. This problem is related to the so-called {\it Delaunay triangulation} (see e.g., \cite{raj_94}) of a point set.
Delaunay triangulations have been extensively studied in $\mathbb R^2$ and also in higher dimensions, and are a central topic in Computational Geometry as they enjoy several optimality properties.  The interested reader may refer to \cite{raj_94} and references therein.


\section{Proof of Theorem \ref{thm_sequences}}\label{sec:proof_main} We assume through this section that $\Sigma_g$ is the (unique) differentiable compact, orientable surface of genus $g$. The pair $(\Sigma_g,h)$ denotes the  surface $\Sigma_g$ endowed with the Riemannian metric $h$. For simplicity of notation, as the genus is fixed in the discussion, we simply write
$$
\lambda_1(h,A)\doteq \lambda_1(\Sigma_g,h,A).
$$
We prove the main theorem for an arbitrary genus $g$. When $g=1$ we decided to provide, at the end of the paper, a simple, self-contained proof using only the uniformization theorem; this is done to illustrate the machinery needed in the general case in the simplest possible geometric situation.  We mainly refer to the classical textbook by Farkas and Kra \cite{FK}.

\subsection{Canonical homology basis and Gram matrix}\label{sub:can_bas} We fix a so-called {\it canonical homology basis} $\chi=(\chi_1,\dots,\chi_{2g})$ in $H_1(\Sigma,\mathbb Z)$. This means that $\chi$ is made up of two families of closed curves $a_j,b_j$ so that
$$
(\chi_1,\dots,\chi_{2g})\doteq(a_1,\dots,a_g,b_1\dots,b_g)
$$
and, if the dot denotes the oriented intersection number (the sum of the number of intersections, counted $+1$ if the orientation of the intersection agrees with the orientation of $\Sigma$, and $-1$ otherwise), we have the relations:
$$
a_j\cdot a_k=0, \quad b_j\cdot b_k=0, \quad a_j\cdot b_k=\delta_{jk}, \quad b_j\cdot a_k=-\delta_{jk}.
$$
It follows that the $2g\times 2g$ intersection matrix $J_{jk}=\chi_j\cdot \chi_k$ of a canonical homology basis is
$$
J=\twomatrix 0{I_g}{-I_g}0
$$
Associated to $\chi$ there is a dual basis of the integral cohomology space $H^1(\Sigma,\mathbb Z)$ which is made-up of closed $1$-forms 
$$
\alpha=(\alpha_1,\dots,\alpha_{2g})
$$
so that, by definition, $\int_{\chi_j}\alpha_k=\delta_{jk}$. The dual basis is determined up to an exact $1$-form on each component. The cohomology class of $\alpha$ is then a basis for the cohomology space 
$H^1(\Sigma_g,\mathbb Z)$ and also of $H^1(\Sigma_g,\mathbb R)$ (over $\mathbb R$). Of course, by the Hodge theorem, we can assume that $\alpha$ consists of harmonic forms. 

\smallskip

Recall from Subsection \ref{sub:spec_inv} that the dual basis $\alpha$ generates a lattice in ${\rm Har}(h)$ denoted $L^{\star}$. The quotient
$$
{\rm Jac}(h)\doteq {\rm Har}(h)/L^{\star}
$$
endowed with the $L^2(h)$ metric, is a flat $2g$-dimensional torus, called {\it Jacobian torus} of $h$. 

\nero The Jacobian torus depends only on the conformal class of $h$. 

\smallskip 

It is clear that ${\rm Jac}(h)$ is isometric with $\real{2g}/\mathbb Z^{2g}$ endowed with the metric $\Gamma_{jk}dx_idx_j$ where 
$$
\Gamma_{jk}=\scal{\alpha_j}{\alpha_k}_{L^2(h)}.
$$
The matrix $\Gamma=\Gamma(h)$ with entries $\Gamma_{jk}$ is called the  {\it Gram matrix} associated to the metric $h$; it is uniquely determined by the canonical homology basis we started with and the conformal class of $h$.

\medskip

 The proof of Theorem \ref{mainthm} is divided in two steps:

\begin{itemize}
\item  {\bf Step 1.} We first prove that if two metrics $h,h_0$ have the same ground state spectrum, then they have the same volume and the same Gram matrix: $\Gamma(h)=\Gamma(h_0)$.  
In particular, ${\rm Jac}(h)={\rm Jac}(h_0)$.

The proof uses the asymptotics of $\lambda_1(h,rA)$ as $r\to 0$, proved in Section \ref{sec:asympt},  and the fact that the fixed homology basis is canonical, so that the associated Gram matrices have unit determinant (thanks to the so-called Riemann period relations). 
\item {\bf Step 2.} We prove that if two metrics $h$ and $h_0$ satisfy ${\rm Jac}(h)={\rm Jac}(h_0)$ then they are conformal.

Here we use the conformal structure of $(\Sigma,h)$ and a famous result in Riemann surface theory, the theorem of Torelli. 
\end{itemize}

\subsection{Proof of Step 1}\label{step1}
We prove here the following theorem:
\begin{thm} \label{firstmain} Assume that the metrics $h$ and $h_0$ have the same ground state spectrum. 
Then $h$ and $h_0$ have the same volume and $\Gamma(h)=\Gamma(h_0)$.
In particular, ${\rm Jac}(h)={\rm Jac}(h_0)$.
\end{thm}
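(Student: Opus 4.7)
The plan is to extract the Gram matrix (up to overall scaling) from the asymptotic behavior of the ground state energies $\mu_{jk,n}(\Sigma_g, h)$ as $n \to \infty$, then to pin down the scaling factor via the normalization that forces the Jacobian torus to have volume one.

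First I would apply Theorem \ref{thm:asymp} with $A = \alpha_j + \alpha_k$ and $r = 1/n$. The asymptotic \eqref{eltwonorm} gives
\[
\lim_{n \to \infty} n^2 \, |h| \, \lambda_1\!\left(h, \tfrac{1}{n}(\alpha_j+\alpha_k)\right) = \|\alpha_j + \alpha_k\|_{L^2(h)}^2 = \Gamma_{jj}(h) + 2\Gamma_{jk}(h) + \Gamma_{kk}(h),
\]
and the same identity holds with $h_0$ in place of $h$. Since by hypothesis $\mu_{jk,n}(h) = \mu_{jk,n}(h_0)$ for all admissible $(j,k,n)$, multiplying by $n^2$ and passing to the limit yields
\[
\frac{\Gamma_{jj}(h) + 2\Gamma_{jk}(h) + \Gamma_{kk}(h)}{|h|} = \frac{\Gamma_{jj}(h_0) + 2\Gamma_{jk}(h_0) + \Gamma_{kk}(h_0)}{|h_0|}
\]
for all $1 \leq j \leq k \leq 2g$. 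Specializing to $j = k$ gives $\Gamma_{jj}(h)/|h| = \Gamma_{jj}(h_0)/|h_0|$; subtracting from the general case and dividing by $2$ recovers $\Gamma_{jk}(h)/|h| = \Gamma_{jk}(h_0)/|h_0|$. Hence the rescaled Gram matrices agree:
\[
\Gamma(h) = c\, \Gamma(h_0), \qquad c = \frac{|h|}{|h_0|} > 0.
\]

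To conclude, I would invoke the fact stated in Subsection \ref{sub:spec_inv} (and made precise in Theorem \ref{detone}) that the Jacobian torus of any metric on $\Sigma_g$ with respect to a canonical homology basis has volume one. Since the Jacobian torus is isometric to $\mathbb{R}^{2g}/\mathbb{Z}^{2g}$ endowed with the constant metric $\Gamma_{jk}\,dx_j\,dx_k$, its volume equals $\sqrt{\det \Gamma}$; thus $\det \Gamma(h) = \det \Gamma(h_0) = 1$. Taking determinants in $\Gamma(h) = c\,\Gamma(h_0)$ gives $c^{2g} = 1$, and since $c > 0$ we deduce $c = 1$. Therefore $|h| = |h_0|$ and $\Gamma(h) = \Gamma(h_0)$, which in turn identifies $\mathrm{Jac}(h)$ and $\mathrm{Jac}(h_0)$ as the same flat torus.

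The only substantive input beyond Theorem \ref{thm:asymp} is the unit-determinant property of the Gram matrix in a canonical basis, which rests on the Riemann bilinear relations. The main obstacle (or more precisely, the essential conceptual ingredient) is therefore recognizing that the combination of the weak-potential asymptotics with the canonical normalization of $\chi$ pins down both the overall scale and the matrix itself; no further magnetic input is needed at this stage, since Step 2 (the Torelli-type argument producing the conformal equivalence from the isometric Jacobian tori) will be carried out separately.
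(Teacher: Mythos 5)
Your overall strategy is exactly the paper's: use the weak-potential asymptotics to show the rescaled Gram matrices coincide, then use the unit-determinant property of the Gram matrix in a canonical homology basis (Theorem \ref{detone}, i.e.\ the Riemann period relations) to fix the scale, giving $|h|=|h_0|$ and $\Gamma(h)=\Gamma(h_0)$. The determinant step and the final algebra are fine.

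There is, however, a gap in the first step as written. Theorem \ref{thm:asymp} is stated for a \emph{co-closed} potential, while the forms $\alpha_j+\alpha_k$ appearing in the definition \eqref{mgs} of the ground state spectrum are the fixed, metric-independent closed forms dual to $\chi$; they are in general neither co-closed for $h$ nor for $h_0$, and they cannot be simultaneously harmonic for both metrics (harmonicity of $1$-forms is only a conformal invariant, and conformality of $h$ and $h_0$ is precisely what is still to be proved). Consequently your identity $\|\alpha_j+\alpha_k\|_{L^2(h)}^2=\Gamma_{jj}(h)+2\Gamma_{jk}(h)+\Gamma_{kk}(h)$ is false in general: the Gram entries are built from the $h$-harmonic representatives, whose $L^2(h)$-norm is strictly smaller than that of a non-harmonic closed representative. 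The repair is the gauge-invariance step the paper performs in Theorem \ref{homothety}: project $\alpha$ orthogonally onto ${\rm Har}(h)$ (resp.\ ${\rm Har}(h_0)$) to get the dual harmonic bases $\alpha^h$ (resp.\ $\alpha^{h_0}$); since $\alpha^h_j-\alpha_j$ is exact, $\lambda_1\bigl(h,\tfrac1n(\alpha_j+\alpha_k)\bigr)=\lambda_1\bigl(h,\tfrac1n(\alpha^h_j+\alpha^h_k)\bigr)$, and Theorem \ref{thm:asymp} may then be applied to the harmonic (hence co-closed) forms, whose $L^2$-norms do give the Gram entries for each metric separately. With that insertion your argument coincides with the paper's proof of Theorem \ref{firstmain}; note also that the unit-determinant fact you quote is itself established inside the paper's proof via the Hodge-star matrix $G=\Gamma J$ and $G^2=-I$, so in a fully self-contained write-up that derivation (or an explicit citation of the Riemann relations) should accompany your appeal to Theorem \ref{detone}.
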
 
This is done in three steps: first we prove that the Gram matrices are homothetic; then we study in more detail the structure of the Gram matrix; finally we use this information to conclude.

\subsubsection{The Gram matrices are homothetic} 

\begin{thm}\label{homothety}  Assume $h$ and $h_0$ have the same ground state spectrum, and let $\Gamma(h)$ (resp. $\Gamma(h_0)$) be the respective Gram matrices relative to the same canonical homology basis $\chi$. Then
$$
\Gamma(h)=c\Gamma(h_0), \quad\text{with}\quad c=\frac{\abs h}{\abs{h_0}}.
$$
\end{thm}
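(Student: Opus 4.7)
The plan is to extract the Gram matrix entries (up to a scale) directly from the ground state spectrum using the weak-potential asymptotics of Theorem~\ref{thm:asymp}. Applying the reformulation \eqref{eltwonorm} to the potential $A=\alpha_j+\alpha_k$ along the sequence $r=1/n$ yields
\[
\lim_{n\to\infty} n^2\,|h|\,\mu_{jk,n}(\Sigma_g,h)
=\lim_{n\to\infty} n^2\,|h|\,\lambda_1\!\Big(\Sigma_g,h,\tfrac1n(\alpha_j+\alpha_k)\Big)
=\|\alpha_j+\alpha_k\|^2_{L^2(h)},
\]
and expanding the norm gives $\Gamma_{jj}(h)+\Gamma_{kk}(h)+2\Gamma_{jk}(h)$. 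Writing the same relation for $h_0$ and using the hypothesis $\mu_{jk,n}(\Sigma_g,h)=\mu_{jk,n}(\Sigma_g,h_0)$ for every admissible $j,k,n$, one obtains, for all $1\le j\le k\le 2g$,
\[
\frac{\Gamma_{jj}(h)+\Gamma_{kk}(h)+2\Gamma_{jk}(h)}{|h|}
=\frac{\Gamma_{jj}(h_0)+\Gamma_{kk}(h_0)+2\Gamma_{jk}(h_0)}{|h_0|}.
\]

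Next I would specialize this identity to $j=k$: the mixed terms collapse and a common factor of $4$ cancels, giving $\Gamma_{jj}(h)/|h|=\Gamma_{jj}(h_0)/|h_0|$ for every $j$. Plugging this back into the identity with $j<k$, the diagonal contributions cancel on both sides, leaving $\Gamma_{jk}(h)/|h|=\Gamma_{jk}(h_0)/|h_0|$ for all off-diagonal entries. Combining the two, I get the matrix identity
\[
\Gamma(h)=\frac{|h|}{|h_0|}\,\Gamma(h_0),
\]
which is exactly the desired homothety with ratio $c=|h|/|h_0|$.

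The argument is essentially a one-line consequence of Theorem~\ref{thm:asymp} together with the bilinear expansion of $\|\alpha_j+\alpha_k\|^2_{L^2(h)}$; the only mild subtlety is to make sure the chosen countable family of potentials $\tfrac1n(\alpha_j+\alpha_k)$, $1\le j\le k\le 2g$, $n\in\mathbb{N}$, is enough to recover \emph{all} entries of the $2g\times 2g$ symmetric matrix $\Gamma(h)$, which it is by polarization. No obstacle is expected at this step: the real work of Theorem~\ref{firstmain} will come afterwards, when one still has to pin down the scalar $c$ (i.e.\ show $|h|=|h_0|$), and this is where the canonical nature of the homology basis and the unit-determinant property of $\Gamma$ coming from the Riemann period relations will be invoked in the next subsection.
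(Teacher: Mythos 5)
Your overall strategy is the same as the paper's: apply Theorem~\ref{thm:asymp} along $r=1/n$ to the potentials $\tfrac1n(\alpha_j+\alpha_k)$, equate the limits for $h$ and $h_0$, and recover the individual entries of $\Gamma$ by polarization (diagonal first, then off-diagonal), exactly as in the paper's ``easy exercise''. However, there is one step that does not work as written. The forms $\alpha_j$ in the definition of the ground state spectrum are a \emph{fixed, metric-independent} dual basis of closed $1$-forms; they need not be co-closed for $h$ (nor for $h_0$), so Theorem~\ref{thm:asymp} and its consequence \eqref{eltwonorm} do not apply directly to $A=\alpha_j+\alpha_k$. Moreover, even granting an extension of the asymptotics, the limit would be the squared $L^2(h)$-norm of the \emph{harmonic part} of $\alpha_j+\alpha_k$, not of $\alpha_j+\alpha_k$ itself (for a non-harmonic closed form the $L^2$-norm strictly exceeds that of its harmonic representative), and it is only the norms of the $h$-harmonic representatives that equal $\Gamma_{jj}(h)+\Gamma_{kk}(h)+2\Gamma_{jk}(h)$, since $\Gamma_{jk}(h)=\scal{\alpha^h_j}{\alpha^h_k}_{L^2(h)}$ is defined through them.

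The repair is short and is precisely what the paper does: project $\alpha_j$ orthogonally onto ${\rm Har}(h)$ to get $\alpha^h_j$ (this changes the form by an exact one, so the periods over $\chi$ are unchanged and $\alpha^h$ is the dual basis of $h$-harmonic forms), use gauge invariance to write $\lambda_1(h,\tfrac1n(\alpha_j+\alpha_k))=\lambda_1(h,\tfrac1n(\alpha^h_j+\alpha^h_k))$, and only then apply Theorem~\ref{thm:asymp} to the co-closed form $\alpha^h_j+\alpha^h_k$; do the same for $h_0$ with the (different) representatives $\alpha^{h_0}_j$. Note that the two metrics necessarily use different harmonic representatives of the same cohomology classes, and it is gauge invariance that makes the compared eigenvalues coincide with the ground state spectrum in the hypothesis. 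With this insertion your polarization argument goes through verbatim and yields $\Gamma(h)=\tfrac{\abs h}{\abs{h_0}}\Gamma(h_0)$.
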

\begin{proof} Given the basis $\alpha=(\alpha_1,\dots,\alpha_{2g})$ of $H^1(\Sigma_g,\mathbb R)$ introduced above, we project this basis orthogonally on ${\rm Har}(h)$ and obtain the basis
$$
\alpha^h=(\alpha^h_1,\dots,\alpha^h_{2g})
$$
of ${\rm Har}(h)$. Since $\alpha^h_k$ differ from $\alpha_k$ by an exact form, we see:
$$
\oint_{\chi_j}\alpha^h_k=\oint_{\chi_j}\alpha_k=\delta_{jk}
$$
that is, $\alpha^h$ is the unique basis of ${\rm Har}(h)$ which is dual to $\chi$.
In a similar manner we obtain the basis 
$$
\alpha^{h_0}=(\alpha^{h_0}_1,\dots,\alpha^{h_0}_{2g})
$$
which is the unique basis of ${\rm Har}(h_0)$ dual to $\chi$. By gauge invariance and our hypothesis we have, for all $j,k,n$:
\begin{equation}\label{gramone}
\begin{aligned}
\lambda_1(h,\frac 1n(\alpha^h_j+\alpha^h_k))&=
\lambda_1(h,\frac 1n(\alpha_j+\alpha_k))\\
&=\lambda_1(h_0,\frac 1n(\alpha_j+\alpha_k))\\
&=\lambda_1(h_0,\frac 1n(\alpha^{h_0}_j+\alpha^{h_0}_k))
\end{aligned}
\end{equation}

We now apply Theorem \ref{thm:asymp} to the $h$-harmonic form $\frac1n(\alpha^h_j+\alpha^h_k))$ and get:
$$
\begin{aligned}
\lim_{n\to\infty}n^2\lambda_1(h,\frac1n(\alpha^h_j+\alpha^h_k))&=\dfrac{1}{\abs{h}}\norm{\alpha^h_j+\alpha^h_k}_{L^2(h)}^2\\
&=\dfrac{1}{\abs{h}}\Big(\Gamma_{jj}(h)+\Gamma_{kk}(h)+2\Gamma_{jk}(h)\Big).
\end{aligned}
$$
We apply Theorem \ref{thm:asymp} to the $h_0$-harmonic form $\frac1n(\alpha^{h_0}_j+\alpha^{h_0}_k))$ and get the corresponding equality; taking into account the equality in \eqref{gramone}, we have, for all $j,k$
$$
\dfrac{1}{\abs{h}}\Big(\Gamma_{jj}(h)+\Gamma_{kk}(h)+2\Gamma_{jk}(h)\Big)=\dfrac{1}{\abs{h_0}}\Big(\Gamma_{jj}(h_0)+\Gamma_{kk}(h_0)+2\Gamma_{jk}(h_0)\Big)
$$
and it is an easy exercise to conclude that
$$
\Gamma_{jk}(h)=\dfrac{\abs{h}}{\abs{h_0}}\Gamma_{jk}(h_0)
$$
for all $j,k$.
\end{proof}

\subsubsection{Riemann relations, the Hodge-star operator and the structure of the Gram matrix}

Here we fix a generic metric $h$ (and the canonical homology basis $\chi$) and write simply $\Gamma$ for $\Gamma(h)$. In what follows, we omit the subscript $h$: therefore, it is tacitly assumed that the operator $\star$ refers to $\star_h$, the Hodge-star operator for the metric $h$. Similarly, we denote $(\alpha_1,\dots,\alpha_{2g})$ the basis of ${\rm Har}(h)$ dual to $\chi$.  From \cite[p. 58]{FK} we see:
$$
J_{kj}=-\scal{\alpha_k}{\star\alpha_j}.
$$
The operator $\star$ preserves harmonic forms. Let $G$ be its matrix in the basis $(\alpha_1,\dots,\alpha_{2g})$. Writing the basis as a column vector (as in \cite{FK}), we see:
$$
\Tre {\star\alpha_1}{\vdots}{\star\alpha_{2g}}=G\Tre {\alpha_1}{\vdots}{\alpha_{2g}}
$$
which means
$
\star\alpha_k=\sum_{m=1}^{2g}G_{km}\alpha_m.
$
Now:
$$
\begin{aligned}
\Gamma_{kj}&=\scal{\alpha_k}{\alpha_j}\\
&=\scal{\star\alpha_k}{\star\alpha_j}\\
&=\sum_{m=1}^{2g}G_{km}\scal{\alpha_m}{\star\alpha_j}\\
&=-\sum_{m=1}^{2g}G_{km}J_{mj}
\end{aligned}
$$
that is
$
\Gamma=-GJ
$
and since $J^2=-I$ we conclude:

\begin{lemme} The matrices $G$ and $\Gamma$ are related by $G=\Gamma J$.
\end{lemme}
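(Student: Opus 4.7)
The plan is extremely short because essentially all the work has already been carried out in the paragraph immediately preceding the lemma. The relation $\Gamma = -GJ$ is obtained there by expanding $\langle \alpha_k, \alpha_j \rangle$, using that the Hodge star is an isometry of ${\rm Har}(h)$ (so $\Gamma_{kj} = \langle \star\alpha_k, \star\alpha_j \rangle$), substituting the definition $\star\alpha_k = \sum_m G_{km}\alpha_m$, and identifying the remaining inner product $\langle \alpha_m, \star\alpha_j \rangle$ as $-J_{mj}$ via the formula $J_{kj} = -\langle \alpha_k, \star\alpha_j \rangle$ taken from \cite{FK}.

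From $\Gamma = -GJ$, I would simply right-multiply by $J$ to get $\Gamma J = -GJ^2$, and then invoke $J^2 = -I_{2g}$. The latter follows at once from the block form
$$
J = \begin{pmatrix} 0 & I_g \\ -I_g & 0 \end{pmatrix}
$$
of the intersection matrix of a canonical homology basis, by a trivial $2\times 2$ block multiplication. Combining these gives $\Gamma J = G$, which is the claim.

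There is no real obstacle: the lemma is a one-line algebraic rearrangement of identities already displayed. The substance of the statement is conceptual rather than technical: it expresses the matrix $G$ of the Hodge star $\star_h$ (which in dimension two depends only on the conformal class $[h]$) as the product $\Gamma J$ of a \emph{metric} object (the Gram matrix) and a purely \emph{topological} object (the intersection matrix of the canonical homology basis). This factorization is exactly what will be needed in the next step: once Theorem \ref{homothety} produces $\Gamma(h) = c\,\Gamma(h_0)$, one can recover both the conformal class (through $G$ and Torelli) and pin down the scaling factor $c$ by exploiting the Riemann period relations $\det \Gamma = 1$, which is why the lemma is recorded in this form here.
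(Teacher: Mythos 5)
Your proposal is correct and follows exactly the paper's argument: the identity $\Gamma=-GJ$ is obtained from the same computation (isometry of $\star$, the definition of $G$, and $J_{kj}=-\scal{\alpha_k}{\star\alpha_j}$), and the lemma then follows by right-multiplying by $J$ and using $J^2=-I$. Nothing is missing.
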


Now we write $\Gamma$ as a block matrix
$
\Gamma=\twomatrix ABCD.
$
Since $\Gamma$ represents a Riemannian metric, $\Gamma$ is symmetric and positive; therefore we have
$$
A=A^t, \quad D=D^t, \quad C=B^t,
$$
moreover $A>0$ and $D>0$. Hence we can write
\begin{equation}\label{gram}
\Gamma=\twomatrix AB{B^t}D
\end{equation}
which implies that
\begin{equation}\label{Gmatrix}
G=\Gamma J=\twomatrix AB{B^t}D\twomatrix 0I{-I}0=\twomatrix {-B}A{-D}{B^t}.
\end{equation}
Knowing that $G^2=-I$ we obtain the following identities:
$$
AD-B^2=I, \quad DB=B^tD, \quad BA=AB^t.
$$
We summarize as follows.

\begin{thm} Let $\Gamma=\twomatrix AB{B^t}D$ be the Gram matrix in a given metric $h$. Then:
\begin{enumerate}[i)]
\item The matrix of the star operator $\star_h$ is $G=\twomatrix {-B}A{-D}{B^t}$;

\item We have the identities
$$
AD-B^2=I, \quad DB=B^tD, \quad BA=AB^t.
$$
\end{enumerate}
\end{thm}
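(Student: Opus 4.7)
The statement is essentially a consequence of two ingredients: (1) the Riemann period relation $J_{kj} = -\langle \alpha_k, \star \alpha_j \rangle$ cited from Farkas--Kra, together with the fact that $\star$ is an $L^{2}$-isometry preserving harmonic forms, and (2) the involutive identity $\star^{2} = -\mathrm{id}$ acting on $1$-forms on an oriented surface (which follows from $\star^{2} = (-1)^{k(n-k)}$ with $n=2$, $k=1$). The plan is to translate both into block-matrix statements about $\Gamma$, $G$ and $J$.

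For part (i), I would introduce $G$ by $\star \alpha_k = \sum_m G_{km}\alpha_m$ and expand
\[
\Gamma_{kj} = \langle \alpha_k,\alpha_j\rangle = \langle \star\alpha_k,\star\alpha_j\rangle = \sum_m G_{km}\langle \alpha_m,\star\alpha_j\rangle = -\sum_m G_{km}J_{mj},
\]
giving $\Gamma = -GJ$. Since $J^{2} = -I$, right-multiplying by $J$ yields $G = \Gamma J$. Then the block computation
\[
G = \begin{pmatrix} A & B \\ B^t & D \end{pmatrix}\begin{pmatrix} 0 & I \\ -I & 0 \end{pmatrix} = \begin{pmatrix} -B & A \\ -D & B^t \end{pmatrix}
\]
delivers the desired form. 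The symmetry block structure of $\Gamma$ (i.e.\ the appearance of $A=A^{t}$, $D=D^{t}$, and the $(2,1)$-block equal to $B^{t}$) is already noted in the excerpt as a direct consequence of $\Gamma$ being the matrix of a Riemannian inner product.

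For part (ii), I would impose $G^{2} = -I$, which holds because $\star^{2} = -\mathrm{id}$ on $1$-forms in dimension $2$. Multiplying
\[
\begin{pmatrix} -B & A \\ -D & B^t \end{pmatrix}\begin{pmatrix} -B & A \\ -D & B^t \end{pmatrix} = \begin{pmatrix} B^{2} - AD & -BA + AB^{t} \\ DB - B^{t}D & -DA + (B^{t})^{2} \end{pmatrix}
\]
and equating to $-I$ gives the $(1,1)$ relation $AD - B^{2} = I$, the $(1,2)$ relation $BA = AB^{t}$, and the $(2,1)$ relation $DB = B^{t}D$; the $(2,2)$ entry reduces to the transpose of the first and is redundant.

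There is no real obstacle here beyond bookkeeping: the content of the theorem is the combination of the Riemann period relation and $\star^{2} = -I$, and everything else is a block-matrix calculation. The only point that requires care is keeping the sign conventions consistent between $\Gamma = -GJ$ and $G = \Gamma J$, which is why $J^{2} = -I$ is invoked explicitly.
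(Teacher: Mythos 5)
Your proposal is correct and follows essentially the same route as the paper: the identity $\Gamma=-GJ$ derived from the Farkas--Kra period relation and the $L^2$-isometry of $\star$, the block computation $G=\Gamma J$, and the relations extracted from $G^{2}=-I$ (with the $(2,2)$ block redundant by transposition) are exactly the steps in the paper's argument.
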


\smallskip

\begin{rem}
Recall the definition of real symplectic matrices:
$$
{\rm Sp}_{2g}(\mathbb R)=\{P\in M_{2g\times 2g}(\reals): P^tJP=J\}.
$$
The Riemann period relations $ii)$ above are equivalent to the identity
$
\Gamma J\Gamma=J
$
and since $\Gamma=\Gamma^t$ and $\Gamma$ is positive we have:

\begin{thm}\label{detone} The Gram matrix $\Gamma$ is a (real) symplectic matrix of determinant $1$. In particular, the Jacobian torus of any Riemannian surface $(\Sigma,h)$ is a flat $2g$-dimensional torus of unit volume.  
\end{thm}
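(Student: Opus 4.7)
The plan is to deduce the statement directly from the structural results established in the previous theorem, together with the positive-definiteness of $\Gamma$. The starting point is the observation made just before the theorem: the three Riemann period relations $AD-B^2=I$, $DB=B^tD$, $BA=AB^t$ assembled from the block decomposition of $\Gamma$ are equivalent to the single matrix identity $\Gamma J\Gamma=J$. So my first move is to verify this equivalence explicitly by multiplying the blocks of $\Gamma J=G$ (already computed in \eqref{Gmatrix}) against the blocks of $\Gamma$ in \eqref{gram}, and matching the four resulting blocks of $G\Gamma$ with those of $J$. Alternatively, and more cleanly, one can derive $\Gamma J\Gamma=J$ in one line from $G^2=-I$: since $G=\Gamma J$, we have $(\Gamma J)(\Gamma J)=-I$, and multiplying on the right by $-J$ (using $J^2=-I$, i.e.\ $J^{-1}=-J$) gives $\Gamma J\Gamma=J$.

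Once $\Gamma J\Gamma=J$ is in hand, symplecticity is immediate: $\Gamma$ is symmetric (as it represents a Riemannian metric), so $\Gamma^t=\Gamma$, and therefore $\Gamma^tJ\Gamma=J$, which is the defining relation of ${\rm Sp}_{2g}(\mathbb{R})$.

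Next I would extract the determinant. Taking determinants in $\Gamma J\Gamma=J$ yields
\[
(\det\Gamma)^2\,\det J=\det J.
\]
Since $J=\twomatrix 0{I_g}{-I_g}0$ satisfies $\det J=1$ (this is standard: $\det J=(-1)^{g^2}\det(-I_g)\det(I_g)=(-1)^{g^2+g}=1$ because $g^2+g$ is even), we conclude $(\det\Gamma)^2=1$, hence $\det\Gamma=\pm1$. But $\Gamma$ is positive definite, so $\det\Gamma>0$, and therefore $\det\Gamma=1$.

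Finally, I would translate this into the geometric statement about the Jacobian torus. By construction, ${\rm Jac}(h)$ is isometric to $\mathbb{R}^{2g}/\mathbb{Z}^{2g}$ equipped with the flat metric $\sum_{j,k}\Gamma_{jk}\,dx_j\,dx_k$. Its volume is
\[
{\rm vol}({\rm Jac}(h))=\int_{[0,1]^{2g}}\sqrt{\det\Gamma}\,dx_1\cdots dx_{2g}=\sqrt{\det\Gamma}=1,
\]
which yields the claim. The only real step here is the identity $\Gamma J\Gamma=J$; everything else is linear algebra. I do not foresee a genuine obstacle, since all the heavy lifting (the identification $G=\Gamma J$ via the duality $J_{kj}=-\langle\alpha_k,\star\alpha_j\rangle$, and the use of $\star^2=-\mathrm{id}$ on $1$-forms in dimension $2$) has already been carried out in the lemma and theorem immediately preceding the statement.
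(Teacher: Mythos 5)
Your proposal is correct and follows essentially the same route as the paper: it rests on the identities $G=\Gamma J$ and $G^2=-I$ established just before the statement, packaged as $\Gamma J\Gamma=J$, combined with the symmetry and positive definiteness of $\Gamma$ to get symplecticity and $\det\Gamma=1$, and then the volume of $\mathbb{R}^{2g}/\mathbb{Z}^{2g}$ with metric $\Gamma_{jk}\,dx_j\,dx_k$ equals $\sqrt{\det\Gamma}=1$. Your one-line derivation of $\Gamma J\Gamma=J$ from $(\Gamma J)^2=-I$ and $J^{-1}=-J$ is just a streamlined version of the paper's observation that the Riemann period relations are equivalent to this identity, so no genuinely new argument is involved.
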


To finish the remark, recall Siegel's space (see \cite{BuSa}):
$$
\mathcal H_{2g}=\{P\in \mathcal P_{2g}:PJP=J\}
$$
where $ \mathcal P_{2g}$ (positive square matrices of determinant $1$) is acted upon by ${\rm Sp}_{2g}(\mathbb Z)$. The quotient
$
{\rm Sp}_{2g}(\mathbb Z)/\mathcal H_{2g}
$
parametrizes the principally polarized abelian varieties of complex dimension $g$. Note that the Gram matrix $\Gamma\in \mathcal H_{2g}$.
\end{rem}
\subsubsection{Conclusion of the proof of Theorem \ref{firstmain}}
From Theorem \ref{homothety} we know that $\Gamma(h)=\frac{\abs{h}}{\abs{h_0}}\Gamma(h_0)$. Since $\Gamma(h)$ and $\Gamma(h_0)$ have both determinant $1$ by Theorem \ref{detone} we must have $\abs h=\abs{h_0}$ and then $\Gamma(h)=\Gamma(h_0)$.
\qed


\subsection{Proof of Step 2: conformal structure and Torelli's theorem} To finish the proof of the main result, we will prove the following:

\begin{thm}\label{secondmain} Assume that the metrics $h$ and $h_0$ satisfy ${\rm Jac}(h)={\rm Jac}(h_0)$. Then $h$ and $h_0$ are conformal. 
\end{thm}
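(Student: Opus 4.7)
The plan is first to upgrade the hypothesis $\mathrm{Jac}(h) = \mathrm{Jac}(h_0)$ from an equality of flat Riemannian tori to an isomorphism of principally polarized abelian varieties, then apply Torelli's theorem to obtain a biholomorphism between the two Riemann surface structures on $\Sigma_g$ induced by $h$ and $h_0$, and finally translate this back into a conformal statement about the metrics $h, h_0$ themselves.

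\textbf{Step A: The full abelian-variety structure is encoded in $\Gamma$.} By Step 1 the hypothesis is equivalent to $\Gamma(h) = \Gamma(h_0)$ with respect to the fixed canonical homology basis $\chi$. The Jacobian torus carries more than its flat metric: the Hodge-star operator $\star_h$ acts on $\mathrm{Har}(h)$ with $\star_h^2 = -I$, and therefore descends to a complex structure on $\mathrm{Jac}(h)$. Its matrix in the $\chi$-dual harmonic basis was computed in the preceding subsection to be $G(h) = \Gamma(h)\,J$, where $J$ is the intersection matrix of $\chi$, a purely topological object independent of the metric. The principal polarization of $\mathrm{Jac}(h)$ is given by $J$ as well. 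Hence $\Gamma(h) = \Gamma(h_0)$ immediately forces $G(h) = G(h_0)$ and identical polarizations, so that $\mathrm{Jac}(h)$ and $\mathrm{Jac}(h_0)$ agree as \emph{principally polarized abelian varieties}, not merely as flat tori.

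\textbf{Step B: Apply Torelli.} Let $J_h$, $J_{h_0}$ denote the complex structures on the oriented surface $\Sigma_g$ associated respectively to the conformal classes $[h]$ and $[h_0]$ (an orientation-compatible complex structure on an oriented surface \emph{is} the datum of a conformal class). By the classical global Torelli theorem (see \cite{FK}), two compact Riemann surfaces with isomorphic principally polarized Jacobians are biholomorphic. Applied to $(\Sigma_g, J_h)$ and $(\Sigma_g, J_{h_0})$ this produces a biholomorphism
\[
f \colon (\Sigma_g, J_h) \longrightarrow (\Sigma_g, J_{h_0}).
\]

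\textbf{Step C: Conformal equivalence at the metric level.} In real dimension two, a biholomorphism between Riemann surfaces is exactly an orientation-preserving conformal diffeomorphism between any Riemannian metrics representing the two complex structures. Thus $f^{*} h_0$ lies in the conformal class of $h$: there is a positive smooth function $\varphi$ on $\Sigma_g$ with $f^{*} h_0 = \varphi\, h$. This is the precise sense in which $h$ and $h_0$ are conformal — the two metrics define the same conformal structure on $\Sigma_g$ up to the self-diffeomorphism $f$ furnished by Torelli, equivalently $(\Sigma_g, h)$ and $(\Sigma_g, h_0)$ are conformally equivalent Riemannian surfaces.

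The main obstacle is Step A: one must observe that no data beyond the Gram matrix $\Gamma$ is needed to recover the complete complex-analytic structure of $\mathrm{Jac}(h)$, thanks to the identity $G = \Gamma J$ together with the topological nature of $J$. Once that is in place, Steps B and C are formal applications of Torelli and the two-dimensional dictionary between complex structures and conformal classes; the depth of the argument is concentrated in the invocation of Torelli's rigidity theorem.
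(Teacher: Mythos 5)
Your argument is correct and follows the same essential route as the paper: reduce the hypothesis to equality of the Gram matrices with respect to the fixed canonical homology basis, observe that the Hodge star acts on harmonic forms with matrix $G=\Gamma J$ (with $J$ purely topological), and conclude via Torelli. The one genuine difference is the form of Torelli you invoke: you use the principally polarized abelian variety version (isomorphic p.p.\ Jacobians imply biholomorphic curves), taking as given the standard identification of $({\rm Har}(h)/L^{\star},\,\star_h,\,J)$ with the Jacobian variety of $(\Sigma_g,J_h)$ together with its theta polarization, whereas the paper states Torelli in terms of normalized period matrices and therefore carries out explicitly the computation that supplies this bridge, namely $Z=-D^{-1}B^{t}+iD^{-1}$ for $\Gamma=\twomatrix AB{B^t}D$. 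Your route is shorter, but its Step A rests on that (standard, correct, yet unverified here) identification of the metric Jacobian torus with the algebro-geometric Jacobian; the paper's period-matrix computation is exactly the substitute for it. Your Step C, which records the biholomorphism $f$ and states the conclusion as $f^{*}h_0=\varphi\,h$, is a slightly more careful rendering of ``$h$ and $h_0$ are conformal'' than the paper's corresponding remark.
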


In fact, it is enough to show that $h$ and $h_0$ have the same Gram matrix with respect to the same fixed canonical homology basis $\chi$. The proof of Theorem \ref{secondmain} is divided into three steps. The main tool that we use is a fundamental result in Riemann surfaces theory, Torelli's theorem, which relates the conformal structure of a Riemann surface with its normalized period matrix. Hence we will start by associating in a natural way a Riemann surface to a metric $h$; then we will describe the normalized period matrix of the Riemann surface with respect to a canonical homology basis; finally we prove that the Gram matrix determines the normalized period matrix and conclude.

\subsubsection{A Riemann surface naturally associated to the metric $h$ }

This subsection is quite standard, we add it for convenience of the reader. Again let $(\Sigma,h)$ be an oriented Riemannian manifold of dimension $2$. There is a natural way to introduce an almost complex structure $J_h$ on $\Sigma$, by setting
$$
J_h\doteq\star_h,
$$
where $\star_h$ is the Hodge-star operator induced by the Riemannian metric, which is globally defined because $\Sigma$ is orientable. 

\begin{prop} The almost complex structure defined above is integrable, and uniquely defines a Riemann surface, denoted $(\Sigma,J_h)$. 
\end{prop}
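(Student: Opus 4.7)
The plan is to verify first that $J_h$ is genuinely an almost complex structure on $\Sigma$, then establish its integrability via isothermal coordinates, and finally note that the resulting holomorphic atlas is unique.

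\textbf{Step 1: $J_h$ is an almost complex structure.} On an oriented Riemannian surface, the Hodge star $\star_h$ acts on $1$-forms and satisfies $\star_h^2 = (-1)^{k(n-k)}\mathrm{Id} = -\mathrm{Id}$ when $k=1$, $n=2$. Transporting $\star_h$ to $T\Sigma$ via the metric duality $\sharp:T^*\Sigma\to T\Sigma$ yields an endomorphism $J_h$ of $T\Sigma$ with $J_h^2 = -\mathrm{Id}$. Pointwise, $J_h$ is a rotation by $\pi/2$ in the oriented plane $(T_p\Sigma, h_p)$, so it is smooth, metric-preserving, and compatible with the orientation.

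\textbf{Step 2: Integrability via isothermal coordinates.} In dimension $2$, the Nijenhuis tensor of any almost complex structure vanishes identically for dimensional reasons, so the Newlander--Nirenberg criterion would already apply; however, the cleanest route is to invoke the classical theorem of Korn--Lichtenstein (or Gauss in the real-analytic case) asserting that around every point $p\in\Sigma$ there exist local coordinates $(u,v)$ in which $h = \mu(u,v)(du^2 + dv^2)$ for some positive smooth function $\mu$. In such isothermal coordinates, the Hodge star sends $du\mapsto dv$ and $dv\mapsto -du$, so $J_h\,\partial_u = \partial_v$ and $J_h\,\partial_v = -\partial_u$, i.e.\ $J_h$ coincides with multiplication by $i$ under the identification $z = u+iv$. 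Therefore $\Sigma$ is covered by charts into $\mathbb{C}$ in which $J_h$ is the standard complex structure.

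\textbf{Step 3: Holomorphicity of transitions and uniqueness.} If $(u,v)$ and $(u',v')$ are two overlapping isothermal charts with $h = \mu(du^2+dv^2) = \mu'(du'^2+dv'^2)$, the transition map is conformal and orientation-preserving, hence satisfies the Cauchy--Riemann equations and is holomorphic. The collection of all isothermal charts therefore forms a holomorphic atlas, and since any two such atlases admit a common refinement by isothermal charts, the maximal holomorphic atlas is unique. This produces the Riemann surface $(\Sigma,J_h)$, and $J_h$ is exactly the almost complex structure induced by multiplication by $i$ in any holomorphic chart.

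\textbf{Main obstacle.} The only non-trivial ingredient is the existence of isothermal coordinates, which is a genuinely analytic result (solving an elliptic Beltrami-type equation in the smooth case). Once that is invoked, everything else is algebraic bookkeeping: the compatibility $J_h^2=-\mathrm{Id}$, the conformality of transitions, and the fact that on an oriented surface any conformal change of isothermal coordinates is holomorphic. I would treat the existence of isothermal coordinates as a black box (citing, e.g., the relevant chapter of Farkas--Kra \cite{FK}) rather than re-proving it.
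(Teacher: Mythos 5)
Your argument is correct, but it takes a somewhat different route from the paper. The paper verifies the Newlander--Nirenberg criterion: it introduces isothermal coordinates, observes that in them $J_h$ has constant coefficients, concludes that the Nijenhuis tensor vanishes on coordinate fields and hence identically, and then quotes Newlander--Nirenberg to get integrability. You instead use the isothermal coordinates themselves as the complex charts: $J_h$ becomes multiplication by $i$ in $z=u+iv$, transition maps between (positively oriented) isothermal charts are conformal and orientation-preserving, hence holomorphic, and the resulting maximal atlas is unique. This bypasses Newlander--Nirenberg entirely, which is arguably cleaner, since in dimension two the Newlander--Nirenberg theorem carries no content beyond the Korn--Lichtenstein theorem on isothermal coordinates --- the single analytic ingredient both proofs rest on. Your side remark that the Nijenhuis tensor vanishes in dimension two for purely algebraic reasons (it is antisymmetric and $N(X,JX)=0$, while $\{X,J_hX\}$ spans each tangent plane) is also correct and in fact shows that the paper's coordinate computation of $N$ is not where the real work lies. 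Two small points of care in your write-up: you should say explicitly that the transported operator is $J_h=\sharp\circ\star_h\circ\flat$ (the paper's $J_h\doteq\star_h$ is an abuse of notation, which you handle correctly), and in Step 3 you should restrict to isothermal charts compatible with the fixed orientation, since a conformal transition with a negatively oriented chart would be anti-holomorphic; with that proviso the uniqueness statement is exactly the assertion that the complex structure whose charts linearize $J_h$ is unique.
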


Needless to say, the Hodge-star operator and the family of harmonic $1$-forms on $(\Sigma,h)$ and its associated Riemann surface $(\Sigma,J_h)$ coincide. 

\begin{proof} According to the  Newlander-Nirenberg theorem, $J$ is integrable if and only if the Nijenhuis tensor
$$
N(X,Y)=[JX,JY]-J[JX,Y]-J[X,JY]-[X,Y]
$$
acting on the vector fields $X,Y$ vanishes identically.  Introduce local isothermal coordinates in a neighborhood of any given point, so that the metric is expressed as $ds^2=\lambda(x,y)(dx^2+dy^2)$. As the Hodge-star operator is conformally invariant, we see that
$$
J\derive{}{x}=\derive{}{y}, \quad J\derive{}{y}=-\derive{}{x},
$$
and $N$ vanishes on any pair of coordinate fields, because on such fields the components of $J$ are constant and all brackets vanish. Since $N$ is a tensor, it must vanish on every pair of vector fields. 
\end{proof}

 Observe that $(\Sigma,J_h)$ and $(\Sigma,J_{h_0})$ are biholomorphic if and only if the metrics $h$ and $h_0$ are conformal.

\subsubsection{Normalized period matrix} 
We follow Farkas and Kra \cite[pp. 56-64]{FK}. Fix a canonical homology basis $\chi=(\chi_1,\dots,\chi_{2g})$ of the Riemann surface $\Sigma$, and let $\omega$ be a holomorphic differential. Then the {\it period vector} of $\omega$ with respect to $\chi$ is
$$
(\chi,\omega)=\left(\oint_{\chi_1}\omega,\dots,\oint_{\chi_{2g}}\omega\right)\in \mathbb C^{2g}.
$$
Let now $\omega=(\omega_1,\dots,\omega_g)$ be a basis for the complex vector space $\mathcal H$ of holomorphic differentials. The {\it period matrix} of $\omega$ with respect to $\chi$ is the $g\times 2g$ matrix $(\chi,\omega)$ having rows $(\chi,\omega_1),\dots,(\chi,\omega_g)$. We can write it
$$
(\chi,\omega)=(B,C)
$$
where $B$ and $C$  are $g\times g$ matrices. It is standard that $B$ and $C$ are non-singular $g\times g$. By a suitable change of basis in $\mathcal H$  the right-hand side is written $(I,B^{-1}C)$. This fact, and the Riemann period relations, have the following consequence (see e.g., \cite[pp. 3-5]{martens_th})

\begin{thm} There is a unique basis $\zeta=(\zeta_1,\dots,\zeta_g)$ of $\mathcal H$ such that the period matrix of $\zeta$ is of the form
$$
(I,Z),
$$
where $Z=X+iY$ and $Y$ are real symmetric $g\times g$ matrices and $Y$ is positive definite. 
\end{thm}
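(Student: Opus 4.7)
The plan is to exploit the nonsingularity of the $g\times g$ blocks of the period matrix in order to normalize the $a$-periods, and then to invoke the Riemann bilinear relations (the very same relations referenced just above in the excerpt) to derive the symmetry and positivity of the imaginary part.

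First, writing $(\chi,\omega)=(B,C)$ with $B,C$ nonsingular $g\times g$ matrices, I would define the new basis of $\mathcal{H}$ by $\zeta=B^{-1}\omega$, viewing $\omega$ and $\zeta$ as column vectors of length $g$. Since $B$ is invertible, this is a genuine change of basis of $\mathcal{H}$, and by linearity the period matrix of $\zeta$ is
\[
(\chi,\zeta)=B^{-1}(B,C)=(I,Z),\qquad Z:=B^{-1}C.
\]
By construction the $a$-periods of $\zeta$ are $\oint_{a_k}\zeta_j=\delta_{jk}$, while the $b$-periods are $\oint_{b_k}\zeta_j=Z_{jk}$.

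Next I would show $Z=X+iY$ is symmetric with $Y$ positive definite, using the Riemann bilinear relations applied to the basis $\zeta$. The first Riemann relation says that for any two holomorphic differentials $\eta,\eta'$ on $\Sigma$,
\[
\sum_{l=1}^{g}\Bigl(\oint_{a_l}\eta\,\oint_{b_l}\eta'-\oint_{b_l}\eta\,\oint_{a_l}\eta'\Bigr)=0.
\]
Applied to $\eta=\zeta_j$ and $\eta'=\zeta_k$, this collapses to $Z_{kj}-Z_{jk}=0$, so $Z$ is symmetric. For positivity, I would apply the second Riemann relation to an arbitrary real linear combination $\eta=\sum_{j}v_j\zeta_j$ with $v\in\mathbb{R}^{g}\setminus\{0\}$: since $\oint_{a_l}\eta=v_l$ and $\oint_{b_l}\eta=(Zv)_l$, a direct computation (using $\bar v=v$) gives
\[
0<i\sum_{l=1}^{g}\Bigl(\oint_{a_l}\eta\,\overline{\oint_{b_l}\eta}-\oint_{b_l}\eta\,\overline{\oint_{a_l}\eta}\Bigr)=2\,v^{t}Y v,
\]
so $Y$ is positive definite. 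This is the technical heart of the argument; it is also the step where one really uses that $\chi$ is a canonical homology basis (the Riemann relations are stated in that form).

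Finally, for uniqueness, suppose $\zeta'$ is any basis of $\mathcal{H}$ whose period matrix with respect to $\chi$ is of the normalized form $(I,Z')$. Writing $\zeta'=P\zeta$ for some invertible $P\in{\rm GL}_{g}(\mathbb{C})$, the period matrix of $\zeta'$ equals $P(I,Z)=(P,PZ)$. Matching the first block forces $P=I$, hence $\zeta'=\zeta$ and $Z'=Z$. Since the existence (via $\zeta=B^{-1}\omega$), the properties of $Z$, and uniqueness are all established, the theorem follows. The main obstacle is really only the correct bookkeeping in the second Riemann relation; the rest is linear algebra.
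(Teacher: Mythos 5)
Your proposal is correct and follows essentially the same route as the paper, which obtains the normalized basis by the change of basis $\zeta=B^{-1}\omega$ (using the nonsingularity of $B$) and then derives the symmetry of $Z$ and the positive definiteness of $Y$ from the Riemann period relations, as in the cited references of Farkas--Kra and Martens. Your bookkeeping in both bilinear relations and the uniqueness argument via $P(I,Z)=(P,PZ)$ are accurate, so the proof is complete.
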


\nero The matrix $(I,Z)$ is as above is called the {\it normalized period matrix}.
It is uniquely determined by the choice of the canonical homology basis. 
%


\medskip

Now assume that we choose another canonical homology basis, say $\tilde\chi=(\tilde\chi_1,\dots,\tilde\chi_{2g})$;  then $\tilde\chi=\chi M$ for a symplectic matrix $M\in{\rm Sp}_{2g}(\mathbb Z)$. 
Then:
$$
(I,Z)M=(B,C)
$$
for some invertible $B,C$, and the normalized period matrix with respect to $\tilde\chi$ and $(\tilde\zeta_1,\dots,\tilde\zeta_g)$ will be $(I,Z')$ with $Z'=B^{-1}C$. Therefore we say that 

\nero Two normalized period matrices $(I,Z)$ and $(I,Z')$ are said to be {\it symplectically equivalent} if there is a symplectic matrix $M$ such that $(I,Z)M=(B,C)$ and $B^{-1}C=Z'$. 

\subsubsection{Statement of Torelli's theorem  and conclusion of the proof} 

We state Torelli's Theorem in the following form (see \cite{martens_annals,martens_th}).

\begin{thm} Let $\Sigma$ and $\Sigma'$ be two compact Riemann surfaces of genus $g$ and let $(I,Z)$ and $(I,Z')$ be the normalized period matrices with respect to canonical homology bases $\chi$ of $\Sigma$ and $\chi'$ of $\Sigma'$, respectively. If $(I,Z)$ and $(I,Z')$ are symplectically equivalent, then $\Sigma$ and $\Sigma'$ are conformally equivalent.
\end{thm}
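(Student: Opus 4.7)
Torelli's theorem is a classical deep result in the theory of Riemann surfaces, first proved by Torelli (1913), with later proofs by Weil, Andreotti, and others. The honest plan in a paper of this flavor is simply to cite it (as \cite{FK} or \cite{martens_annals,martens_th} do), but let me sketch the geometric strategy of the Andreotti-style proof.

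The plan is to reformulate the statement in terms of principally polarized abelian varieties and then recover the Riemann surface from this structure. First I would interpret the normalized period matrix $(I,Z)$ as the datum of a PPAV: the complex torus $J=\C^g/(\mathbb Z^g+Z\mathbb Z^g)$ together with the principal polarization encoded by the Riemann theta divisor $\Theta=\{z:\theta(z,Z)=0\}$. The symplectic equivalence relation between normalized period matrices introduced in the excerpt is precisely the condition that the corresponding PPAVs $(J,\Theta)$ and $(J',\Theta')$ are isomorphic as PPAVs. Hence the theorem reduces to the statement that a PPAV coming from a compact Riemann surface $\Sigma$ of genus $g$ determines $\Sigma$ up to biholomorphism.

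Next I would reconstruct $\Sigma$ from $(J,\Theta)$ via the Gauss map of the theta divisor. Let $\Theta_{sm}$ be the smooth locus of $\Theta$ and let $\gamma:\Theta_{sm}\to \mathbb P^{g-1}$ be the Gauss map, which assigns to a point its translated tangent hyperplane at the origin. Andreotti's key observation is that, in the non-hyperelliptic case with $g\geq 3$, the branch locus of $\gamma$ coincides with the projective dual of the canonical image $\phi_K(\Sigma)\subset\mathbb P^{g-1}$. By projective biduality one then recovers the canonical model of $\Sigma$, and hence $\Sigma$ itself as an abstract Riemann surface, purely from $(J,\Theta)$. The hyperelliptic and low-genus cases ($g=2,3$) are handled by separate, easier arguments using the Kummer variety and the Riemann singularity theorem respectively.

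The main obstacle is the detailed analysis of the singularities of $\Theta$ needed to identify the branch locus of the Gauss map with the canonical image; this relies on the Riemann singularity theorem, which describes $\mathrm{mult}_x\Theta$ in terms of the dimension of a certain linear system on $\Sigma$. Once Torelli is invoked, the application to Theorem \ref{secondmain} is immediate: the hypothesis $\mathrm{Jac}(h)=\mathrm{Jac}(h_0)$, combined with Step 1, gives equal Gram matrices and hence equality of the Hodge-star operators in the common basis; this forces the normalized period matrices of the Riemann surfaces $(\Sigma,J_h)$ and $(\Sigma,J_{h_0})$ (with respect to the same canonical homology basis $\chi$) to coincide, hence to be symplectically equivalent, so Torelli yields a conformal equivalence between them, which means $h$ and $h_0$ lie in the same conformal class.
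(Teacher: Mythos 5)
You have correctly identified that this statement is Torelli's theorem itself, which the paper does not prove but simply imports from the literature (\cite{martens\_annals,martens\_th}), exactly as you propose; your concluding paragraph also matches how the paper then applies it to deduce Theorem \ref{secondmain}. Your sketch of Andreotti's argument (theta divisor, Gauss map, biduality recovering the canonical curve, with hyperelliptic and low-genus cases treated separately) is a faithful outline of a standard proof, so there is nothing to correct.
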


Now recall that there is a unique differentiable structure underlying any compact Riemann surface, and we can choose a common canonical homology basis $\chi$ for $\Sigma$ and $\Sigma'$. We conclude that if the normalized period matrices of $\Sigma$ and $\Sigma'$ with respect to $\chi$ coincide, then $\Sigma$ and $\Sigma'$ are conformally equivalent.

\medskip

In order to conclude the proof of Theorem \ref{secondmain} it remains to prove that the Gram matrix determines the normalized period matrix.


\medskip

We follow Farkas and Kra \cite[p. 56]{FK}. We fix a canonical homology basis $(\chi_1,\dots,\chi_{2g})$ with dual basis of harmonic forms $(\alpha_1,\dots,\alpha_{2g})$. The Gram matrix $\Gamma$ becomes a block matrix, each block being $g\times g$:
$$
\Gamma=\twomatrix ABCD.
$$
Next, we introduce the matrix $G$ representing the Hodge star operator $\star$ of $H$ with respect to the basis $(\alpha_1,\dots,\alpha_{2g})$. If
$
\mathcal A=(\alpha_1,\dots,\alpha_{2g})^t,
$
then
$$
\star\mathcal A=G\mathcal A.
$$
Further work shows that, if 
$
G=\twomatrix{\lambda_1}{\lambda_2}{\lambda_3}{\lambda_4}
$
then 
$
\Gamma=\twomatrix{\lambda_2}{-\lambda_1}{\lambda_4}{-\lambda_3}
$
and we have the identities:
$$
\lambda_1=-B, \quad \lambda_2=A, \quad \lambda_3=-D, \quad\lambda_4=B^t.
$$
Jumping to (2.8.1) one sees that the normalized period matrix $(I,Z)$ (in Farkas-Kra notation $Z$ is denoted $\Pi$) is such that
$$
Z=(-\lambda_3)^{-1}\lambda_1^t+i(-\lambda_3)^{-1}.
$$
By the previous identifications:
\begin{equation}\label{npm}
Z=-D^{-1}B^t+iD^{-1}.
\end{equation}
Then, the Gram matrix $\Gamma$ uniquely determines the normalized period matrix through the identities \eqref{npm}. Viceversa, if $Z=X+iY$ then using Riemann's period relations one sees that the Gram matrix is:
$$
\Gamma=\twomatrix{Y+XY^{-1}X}{-XY^{-1}}{-Y^{-1}X}{Y^{-1}}
$$
We summarize in the following theorem.

\begin{thm} The Gram matrix $\Gamma$ uniquely determines the normalized period matrix $Z$, and viceversa (both matrices being associated to the same canonical homology basis). 
\end{thm}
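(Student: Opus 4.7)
The plan is to give a direct, formula-based proof exploiting the block structure of $\Gamma$ and the explicit identities derived above. Write $\Gamma=\twomatrix{A}{B}{B^t}{D}$ and recall from the previous computations in the excerpt that $A,D$ are symmetric and positive, with $AD-B^2=I$, $DB=B^tD$, and $BA=AB^t$.

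For the forward direction, the formula $Z=-D^{-1}B^t+iD^{-1}$ already established via (\ref{npm}) does all the work: from $\Gamma$ one reads off the blocks $B$ and $D$ and immediately writes down $Z$. One only has to verify that the resulting $Z$ lies in the Siegel upper half-space; symmetry of $Z$ means $D^{-1}B^t=(D^{-1}B^t)^t=BD^{-1}$, which is exactly the Riemann relation $DB=B^tD$, and positivity of the imaginary part $D^{-1}$ follows from $D>0$.

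For the reverse direction, given $Z=X+iY$ with $X=X^t$ and $Y>0$, define
\[
D:=Y^{-1},\qquad B:=-XY^{-1},\qquad A:=Y+XY^{-1}X,
\]
and assemble $\Gamma=\twomatrix{A}{B}{B^t}{D}$. I then verify, by short matrix computations using only the symmetry of $X$ and $Y$, the three identities $AD-B^2=I$, $DB=B^tD$, $BA=AB^t$, together with $A=A^t$, $D=D^t$, $A>0$, and $D>0$. Positivity of $A$ follows from writing $A=Y+XY^{-1}X$ as a sum of a positive matrix and a positive semidefinite matrix $(Y^{-1/2}X)^t(Y^{-1/2}X)$; positivity of $D$ is immediate. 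Substituting $D=Y^{-1}$ and $B^t=-Y^{-1}X$ back into $-D^{-1}B^t+iD^{-1}$ returns $X+iY=Z$, confirming that the two constructions are mutual inverses.

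The only mildly delicate point is keeping track of transposes and block orientations to ensure that the map $\Gamma\mapsto Z$ and its inverse $Z\mapsto\Gamma$ genuinely land in the correct spaces (symmetric positive block matrices satisfying the Riemann relations on one side, and normalized period matrices in the Siegel upper half-space on the other). Once that bookkeeping is done, the equivalence is essentially a one-line pair of formulas, and the theorem follows. Composing with Torelli's theorem then completes the proof of Theorem \ref{secondmain}, and hence of Theorem \ref{thm_sequences}.
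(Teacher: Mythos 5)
Your proposal is correct and follows essentially the same route as the paper: the forward direction is exactly the formula $Z=-D^{-1}B^t+iD^{-1}$ of \eqref{npm}, and the reverse direction is the paper's block formula $\Gamma=\twomatrix{Y+XY^{-1}X}{-XY^{-1}}{-Y^{-1}X}{Y^{-1}}$, with the Riemann relations $AD-B^2=I$, $DB=B^tD$ doing the work in both. Your explicit verification that the two maps are mutual inverses and land in the right spaces is just the bookkeeping the paper leaves implicit ("using Riemann's period relations one sees"), so no new ideas are needed.
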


Therefore, by Torelli's theorem, if two Riemann surfaces have the same Gram matrix (with respect to a common canonical homology basis) then they are conformal. The proof of Theorem \ref{secondmain} is now complete.

\subsection{A simple proof for tori}
In the case of genus one surfaces, i.e., for tori, it is quite elementary to follow the lines of the proof of Theorem \ref{mainthm}. Indeed, the fact that two tori with the same ground state spectrum have the same volume and belong to the same conformal class follows by simple computations, which in turn allow a better understanding of the proof of Theorem \ref{mainthm}.

\medskip
Let $T$ be a genus one orientable surface, i.e., a torus. Consider $(T,h)$ where $h$ is a Riemannian metric on $T$. We will show how the Gram matrix $\Gamma=\Gamma(h)$ determines the conformal class of $h$. Recall that the Gram matrix is a conformal invariant. Let then $(T,\hat h)$ be the unique (up to homotheties) flat torus conformal to $(T,h)$. We can identify it with the square torus $\real 2/\mathbb Z^2$ endowed with the metric $\begin{pmatrix}a & b\\b & c\end{pmatrix}$, with $a,c>0$ and $ac-b^2>0$. We consider now the canonical homology basis $(\chi_1,\chi_2)$ corresponding to $e_1=(1,0)$ and $e_2=(0,1)$. The dual basis of harmonic forms is then  $(\alpha_1,\alpha_2)$,  with $\alpha_1=dx$, $\alpha_2=dy$. We can now compute the Gram matrix
$$
\Gamma=\begin{pmatrix}\|\alpha_1\|_{L^2}^2&\langle \alpha_1,\alpha_2\rangle_{L^2}\\
\langle \alpha_1,\alpha_2\rangle_{L^2} & \|\alpha_2\|_{L^2}^2 \end{pmatrix}=\frac{1}{\sqrt{ac-b^2}}\begin{pmatrix}c & {-b}\\-b&a\end{pmatrix}=\begin{pmatrix} A & B\\ B & D \end{pmatrix},
$$
where $A=\frac{c}{\sqrt{ac-b^2}}$, $B=\frac{-b}{\sqrt{ac-b^2}}$, $D=\frac{a}{\sqrt{ac-b^2}}$. Here we are using the same notation of \eqref{gram} .
As we have said, this is also the Gram matrix of $(T,h)$ (with respect to the same basis of harmonic $1$-forms), since $h$ and $\hat h$ are conformal.

\medskip

We observe immediately that ${\rm det}(\Gamma)=1$. It is trivial to see that if we know $\Gamma$, then we know $\hat h$ up to homotheties, and hence the conformal class of $(T,h)$. 

\medskip

We now calculate the normalized period matrix of $(T,h)$ for the genus $1$ case. 

\medskip

Given the canonical homology basis $(\chi_1,\chi_2)$, the intersection matrix $J$ is $J=\begin{pmatrix}0 & 1\\ {-1}& 0\end{pmatrix}$. Next we have to compute the matrix $G$ of $\star$ with respect to the basis $(\alpha_1,\alpha_2)$ of harmonic $1$-forms. To do so we compute
$$
\star \alpha_1=\frac{bdx+cdy}{\sqrt{ac-b^2}}\,,\ \ \ \star \alpha_2=\frac{-adx-bdy}{\sqrt{ac-b^2}}
$$
and hence 
$$
G=\frac{1}{\sqrt{ac-b^2}}\begin{pmatrix}b & c\\ -a & -b\end{pmatrix}=\begin{pmatrix}-B & A\\ -D & B\end{pmatrix},
$$
see \eqref{Gmatrix}.
We immediately see the relation $G=\Gamma J$, and the Riemann relations, again, become trivial.

\medskip We take now any holomorphic differential, for example, $\alpha=\alpha_1+i\star \alpha_1=dx+i\frac{bdx+cdy}{\sqrt{ac-b^2}}$, and compute its period matrix with respect to the fixed canonical homology basis $\chi=(\chi_1,\chi_2)$:
$$
(\chi,w)=\left(1+i\frac{b}{\sqrt{ac-b^2}},i\frac{c}{\sqrt{ac-b^2}}\right)=(1-iB,iA)\in \mathbb C^2.
$$
We know that there is a unique holomorphic differential $\zeta$ such that $(\chi,\zeta)=(1,X+iY)$ where $X,Y\in\mathbb R$, $Y>0$. We can compute it explicitly. In fact,
$$
(\chi,w)=\left(1+i\frac{b}{\sqrt{ac-b^2}},i\frac{c}{\sqrt{ac-b^2}}\right)=\eta\left(1,\frac{b}{a}+i\frac{\sqrt{ac-b^2}}{a}\right)=\eta\left(1,\frac{-B+i}{D}\right)\,,\ \ \ \eta=1+i\frac{b}{\sqrt{ac-b^2}}=1-iB.
$$
So the normalized period matrix is
$$
\left(1,\frac{b}{a}+i\frac{\sqrt{ac-b^2}}{a}\right)=\left(1,\frac{-B+i}{D}\right).
$$
and $\zeta=\eta w$. 

\medskip

Again it is straightforward to see that the conformal class $[h]$ determines the Gram matrix $\Gamma$, which in turn determines the normalized period matrix, and vice-versa. Note that we have used the uniformization theorem, which states that in a conformal class of metrics on $\Sigma$ there is a unique flat metric (up to homotheties).

\medskip

Now we go back to ground state isospectrality. If two metrics $h_1,h_2$ on $\Sigma$ have the same ground state spectrum, then
$$
\frac{\Gamma(h_1)}{|h_1|}=\frac{\Gamma(h_2)}{|h_2|}
$$
but since the Gram matrices have determinant $1$, this implies that $|h_1|=|h_2|$ hence the two metrics have the same volume and hence the same Gram matrices. Therefore they are conformal.

\section*{Acknowledgments}

The first author acknowledges support of the SNSF project “Geometric Spectral Theory”, grant
number 200020\_21257. The second  author acknowledges support of the project ``Perturbation problems and asymptotics for elliptic differential equations: variational and potential theoretic methods'' funded by the European Union – Next Generation EU and by MUR-PRIN-2022SENJZ. The first and second authors are members of the Gruppo Nazionale per le Strutture Algebriche, Geometriche e le loro Applicazioni (GNSAGA) of the Istituto Nazionale di Alta Matematica (INdAM).

\bibliographystyle{abbrv}
\bibliography{biblioCPSTori.bib}
\end{document}